\newcommand{\RR}{\mathbb R}
\newcommand{\R}{\mathbb{R}}
\newcommand{\N}{\mathbb{N}}
\newcommand{\pO}{\partial\Omega}
\newcommand{\pa}{\partial}
\newcommand{\eps}{\varepsilon}
\newcommand{\var}{\varepsilon}
\newcommand{\supp}{{\rm supp} \, }
\newcommand{\wto}{\rightharpoonup}
\newcommand{\io}{\int_\Omega}
\newcommand{\bom}{\overline{\Omega}}
\newcommand{\nn}{\nonumber}
\newcommand{\dd}{\, \mathrm{d}}
\definecolor{cadmiumgreen}{rgb}{0.0, 0.42, 0.24}
\newcommand{\kg}{k}
\newcommand{\tge}{\gamma_\eta}
\newcommand{\IA}{\mathcal{A}^{-1}}
\newtheorem{theorem}{Theorem}[section]
\newtheorem{lemma}[theorem]{Lemma}
\newtheorem{proposition}[theorem]{Proposition}
\newtheorem{definition}[theorem]{Definition}
\newtheorem{remark}[theorem]{Remark}
\newcommand{\be}{\begin{equation} \label}
\newcommand{\ee}{\end{equation}}
\newcommand{\bea}{\begin{eqnarray}\label}
\newcommand{\eea}{\end{eqnarray}}
\newcommand{\bas}{\begin{eqnarray*}}
\newcommand{\eas}{\end{eqnarray*}}
\numberwithin{equation}{section}
\begin{document}
\title{Weak solutions to triangular cross diffusion systems modeling chemotaxis with local sensing} 

\author{
Laurent Desvillettes\footnote{desvillettes@math.univ-paris-diderot.fr}\\
{\small  Universit\'e de Paris, IUF and Sorbonne Universit\'e, CNRS, IMJ-PRG,}\\
{\small F-75013 Paris, France}
\and
Philippe Lauren\c{c}ot \footnote{laurenco@math.univ-toulouse.fr}\\
{\small  Institut de Math\'ematiques de Toulouse, UMR~5219, Universit\'e de Toulouse, CNRS,}\\
{\small F--31062 Toulouse Cedex 9, France}
\and
Ariane Trescases\footnote{ariane.trescases@math.univ-toulouse.fr}\\
{\small  Institut de Math\'ematiques de Toulouse, UMR~5219, Universit\'e de Toulouse, CNRS,}\\
{\small F--31062 Toulouse Cedex 9, France}
\and
Michael Winkler\footnote{michael.winkler@math.uni-paderborn.de}\\
{\small Institut f\"ur Mathematik, Universit\"at Paderborn,}\\
{\small 33098 Paderborn, Germany} 
}

%\keywords{}
%\subjclass{}

\date{\today}
\maketitle

%%%%%%%%%%%%%%%%
%%%%%%%%%%%%%%%%
\begin{abstract}
New estimates and global existence results are provided for a class of systems of cross diffusion equations arising from the modeling of chemotaxis with local sensing, possibly featuring a growth term of logistic-type as well. For sublinear non-increasing motility functions, convergence to the spatially homogeneous steady state is shown, a dedicated Liapunov functional being constructed for that purpose.
\end{abstract}
%%%%%%%%%%%%%%%%
%%%%%%%%%%%%%%%%

%\tableofcontents
%
%     HEADLINES
%
\pagestyle{myheadings}
\markboth{\sc{L.~Desvillettes, Ph.~Lauren\c cot, A.~Trescases, M.~Winkler}}{\sc{Triangular cross diffusion systems modeling chemotaxis with local sensing}}
%
%%%%%%%%%%%%%%%%
%%%%%%%%%%%%%%%%
\section{Introduction}
%%%%%%%%%%%%%%%%
%%%%%%%%%%%%%%%%

We consider a class of systems of two parabolic equations in which the first equation is a cross diffusion equation (that is, the diffusion rate in this equation depends on the solution of the second equation), while the second equation is a standard heat equation coupled to the first one only through its source term. Such systems are sometimes called triangular cross diffusion systems.
%\par
We focus on the systems introduced in \cite{DKTY2019} to treat specific situations arising in the theory of chemotaxis. The quantity $u:=u(t,x) \ge 0$ is then the density of cell and $v:=v(t,x) \ge 0$ is the concentration of chemoattractant. We refer to 
\cite{DKTY2019} for a discussion of the modeling assumptions underlying such systems. Let us just say that with respect to general systems appearing in
the modeling of chemotaxis, where the dynamics of the density of cells is driven by the evolution equation $\partial_t u = \mathrm{div} ( \nabla F +  G)$, where $F, G$ both depend on $u$ and $v$, the specificity of the system considered here is that it can be written in a form where $G=0$; that is,  

\begin{subequations}\label{ivp}
	\begin{align}
		\partial_t u - \Delta (u \gamma(v)) = 0\,, \qquad &(t,x)\in (0,\infty)\times\Omega\,, \label{ivp1} \\
		\varepsilon \partial_t v - \Delta v= u - v\,, \qquad &(t,x)\in (0,\infty)\times\Omega\,, \label{ivp2} \\	
		\nabla (u \gamma(v)) \cdot \mathbf{n}  = \nabla v \cdot \mathbf{n} = 0\,, \qquad &(t,x)\in (0,\infty)\times\partial\Omega\,, \label{ivp3} \\
		(u,v)(0, \cdot) = (u^{in},v^{in})\,, \qquad &x\in \Omega\,, \label{ivp4}
\end{align}
\end{subequations}
where $\Omega$ is a smooth bounded domain of $\RR^N$, with $N\ge2$, and $\varepsilon>0$. Here, $\bf n$ is the outward unit normal vector at a point of $\partial\Omega$. The initial data $u^{in}$ and $v^{in}$ are given and nonnegative. Typical functions $\gamma$ are assumed to be bounded and strictly positive on $[0, \infty)$, and to decay at infinity (that is, when $v \to \infty$) typically like a power. In other words, they generalize the prototype (which makes sense from the point of view of modeling) given by
\begin{equation}\label{proto}
	\gamma(z)=(z+1)^{-\kg}, 
	\qquad z>0,
\end{equation}
for some $\kg>0$, but are not always assumed to be monotone decreasing.

\medskip

We also consider the counterpart of this system when the cell population has a logistic-type growth, that is,
\begin{subequations}\label{02}
\begin{align}
	\partial_t u- \Delta (u\gamma(v)) = u \, h(u),
	\qquad &(t,x)\in (0,\infty)\times\Omega, \\
	\varepsilon \partial_t v=\Delta v-v+u,
	\qquad &(t,x)\in (0,\infty)\times\Omega, \\
	\nabla (u \gamma(v)) \cdot \mathbf{n}  = \nabla v \cdot \mathbf{n} =0,
	\qquad & (t,x)\in (0,\infty)\times\partial\Omega, \\
	(u,v)(0, \cdot) = (u^{in},v^{in})\,,
	\quad & x\in\Omega,
\end{align}
\end{subequations}
where $h$ is a continuous function. It can indeed be interesting to take into account cells' division, as well as their death due to the lack of resources. 

%%%%%%%%%%%%%%%%
\subsection{Notation}
%%%%%%%%%%%%%%%%

We will sometimes denote the spaces $L^p(\Omega)$, $H^1(\Omega)$, and $(H^1(\Omega))'$ by  $L^p$, $H^1$, and $(H^1)'$ , respectively (with $p\in[1,\infty]$). Furthermore, for $w\in L^p$, we denote the $L^p$ norm of $w$ by $\|w\|_p$.

\medskip

Given $w\in (H^1)'(\Omega)$, we define $\langle w \rangle$ by
\begin{equation*}
	\langle w \rangle := \frac{1}{|\Omega|} \langle w, 1 \rangle_{(H^1)',H^1}
\end{equation*}
and note that
\begin{equation*}
	\langle w \rangle = \frac{1}{|\Omega|} \int_\Omega w(x)\ \mathrm{d}x\, {\hbox{ when }} \qquad w\in (H^1)'(\Omega) \cap L^1(\Omega)\,.
\end{equation*}
 For $w\in (H^1)'(\Omega)$ such that $\langle w \rangle =0$, we introduce $\mathcal{K}w\in H^1(\Omega)$ as the unique (variational) solution to
 \begin{equation}
 		-\Delta (\mathcal{K}w) = w \;\;\text{ in }\;\; \Omega\,, \qquad \nabla (\mathcal{K}w) \cdot \mathbf{n} = 0 \;\;\text{ on }\;\; \partial\Omega\,, \qquad \langle \mathcal{K}w \rangle = 0\,. \label{opk}
 \end{equation}
The operator $\mathcal{K}$ plays a significant role in the analysis of our system, in particular in view of the specific form of the cross-diffusion in \eqref{ivp1}. Indeed, for a solution $(u,v)$ regular enough, one expects that the conservation of mass holds for $u$, that is, $\langle u \rangle = \langle u^{in} \rangle$, and that consequently \eqref{ivp1} can be rewritten as
\begin{align}
		\partial_t \mathcal{K} \left( u - \langle u^{in} \rangle \right) = \langle u \gamma(v) \rangle - u \gamma(v) \,, \qquad &(t,x)\in (0,\infty)\times\Omega\,. \label{ivp1K}
\end{align}
For this reason, we choose the following norm on $(H^1)'(\Omega)$:
  \begin{equation}
 		w\in (H^1)'(\Omega) \mapsto  \|w\|_{(H^1)'} := \| \nabla \mathcal{K}(w-\langle w \rangle)\|_2\,. \label{H1pnorm}
 \end{equation}
 Still for $w\in (H^1)'(\Omega)$, not necessarily with zero average, we also define $\IA w \in H^1(\Omega)$ as the unique (variational) solution to
  \begin{equation}
 		-\Delta \IA w + \IA w  = w \;\;\text{ in }\;\; \Omega\,, \qquad \nabla \IA w\cdot \mathbf{n} = 0 \;\;\text{ on }\;\; \partial\Omega \,. \label{opa}
 \end{equation}
Clearly, $\IA$ is the extension to $(H^1)'(\Omega)$ of the inverse of the unbounded linear operator $\mathcal{A}$ on $L^2(\Omega)$ with domain
\begin{equation} 
	\begin{split}
		D(\mathcal{A}) & := \{ z \in H^2(\Omega)\ :\ \nabla z \cdot \mathbf{n} = 0  \;\;\text{ on }\;\; \partial\Omega\}\,, \\
		\mathcal{A}z & := - \Delta z + z \;\;\text{ for }\;\; z\in D(\mathcal{A})\,.
	\end{split}\label{OpA}
\end{equation}
We note that the following norm: 
 \begin{equation} w\in (H^1)'(\Omega) \mapsto \| \nabla \IA w \|_2 + \| \IA w -\langle  \IA w \rangle \|_2 = \| \nabla \IA w \|_2 + \| \IA w -\langle  w \rangle \|_2\,, \label{H1pnormA}
 \end{equation}
is equivalent to the $(H^1)'(\Omega)$-norm defined in \eqref{H1pnorm}.
 
 %%%%%%%%%%%%%%%%
\subsection{Main results}
%%%%%%%%%%%%%%%%

We first propose a definition of very weak solutions associated with  problem~\eqref{ivp}:
\begin{definition}\label{dw2}
Let  $\Omega$ be a smooth bounded domain of $\RR^N$, with $N\ge2$, $\varepsilon>0$,
 and $\gamma \in C ([0,\infty);(0,\infty))$. Suppose that $u^{in}\in L^1(\Omega)$ and $v^{in}\in L^1(\Omega)$ are nonnegative, and that, for all $T>0$,
  \be{w12}
	u\in L^1((0,T)\times\Omega) \qquad \text{and} \qquad v\in L^1((0,T)\times\Omega),
\ee
  are such that $u\ge 0$ and $v>0$ a.e.~in $(0,\infty)\times\Omega$, and that, for all $T>0$,
  \be{w22}
	u\gamma(v) \in L^1((0,T)\times\Omega).
  \ee
  Then $(u,v)$ is called a {\em global very weak solution} of (\ref{ivp}) if
  \bea{wu2}
	- \int_0^\infty \io u\partial_t \varphi \dd x \dd t - \io u^{in} \varphi(0) \dd x
	= \int_0^\infty \io u\gamma(v)\Delta\varphi \dd x \dd t, \nn
  \eea
  and
  \bea{wv2}
	- \int_0^\infty \io \varepsilon v\partial_t\varphi \dd x \dd t - \io \varepsilon v^{in}\varphi(0) \dd x 
	= \int_0^\infty \io  v\, (\Delta\varphi - \varphi) \dd x \dd t 
	+ \int_0^\infty \io u\varphi \dd x \dd t, \nn
  \eea
hold for any $\varphi\in C_0^\infty([0,\infty)\times\bom)$ such that $\nabla\varphi\cdot\mathbf{n}=0$ on $(0,\infty)\times\pO$.
\end{definition}

We first show that an algebraic growth on $1/\gamma$ at infinity is sufficient to obtain the existence of a global very weak solution to the system~\eqref{ivp} without imposing any smoothness assumption on $\gamma$.

\begin{theorem}\label{theo26}
  Let $N\ge 2$ and $\Omega\subset\R^N$ be a bounded domain with smooth boundary,
 and assume that $\varepsilon>0$, that $\gamma$ is continuous and bounded on $[0,\infty)$, and that for some $K_1>0$ and $\kg\ge 0$,
\begin{equation}\label{phi1}
	\frac{1}{\gamma(z)} \le K_1 \, (z+1)^{\kg},
	\qquad z > 0.
 \end{equation}
\par

  Then for any choice of $(u^{in},v^{in})$ such that
  \be{init}
	\left\{ \begin{array}{l}
	u^{in} \in L^{p_0}(\Omega), \ p_0>N/2\,,
	\mbox{ is nonnegative and} \\[1mm]
	v^{in}\in L^\infty(\Omega) \mbox{ is nonnegative,}
	\end{array} \right.
  \ee
  there exists a global very weak solution $(u,v)$ of (\ref{ivp}) in the sense of Definition~\ref{dw2}. Furthermore, for all $q\in (1,\infty)$, $p\in (1,2)$, and $T>0$,
  \begin{align*} u &\in L^\infty((0,\infty);L^1(\Omega))\cap L^p((0,T)\times\Omega) \cap L^\infty((0,T);(H^1(\Omega))')  \, , \\
  v &\in L^\infty((0,\infty);L^2(\Omega)) \cap L^q((0,T)\times\Omega) \cap L^{2}((0,T);H^1(\Omega))\,,  \\
  u \sqrt{\gamma(v)} &\in L^2((0,T)\times\Omega)  \, .
  \end{align*}

Furthermore, if $v^{in}\in W^{1,q}(\Omega)$ for some $q\in (1,2)$, then $v\in L^\infty((0,T);W^{1,q}(\Omega))$ for all $T>0$.
\end{theorem}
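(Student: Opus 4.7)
The plan is a regularization-plus-compactness argument whose backbone is the $\mathcal{K}$-reformulation \eqref{ivp1K}. I first approximate $\gamma$ by a sequence $\gamma_n\in C^\infty([0,\infty);(0,\infty))$ converging locally uniformly to $\gamma$, uniformly bounded by $\|\gamma\|_\infty$, and satisfying \eqref{phi1} with a constant uniform in $n$; I likewise mollify $u^{in}$ into smooth nonnegative $u_n^{in}$ sharing its mass and its $L^{p_0}$-bound. Standard parabolic theory (or the framework of \cite{DKTY2019}) then yields a global classical solution $(u_n,v_n)$ of the approximating system, for which all the computations below are rigorous.

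The key a priori estimate comes from testing the identity
\begin{equation*}
\partial_t \mathcal{K}\bigl(u_n-\langle u^{in}\rangle\bigr) = \langle u_n\gamma_n(v_n)\rangle - u_n\gamma_n(v_n),
\end{equation*}
which rests on mass conservation $\int u_n(t)=\int u^{in}$, against the mean-zero function $u_n-\langle u^{in}\rangle$. Self-adjointness of $\mathcal{K}$ on mean-zero functions gives
\begin{equation*}
\tfrac12\tfrac{d}{dt}\bigl\|u_n-\langle u^{in}\rangle\bigr\|_{(H^1)'}^2 + \int_\Omega u_n^2\gamma_n(v_n)\dd x = \langle u^{in}\rangle\int_\Omega u_n\gamma_n(v_n)\dd x \leq \|\gamma\|_\infty|\Omega|\,\langle u^{in}\rangle^2,
\end{equation*}
and hence, uniformly in $n$, $u_n\in L^\infty(0,\infty;(H^1)')$ and $u_n\sqrt{\gamma_n(v_n)}\in L^2((0,T)\times\Omega)$. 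Testing the $v_n$-equation by $v_n$ and bounding $\int u_n v_n\leq \tfrac12\|u_n\|_{(H^1)'}^2+\tfrac12\|v_n\|_{H^1}^2$ by duality and Young, one absorbs the bad term into the dissipation to obtain $v_n\in L^\infty(0,\infty;L^2)\cap L^2(0,T;H^1)$.

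A bootstrap built on \eqref{phi1} then promotes these bounds. By Hölder with conjugate exponents $2/p$ and $2/(2-p)$,
\begin{equation*}
\int u_n^p\dd x \leq \Bigl(\int u_n^2\gamma_n(v_n)\dd x\Bigr)^{p/2}\Bigl(\int\gamma_n(v_n)^{-p/(2-p)}\dd x\Bigr)^{(2-p)/2} \leq C\Bigl(\int(v_n+1)^{kp/(2-p)}\dd x\Bigr)^{(2-p)/2},
\end{equation*}
so $L^q$-control of $v_n$ yields $L^p$-control of $u_n$; conversely, $L^p$-control of the source $u_n$ improves the integrability of $v_n$ via $L^p$-maximal regularity for the Neumann problem $\varepsilon\partial_t v_n-\Delta v_n+v_n=u_n$. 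Starting from $v_n\in L^{2+4/N}((0,T)\times\Omega)$ by parabolic Sobolev interpolation between $L^\infty(L^2)$ and $L^2(H^1)$ and iterating this pair of inequalities delivers $v_n\in L^q$ for every $q<\infty$ and $u_n\in L^p$ for every $p<2$. If $v^{in}\in W^{1,q}$, one further application of $L^q$-maximal regularity with this datum and the now-available $L^q$-source gives $v_n\in L^\infty(0,T;W^{1,q})$.

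For the passage to the limit, $u_n\gamma_n(v_n)$ is uniformly bounded in $L^2((0,T)\times\Omega)$ (since $\gamma_n\leq\|\gamma\|_\infty$ and $u_n\sqrt{\gamma_n(v_n)}\in L^2$), so $\partial_t u_n=\Delta(u_n\gamma_n(v_n))$ is uniformly bounded in a negative-Sobolev space on test functions satisfying the Neumann condition. Combined with the $(H^1)'$-bound on $u_n$, Aubin-Lions yields strong convergence $u_n\to u$ in $L^2(0,T;(H^1)')$ and along a subsequence a.e.; the analogous compactness on the linear $v_n$-equation produces strong $L^r$-convergence of $v_n$. Together with the locally uniform convergence $\gamma_n\to\gamma$ and the equi-integrability of $u_n\gamma_n(v_n)$ (supplied by its $L^2$-bound), one identifies the product limit and passes to the limit in the weak formulation. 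The main obstacle is the bootstrap: the iteration must close with enough spare integrability to guarantee equi-integrability of $u_n\gamma_n(v_n)$ and to identify its limit as $u\gamma(v)$, and it is precisely here that the polynomial growth bound \eqref{phi1} is essential, since any super-polynomial blow-up of $1/\gamma$ would break the Hölder step.
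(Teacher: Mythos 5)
Your regularization and basic energy estimates are sound and essentially parallel the paper's Lemma~\ref{lem.a21} (your test of \eqref{ivp1K} against $u_n-\langle u^{in}\rangle$ is the same computation as the paper's, phrased with $\mathcal{K}$ instead of $\mathcal{A}^{-1}$). The genuine gap is the step you yourself flag as the ``main obstacle'': the bootstrap between the weighted bound $\int u_n^2\gamma_n(v_n)\,\dd x\dd t\le C$ and parabolic maximal regularity does not close for general $\kg$. Quantitatively: if $v_n\in L^q((0,T)\times\Omega)$, your H\"older step gives $u_n\in L^p$ only for $p=2q/(q+\kg)$ (which is not even $>1$ unless $q>\kg$), and maximal regularity plus the parabolic Sobolev embedding then returns $v_n\in L^{q'}$ with $1/q'=1/2+\kg/(2q)-2/(N+2)$. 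One checks that $q'>q$ requires $(N-2)/(2(N+2))<(2-\kg)/(2q)$; hence for $N\ge 3$ and $\kg\ge 2$ (and already for $\kg=2$, $N=2$) there is no gain at all, and even for $\kg<2$, $N\ge 3$ the iteration saturates at the finite exponent $q_\infty=(2-\kg)(N+2)/(N-2)$, far short of the claimed $v\in L^q$ for \emph{all} $q<\infty$ and $u\in L^p$ for all $p<2$ under the mere polynomial bound \eqref{phi1} with arbitrary $\kg\ge 0$. A telltale sign is that your argument never uses the hypothesis $p_0>N/2$. The paper's route at this exact point is different and is the key idea you are missing: following Fujie--Jiang, one sets $w_\eta=\mathcal{A}^{-1}u_\eta$ and exploits the structure $\partial_t u_\eta=\Delta(u_\eta\gamma_\eta(v_\eta))$ together with $u_\eta\gamma_\eta(v_\eta)\le(1+K_0)u_\eta$ and the elliptic comparison principle to get the \emph{pointwise} bound $w_\eta(t)\le e^{(1+K_0)t}w_\eta(0)$, with $\|w_\eta(0)\|_\infty\le C\|u_\eta^{in}\|_{p_0}$ precisely because $p_0>N/2$ gives $W^{2,p_0}\hookrightarrow L^\infty$; then $\mathcal{A}^{-1}v_\eta$ solves a heat equation with a bounded source, and maximal regularity yields $v_\eta\in L^q((0,T)\times\Omega)$ for \emph{every} $q<\infty$, independently of $\kg$. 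Only after that does the H\"older step (your same computation) give $u_\eta\in L^p$ for all $p<2$.

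Two secondary points. First, your approximation of $\gamma$ by mollification alone does not provide a positive lower bound on the motility uniformly in $v$ (only locally), whereas global solvability of the approximate problems via \cite{TaWi2017} requires a motility bounded below by a positive constant; the paper adds $+\eta$ to the mollified $\gamma$ for exactly this reason, so ``standard parabolic theory'' is not enough as stated. Second, strong convergence of $u_n$ in $L^2((0,T);(H^1)')$ does not give a.e.\ convergence of $u_n$; this is harmless, since the product $u_n\gamma_n(v_n)$ can be identified (as in the paper) by pairing the weak $L^p$ convergence of $u_n$ with the strong/a.e.\ convergence of $\gamma_n(v_n)$, but the a.e.\ claim should be dropped.
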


Observe that, since $N\ge 2$, one has $N/2\ge 2N/(N+2)$, so that $L^{p_0}(\Omega)$ is continuously embedded in $(H^1)'(\Omega)$ and it follows from \eqref{init} that 
\begin{equation}
	u^{in}\in (H^1)'(\Omega). \label{initb}
\end{equation}

When $\gamma\in C^3([0,\infty))\cap L^\infty(0,\infty)$, existence of weak solutions to~\eqref{ivp} is shown in \cite{LiJi2020} when $\varepsilon$ is sufficiently small, namely, $\varepsilon \|\gamma\|_{L^\infty(0,\infty)}<1$. \Cref{theo26} relaxes this condition at the expense of the algebraic growth condition~\eqref{phi1}. Let us recall that existence of weak solutions is also obtained in \cite{TaWi2017} when $\min\{\gamma\}>0$ and in \cite{DKTY2019,YoKi2017} when $\gamma(z)=1/(c+z^k)$ for $c\ge 0$ and $k>0$ sufficiently small, these conditions being removed in \Cref{theo26} as well. The constraint on $\varepsilon$ required in~\cite{LiJi2020} and the algebraic growth~\eqref{phi1} assumed in \Cref{theo26} are likely to be of a technical nature, as global existence of weak solutions is proved in \cite{BLT2020} in the particular case $\gamma(z)=e^{-z}$. As for classical solutions, well-posedness in that setting is shown in \cite{FuJi2020b, FuJi2020c, FuSe2021, JLZ2022, JW2020, LyWa2021}, provided that $\gamma'\le 0$ with $\gamma(z)\to 0$ as $z\to\infty$.

\begin{remark} 
If $v^{in}$ is bounded below by a strictly positive constant, then it is easy to see that the same will remain true for $v(t, \cdot)$, with a constant that decays exponentially fast with time. Then the behavior of $\gamma$ at zero is irrelevant and one can relax the assumption that $\gamma$ is continuous on $[0,\infty)$ and replace it with continuity on $(0, \infty)$ only. This is true also for the next theorems of existence.
\end{remark}

\begin{remark}
Since $u$ lies in $L^p((0,T)\times\Omega)$ for $p\in (1,2)$, $v$ is actually a strong solution to \eqref{ivp2}: each term lies in $L^p((0,T)\times\Omega)$ for all $T>0$ and the equation holds almost everywhere on $(0,\infty)\times\Omega$. One can furthermore show that the formula \eqref{ivp1K} holds in a strong sense: thanks to the boundedness of $\gamma$, each term lies in $L^2((0,T)\times\Omega)$ for all $T>0$ and the equation holds almost everywhere on $(0,\infty)\times\Omega$. 
\end{remark}

We next turn to the large time behavior of solutions to~\eqref{ivp}. While a complete description of the dynamics for an arbitrary motility function $\gamma$ seems to be out of reach, it is shown in \cite{AhnYoon2019} that, when $\varepsilon=0$ and $\gamma(z)=z^{-k}$ for some $k\in (0,1]$, solutions converge to spatially homogeneous steady states. A key ingredient in their proof is  the construction of a Lyapunov functional but this property breaks down when $\varepsilon>0$. Nevertheless, we are able to prove that the system~\eqref{ivp} admits a Lyapunov functional, which is different from that constructed in~\cite{AhnYoon2019} but applies to the same call of motilities, and requires actually extra conditions on the monotonicity of $\gamma$, as described now.

\begin{theorem}\label{thm.lt}
Let $\Omega$ be a smooth bounded domain of $\RR^N$, with $N\ge 2$. Assume that $\varepsilon>0$, that  $\gamma\in C([0,\infty))\cap C^3((0,\infty))$  is positive, and that
\begin{equation}\label{mon}
\gamma' \le 0, \qquad  (z \mapsto z\,\gamma(z))' \ge 0\,. 
\end{equation}
Consider nonnegative initial conditions $(u^{in},v^{in})\in W^{1,r}(\Omega;\mathbb{R}^2)$ for some $r>N$ and denote the corresponding  global classical solution to~\eqref{ivp} by $(u,v)$ \cite{JLZ2022, FuSe2021}. Setting $m := \langle u^{in}\rangle$, we define $G_0\in C^1([0,\infty))\cap C^4((0,\infty))$ by 
	\begin{equation}
	G_0'(z) := 2 z \gamma(z) - m \gamma(z) - m\gamma(m)\,, \quad z\ge 0\,, \qquad G_0(m)=0\,. \label{G0}
	\end{equation}
	Then $G_0$ is nonnegative and convex on $(0,\infty)$, and 
	\begin{equation}
	\frac{\dd}{\dd t} \mathcal{L}_0(u(t),v(t)) + \mathcal{D}_0(u(t),v(t)) = 0\,, \qquad t>0\,, \label{lf0} 
	\end{equation}
	where
	\begin{equation}
	\mathcal{L}_0(u,v) := \frac{1}{2} \|\nabla\mathcal{K}(u-m)\|_2^2 + \varepsilon \int_\Omega G_0(v)\ \mathrm{d}x \ge 0, \label{L0}
	\end{equation}
	and
	\begin{equation}
	\begin{split}
	\mathcal{D}_0(u,v) & := \int_\Omega G_0''(v) |\nabla v|^2\ \mathrm{d}x + \int_\Omega (u-v)^2 \gamma(v)\ \mathrm{d}x \\
	& \qquad + \int_\Omega (v-m)(v\gamma(v)-m\gamma(m))\ \mathrm{d}x \ge 0\,.
	\end{split} \label{D0}
	\end{equation}
Moreover, 
\begin{equation}
	\sup_{t\ge 0} \left\{ \mathcal{L}_0(u(t),v(t)) + \|v(t)\|_{H^1}^2 \right\} + \int_0^\infty \left[ \mathcal{D}_0(u(s),v(s)) + \varepsilon \|\partial_t v(s)\|_2^2 \right]\ \mathrm{d}s < \infty\,, \label{lt.00}
\end{equation}
and
\begin{equation*}
	\lim_{t\to\infty} \left\{ \|\mathcal{K}(u(t) - m)\|_2 + \|v(t) - m\|_2 \right\} = 0\,.
\end{equation*}
\end{theorem}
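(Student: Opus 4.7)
The plan is to carry out four steps corresponding to (i) the convexity and nonnegativity of $G_0$, (ii) the dissipation identity \eqref{lf0}, (iii) the uniform-in-time bounds \eqref{lt.00}, and (iv) the convergence to the homogeneous equilibrium.

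For step (i), direct differentiation of \eqref{G0} yields $G_0''(z) = 2(z\gamma(z))' - m\gamma'(z)$, which is nonnegative on $(0,\infty)$ by both parts of~\eqref{mon}, proving convexity. Since $G_0'(m) = 2m\gamma(m) - m\gamma(m) - m\gamma(m) = 0$, the point $m$ is a global minimum of $G_0$, and $G_0(m)=0$ gives $G_0\ge 0$. The regularity claim follows from $\gamma \in C^3$.

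For step (ii), I differentiate $\mathcal{L}_0$ term by term. Mass conservation yields $\langle u(t)\rangle=m$, so \eqref{ivp1K} holds in a strong sense for the classical solution, and integration by parts using the Neumann condition gives
\[\frac{1}{2}\frac{\dd}{\dd t}\|\nabla \mathcal{K}(u-m)\|_2^2 = -\io (u-m)\, u\gamma(v)\,\dd x.\]
Testing \eqref{ivp2} against $G_0'(v)$ and integrating by parts yields
\[\varepsilon \frac{\dd}{\dd t}\io G_0(v)\,\dd x = -\io G_0''(v)|\nabla v|^2\,\dd x + \io G_0'(v)(u-v)\,\dd x.\]
The central algebraic step is to sum the two contributions, substitute $G_0'(v) = 2v\gamma(v) - m\gamma(v) - m\gamma(m)$, and use $\io(u-v)\,\dd x = (m-\langle v\rangle)|\Omega|$; the square-completion $-u^2+2uv-2v^2 = -(u-v)^2-v^2$ then collapses everything into the three nonnegative integrands that define $-\mathcal{D}_0$. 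Nonnegativity of $\mathcal{D}_0$ follows from convexity of $G_0$, positivity of $\gamma$, and the monotonicity $(z\gamma(z))'\ge 0$.

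For step (iii), the identity \eqref{lf0} at once gives $\mathcal{L}_0(u(t),v(t))\le \mathcal{L}_0(u^{in},v^{in})$ and $\int_0^\infty \mathcal{D}_0\,\dd s \le \mathcal{L}_0(u^{in},v^{in})$. For the remaining bounds, I test \eqref{ivp2} against $\partial_t v$ and use $\io u\partial_t v = \io(u-v)\partial_t v + \tfrac12 \tfrac{\dd}{\dd t}\|v\|_2^2$ together with Young's inequality to arrive at
\[\frac{\varepsilon}{2}\|\partial_t v\|_2^2 + \frac{1}{2}\frac{\dd}{\dd t}\|\nabla v\|_2^2 \le \frac{1}{2\varepsilon}\|u-v\|_2^2.\]
The main technical obstacle is then to bring $\|u-v\|_2^2$ into $L^1(0,\infty)$. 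I expect this to require a uniform-in-time $L^\infty$ bound on $v$, so that by continuity and positivity of $\gamma$ on $[0,\infty)$ one has $\gamma(v)\ge\gamma_*>0$ and hence $\gamma_*\|u-v\|_2^2 \le \io(u-v)^2\gamma(v)\,\dd x \in L^1_t$. Such a uniform bound on $v$ is accessible through the classical regularity theory cited in~\cite{JLZ2022,FuSe2021} combined with the control of $\mathcal{L}_0$. Integration in time and Gr\"onwall then deliver \eqref{lt.00}.

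For step (iv), convergence, the plan is a LaSalle-type argument. By \eqref{lt.00}, the map $t\mapsto v(t)$ is uniformly H\"older continuous from $[0,\infty)$ into $L^2$ (via $\partial_t v \in L^2_{t,x}$) and precompact in $L^2$ (via the embedding $H^1\hookrightarrow L^2$). Along any sequence $t_n\to\infty$, subsequence extraction produces a limit $v^\infty\in L^2$. Averaging $\mathcal{D}_0$ over small windows $(t_n,t_n+\delta)$ and passing to the limit shows that both $\io G_0''(v)|\nabla v|^2\,\dd x$ and $\io(v-m)(v\gamma(v)-m\gamma(m))\,\dd x$ vanish in the limit; combined with the strict monotonicity of $z\mapsto z\gamma(z)$ in a neighborhood of $m>0$, this identifies $v^\infty\equiv m$. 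The first dissipation term then forces $u-m\to 0$ in a weighted $L^2$ sense on the same window, whence $\|\mathcal{K}(u(t_n)-m)\|_2\to 0$ through boundedness of $\mathcal{K}:(H^1)'\to H^1$. Uniqueness of the limit promotes subsequential convergence to full convergence, while $\|v(t)-m\|_2\to 0$ follows from $\io G_0(v(t))\,\dd x\to 0$ together with the quadratic coercivity $G_0(z)\ge c(z-m)^2$ on bounded sets, itself a consequence of the uniform $L^\infty$ bound on $v$ established in step (iii).
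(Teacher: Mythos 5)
Your steps (i) and (ii) are correct and coincide with the paper's argument: testing \eqref{ivp1} against $\mathcal{K}(u-m)$ and \eqref{ivp2} against $G_0'(v)$, plus the completion of the square $-u^2+2uv-2v^2=-(u-v)^2-v^2$ and mass conservation, does produce \eqref{lf0}. The two later steps, however, each contain a genuine gap.

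In step (iii), your differential inequality $\frac{\varepsilon}{2}\|\partial_t v\|_2^2+\frac12\frac{\dd}{\dd t}\|\nabla v\|_2^2\le\frac{1}{2\varepsilon}\|u-v\|_2^2$ is fine, but to conclude \eqref{lt.00} you must put $\|u-v\|_2^2$ into $L^1(0,\infty)$, and your only route is a lower bound $\gamma(v)\ge\gamma_*>0$ coming from a uniform-in-time $L^\infty$ bound on $v$. That bound is not available: the references are invoked only for global classical existence, not for boundedness uniformly on $[0,\infty)$, and under \eqref{mon} the motility may decay like $1/z$ (e.g.\ $\gamma(z)=(1+z)^{-1}$), so $\inf_{t,x}\gamma(v)$ can a priori degenerate; moreover the $L^\infty_t H^1_x$ control of $v$ that might feed such a bound is precisely part of \eqref{lt.00}, i.e.\ the output of this very step, so the argument is circular as written. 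The paper avoids any lower bound on $\gamma(v)$: it writes $\int_\Omega u\partial_t v=\frac{\dd}{\dd t}\int_\Omega uv-\int_\Omega v\partial_t u$, uses the first equation and $-\Delta v=u-v-\varepsilon\partial_t v$ to dominate everything by a multiple of $\mathcal{D}_0$ (using only $\gamma(v)\le\gamma(0)$ and $v\gamma'(v)+\gamma(v)\le\frac12 G_0''(v)$), and then controls $\|uv\|_1$ by $\mathcal{L}_0$ and the $L^1$ bounds; this is what yields \eqref{lt.00} without boundedness of $v$.

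In step (iv) you identify $v^\infty\equiv m$ from the vanishing of $\int_\Omega(v-m)(v\gamma(v)-m\gamma(m))\dd x$ ``combined with the strict monotonicity of $z\mapsto z\gamma(z)$ near $m$''. Hypothesis \eqref{mon} only gives $(z\gamma(z))'\ge0$, and $z\gamma(z)$ may be constant on a whole interval containing $m$ (this happens for admissible motilities behaving like $c/z$ on a range of values); in that degenerate case the second dissipation term only localizes $v^\infty$ in that interval, and one must use the vanishing of $\int_\Omega G_0''(v)|\nabla v|^2\dd x$ (note $G_0''=-m\gamma'>0$ on the plateau), the strictly increasing primitive of $\sqrt{G_0''}$, connectedness of $\Omega$, and the constraint $\langle v^\infty\rangle=m$ to conclude — this is exactly how the paper closes the argument. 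Two further points you gloss over also need the machinery you lack: converting the $\gamma(v)$-weighted smallness of $u-v$ into genuine $L^p$ smallness again requires a lower bound on $\gamma(v)$ (the paper gets $\gamma(z)\ge 1/K_1(1+z)$ from \eqref{mon} and combines it with the $H^1$ bound on $v$, not with an $L^\infty$ bound), and passing from information averaged over the windows $(t_n,t_n+\delta)$ to the claim $\|\mathcal{K}(u(t_n)-m)\|_2\to0$ at the times $t_n$ requires time equicontinuity of $t\mapsto\mathcal{K}(u(t)-m)$ in $L^2$, which the paper supplies by proving $\int_0^\infty\|\mathcal{K}\partial_t u\|_2^2\dd s<\infty$ from $\mathcal{D}_0$; your proposal does not address this. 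Finally, your last assertion that $\int_\Omega G_0(v(t))\dd x\to0$ is never established (and is unnecessary once precompactness and uniqueness of the cluster point are in hand).
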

%%%%%%%%%%%%%%%%

The construction of the Lyapunov functional $\mathcal{L}_0$ and its consequence with respect to the long-term behavior are actually the main contribution of Theorem~\ref{thm.lt}, the existence and uniqueness of a global classical solution to~\eqref{ivp} being granted by \cite[Theorem~1.1]{FuSe2021} and \cite[Remark~1.5]{JLZ2022}. As in \cite{AhnYoon2019} which is devoted to the parabolic-elliptic version of \eqref{ivp} corresponding to $\varepsilon=0$, Theorem~\ref{thm.lt} applies to $\gamma(z)=z^{-k}$, $k\in (0,1]$, and we have thus constructed a Lyapunov functional in that case. A side remark is that pattern formation is excluded by Theorem~\ref{thm.lt}, which is consistent with the outcome of \cite{2012PRL}, where the formation of stripes is observed for a motility $\gamma$ with a very fast decay at infinity.

The following observation on the existence of nonconstant steady states
indicates that the choice $k=1$ in fact even corresponds to a critical nonlinearity
in the family of such algebraic motility rates:
\begin{proposition}\label{prop99}
  Let $N\ge 2$, $k\in (1,\frac{N+2}{(N-2)_+})$ and $\gamma(z):=z^{-k}$ for $z>0$.
  Then given any smooth bounded domain $\Omega_0\subset\R^N$, one can find $R_0>0$ such that whenever $R>R_0$, defining $\Omega:=R\Omega_0=\{Rx: x\in\Omega_0\}$, there are positive nonconstant functions $u\in C^2(\bom)$ and $v\in C^2(\bom)$ satisfying 
  \bas
	\left\{ \begin{array}{l}
	0 = \Delta \big(u\gamma(v)\big)
	\qquad \mbox{in } \Omega, \\[1mm]
	0 = \Delta v - v + u
	\qquad \mbox{in } \Omega, \\[1mm]
	0 = \nabla u\cdot \mathbf{n} = \nabla v\cdot \mathbf{n}
	\qquad \mbox{on } \pO.
	\end{array} \right.
  \eas
\end{proposition}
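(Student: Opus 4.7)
The plan is to reduce the stationary system to a single scalar semilinear Neumann problem and apply a Nehari-manifold minimization; the key is to display a test function whose energy stays bounded while the energy of the constant solution $w\equiv 1$ blows up like $R^N$.

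\emph{Reduction.} The boundary conditions imply $\nabla(u\gamma(v))\cdot\mathbf{n}=0$ on $\pO$, so testing $\Delta(u\gamma(v))=0$ against $u\gamma(v)$ and integrating by parts yields $u\gamma(v)\equiv c$ for some constant $c>0$. Since $\gamma(z)=z^{-k}$, this gives $u=cv^k$; picking $c=1$ (absorbed by rescaling), the problem becomes finding a positive nonconstant $w\in C^2(\bom)$ with
\begin{equation*}
-\Delta w + w = w^k \text{ in }\Omega,\qquad \nabla w\cdot\mathbf{n}=0 \text{ on }\pO.
\end{equation*}
The only positive constant solution of this scalar equation is $w\equiv 1$.

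\emph{Variational construction.} Since $k+1<2N/(N-2)_+$, the embedding $H^1(\Omega)\hookrightarrow L^{k+1}(\Omega)$ is compact, so the functional $J(w):=\tfrac12\|w\|_{H^1}^2-\tfrac{1}{k+1}\int_\Omega (w_+)^{k+1}\dd x$ is of class $C^1$ on $H^1(\Omega)$, satisfies the Palais--Smale condition, and has mountain-pass geometry (since $k>1$). A minimizer $w_R$ of $c_R:=\inf_{\mathcal{N}}J$ on the Nehari manifold $\mathcal{N}:=\{w\in H^1\setminus\{0\}:\|w\|_{H^1}^2=\int_\Omega (w_+)^{k+1}\dd x\}$ therefore exists; by Schauder theory and the strong maximum principle, $w_R\in C^2(\bom)$, $w_R>0$, and $w_R$ solves the PDE classically.

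\emph{Beating the constant level.} Both $w\equiv 1$ and $w_R$ lie in $\mathcal{N}$, and $J(1)=(\tfrac12-\tfrac{1}{k+1})|\Omega_0|R^N\to\infty$ as $R\to\infty$. Let $U>0$ denote the radial ground state of $-\Delta U+U=U^k$ on $\RR^N$ (which exists in the present subcritical range), with finite energy $E^*=(\tfrac12-\tfrac{1}{k+1})\|U\|_{L^{k+1}(\RR^N)}^{k+1}$. Fix an interior point $x_0$ of $\Omega_0$ and, for $R$ large, let $\widetilde U_R$ be a smooth truncation of the translate $U(\cdot-Rx_0)$ compactly supported in $\Omega=R\Omega_0$; by the exponential decay of $U$, $\widetilde U_R\to U$ in $H^1(\RR^N)$. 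Projecting onto $\mathcal{N}$ via the unique dilation factor $t_R\to 1$ yields
\begin{equation*}
c_R \le J(t_R\widetilde U_R) \;\longrightarrow\; E^* \;<\; J(1)\qquad\text{for } R\ge R_0,
\end{equation*}
so $w_R\not\equiv 1$. Setting $v:=w_R$ and $u:=w_R^k$ provides the desired positive nonconstant $C^2$ solution of the original stationary system.

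\emph{Main obstacle.} The crux is the asymptotic comparison $c_R\le E^*+o(1)$: one must simultaneously control the truncation error and the Nehari-projection parameter $t_R$ as $R\to\infty$, ensuring that the tested dilate stays on $\mathcal{N}$ with energy approaching $E^*$. This is a standard Lin--Ni--Takagi-type estimate whose driving ingredient is the exponential decay of the whole-space ground state $U$.
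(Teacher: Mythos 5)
Your proposal is correct, but it follows a genuinely different route from the paper. The paper's proof is a two-line rescaling argument: it invokes the result of Lin--Ni--Takagi (existence of nonconstant positive solutions of $d\Delta w - w + w^k=0$ in $\Omega_0$ with Neumann conditions for all $d$ below some $d_0$), sets $R_0=1/\sqrt{d_0}$, $d=1/R^2$, and defines $v(x)=w^{(d)}(\sqrt d\,x)$, $u=v^k$, so that $u\gamma(v)\equiv 1$ trivially solves the first equation. Your argument uses the same ansatz $u=v^k$ (your preliminary ``reduction'' even shows this ansatz is forced on classical solutions, a small bonus the paper does not state), but then, instead of citing the scalar existence result, you re-prove it on the dilated domain $R\Omega_0$ (note that by the change of variables $y=x/R$ your problem $-\Delta w+w=w^k$ on $R\Omega_0$ is exactly the cited singularly perturbed problem with $d=R^{-2}$): Nehari minimization in the subcritical range $k+1<2N/(N-2)_+$, regularity and the strong maximum principle, and the energy comparison $c_R<J(1)=(\tfrac12-\tfrac1{k+1})|\Omega_0|R^N$ to exclude the constant. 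This is sound, and it buys self-containedness at the price of length. One simplification worth noting: the step you single out as the crux --- the sharp asymptotic $c_R\le E^*+o(1)$ via the whole-space ground state, its exponential decay, and control of the Nehari projection --- is not needed. Since $J(1)\to\infty$ like $R^N$, it suffices that $c_R$ stays bounded as $R\to\infty$, and this follows by testing with a single fixed nonnegative bump supported in a unit ball, translated so that its support lies in $R\Omega_0$ (possible for all large $R$); its Nehari projection has energy independent of $R$. With that replacement the only delicate estimate in your argument disappears, and the proof becomes fully elementary.
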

Proposition \ref{prop99} holds also in space dimension one $N=1$. We refer to \cite{WangXu21} for a more complete description of steady states in that case.

\bigskip

In the final part of this manuscript, we aim at making sure that in the presence of additional zero-order dissipative
mechanisms in the flavor of logistic-type source and degradation terms, global solutions can be constructed
actually without any substantial restriction on the strength of degeneracies in cell diffusion at large values of the
signal. We work in a framework somewhat less relaxed than that considered above. Typically, only one integration by parts is performed, so that the solutions considered
here are sometimes called ``weak solutions'' instead of ``very weak solutions''.
\begin{definition}\label{dw3}
  Let  $\Omega$ be a smooth bounded domain of $\RR^N$, with $N\ge2$, and $\varepsilon>0$, $\gamma\in C((0,\infty))$ and $h\in C([0,\infty))$, and suppose that
  $u^{in}\in L^1(\Omega)$ and $v^{in}\in L^1(\Omega)$ are nonnegative. We then call a pair $(u,v)$ of functions such that, for all $T>0$,
  \be{w31}
	u\in L^1((0,T)\times\Omega) \qquad \mbox{and} \qquad v\in L^1((0,T);W^{1,1}(\Omega)),
  \ee
  a {\em global weak solution of (\ref{02})} if
  $u\ge 0$ and $v>0$ a.e.~in $(0,\infty)\times\Omega$, if, for all $T>0$,
  \be{w32}
	u\,h(u) \in L^1((0,T)\times\Omega)
	\qquad \mbox{and} \qquad
	u\gamma(v)\in L^1((0,T);W^{1,1}(\Omega)),
  \ee
  and if
  \be{wu3}
	- \int_0^\infty \io u\partial_t \varphi \dd x \dd t - \io u^{in} \varphi(0) \dd x
	= \int_0^\infty \io \nabla \big\{ u\gamma(v) \big\} \cdot \nabla \varphi \dd x \dd t + \int_0^\infty \io u\,h(u) 
		\dd x \dd t 
  \ee
  as well as
  \bea{wv3}
	- \int_0^\infty \io \varepsilon v\partial_t\varphi \dd x \dd t - \io \varepsilon v^{in}\varphi(0) \dd x 
	&=& - \int_0^\infty \io  \nabla v\cdot\nabla\varphi \dd x \dd t
	+ \int_0^\infty \io (-v+u) \varphi \dd x \dd t 
  \eea
 hold for any $\varphi\in C_0^\infty([0,\infty)\times\bom)$.
\end{definition}
Our analysis in this direction will be based on a strategy quite independent from that pursued in the previous parts,
focusing on the detection of entropy-like features enjoyed by functionals of the form 
$$\io \left[ u\ln (u+e) + \eps |\nabla v|^2 \right] \dd x.$$
Accordingly, in its most straightforward version detailed in Lemma~\ref{lem222}, this approach will presuppose regularity properties
of $u^{in}$ and especially of $v^{in}$ which go somewhat beyond those from (\ref{init}). 
In view of our principal intention described above, we will refrain from scrutinizing minimal requirements on initial regularity,
and rather formulate our main result in this respect in the following form conveniently accessible to a fairly compact analysis:
\newcommand{\Om}{\Omega}
\newcommand{\ueta}{u_\eta}
\newcommand{\veta}{v_\eta}
\begin{theorem}\label{theo262b}
  Let $N\ge 2$ and $\Om\subset\R^N$ be a bounded domain with smooth boundary, and suppose that $\eps>0$, that
  \be{gamma2}
	\gamma\in C^3((0,\infty))
	\mbox{ is such that $\gamma>0$ in $[0,\infty)$ and } \quad
	\sup_{s>s_0} \left \{ \gamma(s) + \frac{s\gamma'^2(s)}{\gamma(s)} \right\} <\infty
	\quad \mbox{for all } s_0>0,
  \ee
  and that $h\in C([0,\infty))$ satisfies
  \be{h}
	\lim_{s\to\infty} \frac{h(s)\ln s}{s} = -\infty\,.
  \ee
  Then for any choice of $u^{in} \in C(\bom)$ and $v^{in}\in W^{1,\infty}(\Om)$ such that
  $u^{in}\ge 0$ and $v^{in}>0$ in $\bom$,
  the problem (\ref{02}) possesses at least one global weak solution in the sense of Definition \ref{dw3}.
  This solution has the additional properties that, for all $T>0$,
  \be{262.1}
	\left\{ \begin{array}{l}
	u\in L^\infty((0,T);L\log L(\Om)) \cap L^2((0,T);L^2(\Om))
	\qquad \mbox{ and }  \\[1mm]
	v\in L^\infty((0,T);H^1(\Om)) \cap L^2((0,T);H^2(\Om))
	\end{array} \right.
  \ee
  as well as
  \be{262.11}
	u\sqrt{\gamma(v)}\in L^{4/3}((0,T);W^{1,4/3}(\Om)).
  \ee
\end{theorem}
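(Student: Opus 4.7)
My plan is to build regularized problems admitting global classical solutions, derive entropy-type estimates uniform in the regularization parameter based on the functional $\io [\Psi(u) + (\eps/2) |\nabla v|^2] \dd x$ with $\Psi(s):=\int_0^s \ln(\sigma+e)\dd \sigma$, and then pass to the limit by compactness. I would first regularize the problem, say by replacing $\gamma$ with the bounded motility $\gamma_\eta := \gamma/(1+\eta\gamma)$, truncating $h$ at height $1/\eta$, and mollifying the initial data, so that standard parabolic theory produces a nonnegative global classical solution $(\ueta,\veta)$. Since $v^{in} \ge \beta_0 > 0$ on $\bom$ and $\ueta \ge 0$, the comparison principle applied to $\eps \partial_t \veta - \Delta \veta + \veta = \ueta$ forces $\veta(t,x) \ge \beta_0 e^{-t/\eps}$, so that on each fixed $[0,T]$ the range of $\veta$ is contained in $[c_T,\infty)$ for some $c_T > 0$. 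On such an interval \eqref{gamma2} ensures both $\gamma(\veta)$ and $\gamma'(\veta)^2/\gamma(\veta)$ stay bounded uniformly in $\eta$.

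The core calculation is then the entropy identity obtained by testing the $\ueta$-equation against $\ln(\ueta+e)$ and the $\veta$-equation against $-\Delta \veta$. Together these produce a good dissipation $\io \gamma(\veta)|\nabla \ueta|^2/(\ueta+e)$, a good Laplacian term $\io |\Delta \veta|^2$, a cross term
\[
-\io \frac{\ueta \gamma'(\veta)}{\ueta+e}\nabla \ueta \cdot \nabla \veta \dd x,
\]
a source $\io \ueta h(\ueta)\ln(\ueta+e)\dd x$, and a term $-\io \ueta \Delta \veta \dd x$. Young's inequality applied to the cross term absorbs half of $\io \gamma(\veta)|\nabla \ueta|^2/(\ueta+e)$ while leaving a contribution bounded by $C_T \io \ueta |\nabla \veta|^2$, which together with $-\io \ueta \Delta \veta$ is handled through Gagliardo--Nirenberg interpolation, producing a small fraction of $\io |\Delta \veta|^2$ plus a constant multiple of $\io \ueta^2$. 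Since \eqref{h} guarantees $u h(u)\ln(u+e) \le -M u^2 + C_M$ for arbitrary $M>0$, taking $M$ large enough absorbs all remaining $\ueta^2$-terms and yields a uniform bound on $\Phi(t) := \io \Psi(\ueta) + (\eps/2)|\nabla \veta|^2$.

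This immediately furnishes \eqref{262.1} together with $\io \gamma(\veta)|\nabla \ueta|^2/(\ueta+e) \in L^1(0,T)$. To obtain \eqref{262.11} I would write
\[
\nabla\big(\ueta \sqrt{\gamma(\veta)}\big) = \sqrt{\gamma(\veta)}\,\nabla \ueta + \frac{\ueta \gamma'(\veta)}{2\sqrt{\gamma(\veta)}}\nabla \veta
\]
and apply H\"older with exponents $(3/2,3)$ to each summand, using $\ueta \in L^2(L^2)$ together with the dissipation bound. A standard Aubin--Lions--Simon argument, fed by time-regularity extracted from the equations and by \eqref{262.11}, then gives strong convergence of $\ueta$ in $L^2((0,T)\times\Om)$ and of $\veta$ in $L^2((0,T);H^1(\Om))$, which suffice to pass to the limit in the weak formulation \eqref{wu3}--\eqref{wv3}.

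The main obstacle I expect is the treatment of the cross term $\io \ueta \gamma'(\veta)/(\ueta+e)\,\nabla \ueta \cdot \nabla \veta$: it must be absorbed without consuming either of the two main dissipative quantities, and it is precisely here that both the structural hypothesis \eqref{gamma2} (bounding $\gamma'^2/\gamma$ once $\veta$ stays in a positive interval) and the super-$s/\ln s$ decay of $h$ from \eqref{h} are crucially used. A secondary subtlety is that the lower bound $\veta \ge \beta_0 e^{-T/\eps}$ degrades exponentially in $T$, so all constants above are $T$-dependent; this is harmless for the construction of solutions on finite intervals but prevents any direct long-time information from this argument.
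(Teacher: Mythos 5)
There is a genuine gap at the heart of your entropy estimate, precisely at the point you yourself flag as the main obstacle. After Young's inequality the cross term leaves you with (up to constants) $\int_\Omega \frac{u_\eta^2}{u_\eta+e}\frac{\gamma'^2(v_\eta)}{\gamma(v_\eta)}|\nabla v_\eta|^2\,\mathrm{d}x$, and you only exploit the weak consequence $\gamma'^2(v_\eta)/\gamma(v_\eta)\le C_T$ on $\{v_\eta\ge c_T\}$, reducing this to $C_T\int_\Omega u_\eta|\nabla v_\eta|^2\,\mathrm{d}x$, which you then claim to absorb via Gagliardo--Nirenberg into $\delta\int_\Omega|\Delta v_\eta|^2 + C\int_\Omega u_\eta^2$. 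This does not work in the generality of the theorem: $\int_\Omega|\nabla v|^4$ cannot be bounded by $\int_\Omega|\Delta v|^2$ without either a $1/v^2$ weight or a uniform $L^\infty$ bound on $v$ (which is not available here), and the interpolation $\|\nabla v\|_4^2\le C\|\nabla v\|_{H^1}^{N/2}\|\nabla v\|_2^{2-N/2}$ leaves, after Young, a coefficient like $\|u_\eta\|_2^{4/(4-N)}\|\nabla v_\eta\|_2^2$ that cannot be absorbed by the logistic damping $-M\int u_\eta^2$ for $N\ge 3$ (for $N\ge 4$ no absorption of the $\|\nabla v\|_{H^1}$ power is possible at all). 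The paper's argument uses exactly the ingredient you dropped: the factor $s$ in hypothesis \eqref{gamma2} gives $\gamma'^2(v_\eta)/\gamma(v_\eta)\le c_1(T)/v_\eta$, so the remainder is $\int_\Omega u_\eta\,|\nabla v_\eta|^2/v_\eta$, which by Young is controlled by $\delta\int_\Omega|\nabla v_\eta|^4/v_\eta^2 + C\int_\Omega u_\eta^2$, and the weighted term is then dominated by the dissipation $\int_\Omega|\Delta v_\eta|^2$ through the functional inequality $c_2\int_\Omega|\nabla\varphi|^4/\varphi^2\le\int_\Omega|\Delta\varphi|^2$ for positive Neumann functions (Winkler's lemma plus elliptic regularity). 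Without this weighted inequality and the full strength of \eqref{gamma2}, your entropy bound, and consequently also your proof of \eqref{262.11} (where the term $\int_\Omega u_\eta^2\gamma'^2(v_\eta)|\nabla v_\eta|^2/\gamma(v_\eta)$ again needs the $v$-weight), does not close for all $N\ge 2$.

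Two secondary issues. First, your regularization $\gamma_\eta=\gamma/(1+\eta\gamma)$ does not remove the degeneracy of the diffusion at large $v$ (if $\gamma\to 0$ at infinity, so does $\gamma_\eta$), so ``standard parabolic theory'' does not yield global classical solutions of the approximate problems; the paper instead truncates the signal production, replacing $u_\eta$ by $u_\eta/(1+\eta u_\eta)$ in the $v$-equation, which keeps $v_\eta$ bounded on finite time intervals, hence $\gamma(v_\eta)$ bounded away from zero, and makes Tao--Winkler-type global existence applicable. Second, your compactness step aims at strong $L^2$ convergence of $u_\eta$ itself, but your spatial estimate \eqref{262.11} controls $\nabla\{u_\eta\sqrt{\gamma(v_\eta)}\}$, not $\nabla u_\eta$; the clean way (taken in the paper) is to apply Aubin--Lions to $u_\eta\gamma(v_\eta)$, for which both the $W^{1,4/3}$ spatial bound and a time-derivative bound in $(W^{1,p}(\Omega))'$ are available, and then to recover $u=z/\gamma(v)$ a.e.\ from the strong convergence of $v_\eta$, the lower bound $v_\eta\ge(\inf v^{in})e^{-t/\varepsilon}$, and the positivity of $\gamma$, with Vitali's theorem handling $u_\eta$ and $u_\eta h(u_\eta)$.
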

The existence of classical bounded solutions to~\eqref{02} is proven for growth functions $h(z)=h_0 \, (1-z^l)$ when $l=1$ and $h_0$ is large enough in \cite{LiuXu19, WW2019}, and when $h_0>0$ and $l>\max(2,(N+2)/2)$ in \cite{lv_wang_2020} (under additional assumptions on $\gamma$). The case where $l=1$ and $h_0>0$ is treated in two-dimensional domains in \cite{JKW2018}. Here, we obtain weak solutions for a growth function of this form with $l\ge1$ and $h_0>0$.

The structure of the paper is the following: \Cref{sec.pmt} is devoted to the proof of \Cref{theo26}. Then \Cref{thm.lt} and \Cref{prop99} are proven in \Cref{sec.ltblf}. Finally, the results on system (\ref{02}) are discussed in Section~\ref{sec.ltg}. 
%%%%%%%%%%%%%%%%
%%%%%%%%%%%%%%%%

\section{Existence in the absence of logistic-type growth}\label{sec.pmt}

Let $\psi \in C^\infty_0(\RR)$ such that $\psi\ge0$, $\supp \psi \subset (-1,1)$, $\|\psi\|_{L^1(\RR)}=1$, and let, for $\eta>0$,
\begin{equation*} 
	\psi_\eta(z) := \frac{1}{\eta} \psi\left(\frac{z}{\eta}\right)\,, \qquad z\in [0,\infty)\,.
\end{equation*}
For $\eta\in (0,1)$, we define
\begin{equation*} 
	\tge(z) := \eta + \left(\psi_{\eta} \ast \gamma\right)(z+\eta)\,, \qquad z\in [0,\infty)\,, 
\end{equation*}
where the symbol $\ast$ indicates the convolution product on $\RR$ ($\gamma$ being extended on $\RR$ by symmetry). We first obtain some properties of $\tge$.
%%%%%%%%%%%%%%%%
\begin{lemma}\label{lem.gameta}
For all $\eta\in(0,1)$, the function $\tge$ satisfies
\begin{eqnarray}
0 < \eta \le \tge (z) \le K_0 + 1 \,, \qquad z\in [0,\infty)\,, \label{bdeta}\\
\frac{1}{\tge(z)} \le 3^k K_1  (z+1)^k  \,, \qquad z\in [0,\infty)\,, \label{phi1eta}
\end{eqnarray}
where $K_0 = ||\gamma||_{L^\infty(0,\infty)}$ and $K_1$ and $k$ are defined in \eqref{phi1}.
\end{lemma}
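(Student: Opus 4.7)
The statement is essentially a pair of mollification estimates, so I expect the proof to be short and direct, with no real obstacle. The idea is to exploit the three standard properties of the mollifier $\psi_\eta$ (nonnegativity, unit $L^1$ mass, support in $(-\eta,\eta)$) together with the hypotheses on $\gamma$, namely its $L^\infty$ bound by $K_0$, its positivity, and the pointwise lower bound $\gamma(s) \ge K_1^{-1}(s+1)^{-k}$ implied by \eqref{phi1}.

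For the upper bound in \eqref{bdeta}, I would simply note that $(\psi_\eta * \gamma)(z+\eta) \le \|\gamma\|_{L^\infty(0,\infty)} \int_{\RR}\psi_\eta = K_0$ since $\psi_\eta\ge 0$ has unit mass, whence $\tge(z) \le \eta + K_0 \le 1 + K_0 = K_0+1$ because $\eta\in(0,1)$. For the lower bound, since $\gamma > 0$ and $\psi_\eta \ge 0$ the convolution is nonnegative, so $\tge(z) \ge \eta > 0$ trivially.

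For \eqref{phi1eta}, the key step is to get a lower bound on the convolution. I would write
\begin{equation*}
(\psi_\eta * \gamma)(z+\eta) = \int_{-\eta}^{\eta} \psi_\eta(y)\, \gamma(z+\eta-y)\,\mathrm{d}y
\end{equation*}
and observe that for $z\ge 0$ and $y\in(-\eta,\eta)$ one has $z+\eta-y \in (z, z+2\eta) \subset [0,\infty)$, so no extension by symmetry is actually needed here. Applying \eqref{phi1} pointwise inside the integral gives
\begin{equation*}
\gamma(z+\eta-y) \ge \frac{1}{K_1(z+\eta-y+1)^k} \ge \frac{1}{K_1(z+2\eta+1)^k} \ge \frac{1}{K_1(z+3)^k},
\end{equation*}
using $\eta<1$. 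The elementary inequality $z+3 \le 3(z+1)$ valid for $z\ge 0$ then yields $(z+3)^k \le 3^k(z+1)^k$, and integrating against $\psi_\eta$ (unit mass) preserves the lower bound. Combining with $\tge(z) \ge (\psi_\eta*\gamma)(z+\eta)$ delivers $1/\tge(z) \le 3^k K_1(z+1)^k$.

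No step here looks delicate; the only very minor point to be careful about is making sure the argument $z+\eta-y$ stays in $[0,\infty)$ so that \eqref{phi1} applies directly rather than through the symmetric extension, which the shift by $+\eta$ inside the definition of $\tge$ is precisely designed to guarantee.
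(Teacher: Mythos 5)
Your proof is correct and follows essentially the same route as the paper: bound the convolution above by $K_0$ using the unit mass of $\psi_\eta$, use $\tge\ge\eta$ for positivity, and for \eqref{phi1eta} apply \eqref{phi1} pointwise inside the convolution, noting that the argument $z+\eta-\tilde z$ lies in $(z,z+2\eta)$ and that $1+z+2\eta\le 3(1+z)$ yields the factor $3^k$ (the paper factors this as $(1+2\eta)^k(1+z)^k\le 3^k(1+z)^k$, a cosmetic difference only).
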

%%%%%%%%%%%%%%%%
\begin{proof}
We first see that thanks to the nonnegativity of $\gamma$ and $\psi$, we have $\tge \ge \eta$ and
\begin{equation*}
\|\tge \|_{L^\infty(0,\infty)} \le \|\gamma\|_{L^\infty(0,\infty)} \|\psi_{\eta}\|_{L^1(\RR)} + \eta = K_0 + \eta \le K_0 + 1 \,.
\end{equation*}
Then, for $z>0$, we compute, using \eqref{phi1}, 
\begin{align*}
\tge (z)
&= \eta + \int_{-\eta}^{\eta} \psi_{\eta}(\tilde z) \gamma(z+\eta-\tilde z)\, d\tilde z  \ge \eta + \int_{-\eta}^{\eta}  \frac{\psi_{\eta}(\tilde z)}{K_1(1+z+\eta-\tilde z)^k}\, d\tilde z\\
&\ge \eta + \int_{-\eta}^{\eta}  \frac{\psi_{\eta}(\tilde z)}{K_1(1+z+2\eta)^k}\, d\tilde z \ge \eta + \frac{1}{K_1(1+2 \eta)^k (1+z)^k} \\
& \ge \frac{1}{3^k K_1(1+z)^k} \,,
\end{align*}
and the proof is complete.
\end{proof}

Next,  let $(u_\eta^{in},v_\eta^{in})_\eta$ be nonnegative functions in $C(\bar{\Omega})\times W^{1,\infty}(\Omega)$ such that
\begin{equation}
	\langle u_\eta^{in} \rangle = \langle u^{in} \rangle =: m\,, \qquad \langle v_\eta^{in} \rangle = \langle v^{in} \rangle\,, \label{a1}
\end{equation}
and
\begin{equation}
		\begin{split}
	& \lim_{\eta\to 0} \left\{ \|u_\eta^{in} - u^{in}\|_{(H^1)'} + \|u_\eta^{in} - u^{in}\|_{p_0} + \| v_\eta^{in} - v^{in}\|_2 \right\} = 0\,, \\
	& \sup_{\eta} \|u_\eta^{in}\|_{p_0} \le 1 + \|u^{in}\|_{p_0}\,, \qquad \sup_{\eta} \|v_\eta^{in}\|_\infty \le 1 + \|v^{in}\|_\infty\,.
	\end{split} \label{a2}
\end{equation}
Moreover, if $v^{in}\in W^{1,q}(\Omega)$ for some $q\in (1,2)$, then $(v_\eta^{in})_\eta$ can be constructed so as to satisfy
\begin{equation}
	\sup_\eta\|v_\eta^{in}\|_{W^{1,q}} \le 1 + \|v^{in}\|_{W^{1,q}}\,. \label{a2b}
\end{equation}
Thanks to the regularity of $\tge$, $u_\eta^{in}$, and $v_\eta^{in}$, we are in a position to apply \cite[Theorem~1.2]{TaWi2017} to obtain the existence of a nonnegative global weak solution $(u_\eta,v_\eta)$ to the initial value problem  
\begin{subequations}\label{rivp}
	\begin{align}
		\partial_t u_\eta - \Delta (u_\eta \tge(v_\eta)) = 0\,,& \qquad (t,x)\in (0,\infty)\times\Omega\,, \label{rivp1} \\
		\varepsilon \partial_t v_\eta  - \Delta v_\eta + v_\eta = u_\eta\,,& \qquad (t,x)\in (0,\infty)\times\Omega\,, \label{rivp2} \\	
		\nabla (u_\eta \tge(v_\eta)) \cdot \mathbf{n}  = \nabla v_\eta \cdot \mathbf{n} = 0\,,& \qquad (t,x)\in (0,\infty)\times\partial\Omega\,, \label{rivp3} \\
		(u_\eta,v_\eta)(0, \cdot)  = (u_\eta^{in},v_\eta^{in})\,,& \qquad x\in \Omega\,, \label{rivp4}
\end{align}
\end{subequations}
satisfying
\begin{align}
u_\eta & \in L^2((0,T)\times\Omega) \cap L^{(N+2)/(N+1)}((0,T);W^{1,(N+2)/(N+1)}(\Omega))\,, \label{a10}\\
v_\eta & \in L^\infty((0,T);H^1(\Omega))\cap L^2((0,T);H^2(\Omega))\,, \label{a11}
\end{align} 
for any $T>0$.
\medskip

As a first consequence of~\eqref{a1} and~\eqref{rivp}, we identify the time evolution of the space averages of $u_\eta$ and $v_\eta$.

%%%%%%%%%%%%%%%%
\begin{lemma}\label{lem.a2}
For $t\ge 0$, 
\begin{equation}
	\langle u_\eta(t) \rangle = m\,, \qquad \langle v_\eta(t) \rangle = \langle v^{in} \rangle e^{-t/\varepsilon} + m(1-e^{-t/\varepsilon}) \le \max\{ \langle v^{in} \rangle , m \}\,. \label{a3}
\end{equation}
\end{lemma}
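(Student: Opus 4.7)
The strategy is the standard mass/average identification obtained by testing each equation against the constant function $1$. The main point is to justify this test in the weak formulation, after which the computations are routine ODE arguments.

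First, I would recover conservation of mass for $u_\eta$. Since $(u_\eta,v_\eta)$ satisfies \eqref{rivp1} in the weak sense with the no-flux boundary condition \eqref{rivp3}, and since $u_\eta \in L^2((0,T)\times\Omega)$ and $u_\eta \tge(v_\eta)$ has enough spatial regularity by \eqref{a10}, testing \eqref{rivp1} against a time cutoff $\chi(s)$ (depending only on time, with $\chi(0)=1$) gives
\begin{equation*}
\int_\Omega u_\eta(t)\,\mathrm{d}x - \int_\Omega u_\eta^{in}\,\mathrm{d}x = \int_0^t \int_\Omega \Delta(u_\eta \tge(v_\eta))\,\mathrm{d}x\,\mathrm{d}s = 0,
\end{equation*}
by Gauss' theorem and \eqref{rivp3}. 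Dividing by $|\Omega|$ and using \eqref{a1} yields $\langle u_\eta(t)\rangle = \langle u_\eta^{in}\rangle = m$ for all $t\ge 0$.

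Next, integrating \eqref{rivp2} over $\Omega$ in the same manner and using once more $\int_\Omega \Delta v_\eta\,\mathrm{d}x = 0$ from \eqref{rivp3} together with the regularity \eqref{a11}, I obtain
\begin{equation*}
\varepsilon\,\frac{\mathrm{d}}{\mathrm{d}t}\langle v_\eta(t) \rangle = -\langle v_\eta(t)\rangle + \langle u_\eta(t)\rangle = -\langle v_\eta(t)\rangle + m,
\end{equation*}
with initial value $\langle v_\eta(0)\rangle = \langle v_\eta^{in}\rangle = \langle v^{in}\rangle$ by \eqref{a1}. Solving this linear scalar ODE gives exactly
\begin{equation*}
\langle v_\eta(t)\rangle = \langle v^{in}\rangle e^{-t/\varepsilon} + m\bigl(1 - e^{-t/\varepsilon}\bigr),
\end{equation*}
which, being a convex combination of $\langle v^{in}\rangle$ and $m$, is bounded above by $\max\{\langle v^{in}\rangle,m\}$.

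The only place that needs mild care is the justification that the constant function $1$ is an admissible test, since the test functions in the weak formulation of \cite[Theorem~1.2]{TaWi2017} are typically required to be compactly supported or vanish in a specified way; I would resolve this either by invoking the regularity \eqref{a10}--\eqref{a11}, which renders $u_\eta$ a distributional solution whose spatial average is absolutely continuous in $t$, or by approximating $1$ by a standard family of smooth spatial cutoffs and passing to the limit using dominated convergence together with the boundary condition \eqref{rivp3}. I do not expect any genuine obstacle here; the statement is essentially a bookkeeping lemma.
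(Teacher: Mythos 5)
Your proposal is correct and follows the same route as the paper: integrate each equation over $\Omega$, use the no-flux boundary conditions to kill the diffusion terms (giving $\langle u_\eta(t)\rangle = m$), and solve the resulting linear ODE for $\langle v_\eta\rangle$, the bound by $\max\{\langle v^{in}\rangle,m\}$ following since the solution is a convex combination. The extra care you take about admissibility of the constant test function is reasonable but not an issue the paper dwells on, given the regularity \eqref{a10}--\eqref{a11} of the approximate solutions.
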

%%%%%%%%%%%%%%%%

\begin{proof}
	The first identity in \eqref{a3} readily follows from \eqref{a1}, \eqref{rivp1}, and \eqref{rivp3}, after integration over $\Omega$. We next integrate \eqref{rivp2} over $\Omega$ and use \eqref{a1} and \eqref{rivp3} to obtain
	\begin{equation}
		\varepsilon \frac{\dd }{\dd t} \|v_\eta\|_1 + \|v_\eta\|_1 = \|u_\eta\|_1 = m |\Omega|\,, \qquad t\ge 0\,. \label{a3b}
	\end{equation}
	Integrating \eqref{a3b} completes the proof of Lemma \ref{lem.a2}.
\end{proof}

\bigskip

 We define then $w_\eta = w_\eta(t,x)$ as the unique nonnegative solution of the elliptic equation  (in the $x$ variable, for a given $t$)
\begin{subequations}\label{no4}
	\begin{align}
		 - \Delta w_\eta + w_\eta & = u_\eta\,, \qquad (t,x)\in (0,\infty)\times\Omega\,, \label{no3a} \\	
		\nabla w_\eta \cdot \mathbf{n}    & = 0\,, \qquad (t,x)\in (0,\infty)\times\partial\Omega\,. \label{no3b} 
\end{align}
\end{subequations}
Thanks to this auxiliary problem, we have the following lemma.

%%%%%%%%%%%%%%%%
\begin{lemma}\label{lem.a21}
For $t\in [0,T]$, there exists some constant $C(T) \ge 0$ such that
\begin{equation}\label{a21}
\|u_\eta(t)\|^2_{(H^1)'} + ||v_\eta(t) ||^2_2 + \int_0^t  ||v_\eta(s) ||^2_{H^1} \dd s 
+  \int_0^t  \int_{\Omega}  u_\eta(s, \cdot)^2\, \gamma_{\eta} (v_\eta(s, \cdot)) \dd x \dd s \le C(T).
\end{equation}
\end{lemma}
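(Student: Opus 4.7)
The plan is to derive two coupled energy estimates—one for $\|u_\eta\|_{(H^1)'}^2$ producing the weighted dissipation $\int_\Omega u_\eta^2 \tge(v_\eta)\,\dd x$, and one for $\|v_\eta\|_2^2$ producing an $H^1$-dissipation for $v_\eta$—and then close the loop via Gr\"onwall's lemma.

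First, since $\langle u_\eta(t) \rangle = m$ by Lemma~\ref{lem.a2}, equation~\eqref{rivp1} rewrites (as in \eqref{ivp1K}) as
\begin{equation*}
\partial_t \mathcal{K}(u_\eta - m) = \langle u_\eta \tge(v_\eta) \rangle - u_\eta \tge(v_\eta).
\end{equation*}
Testing this identity against $\mathcal{K}(u_\eta - m)$, which has zero mean, and integrating by parts while using $-\Delta \mathcal{K}(u_\eta - m) = u_\eta - m$, I obtain
\begin{equation*}
\frac{1}{2} \frac{\dd}{\dd t} \|u_\eta\|_{(H^1)'}^2 + \int_\Omega u_\eta (u_\eta - m) \tge(v_\eta)\,\dd x = 0.
\end{equation*}
Young's inequality, combined with the uniform bound $\tge \le K_0 + 1$ from Lemma~\ref{lem.gameta}, then allows me to absorb the $m$-term into the dissipation and produce
\begin{equation*}
\frac{\dd}{\dd t} \|u_\eta\|_{(H^1)'}^2 + \int_\Omega u_\eta^2 \tge(v_\eta)\,\dd x \le C_1.
\end{equation*}

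Second, testing \eqref{rivp2} against $v_\eta$ gives
\begin{equation*}
\frac{\varepsilon}{2} \frac{\dd}{\dd t} \|v_\eta\|_2^2 + \|\nabla v_\eta\|_2^2 + \|v_\eta\|_2^2 = \int_\Omega u_\eta v_\eta\,\dd x.
\end{equation*}
To handle the cross term I split $u_\eta = (u_\eta - m) + m$; the definition of $\mathcal{K}$ and an integration by parts yield $\int_\Omega (u_\eta - m) v_\eta\,\dd x = \int_\Omega \nabla \mathcal{K}(u_\eta - m) \cdot \nabla v_\eta\,\dd x$, which Cauchy--Schwarz bounds by $\|u_\eta\|_{(H^1)'} \|\nabla v_\eta\|_2$, while the $m$-contribution is controlled by $m|\Omega|^{1/2}\|v_\eta\|_2$. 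Two applications of Young's inequality then give
\begin{equation*}
\frac{\varepsilon}{2} \frac{\dd}{\dd t} \|v_\eta\|_2^2 + \frac{1}{2}\|v_\eta\|_{H^1}^2 \le C_2 \|u_\eta\|_{(H^1)'}^2 + C_2.
\end{equation*}

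Adding the two resulting inequalities and setting $X(t) := \|u_\eta(t)\|_{(H^1)'}^2 + \varepsilon \|v_\eta(t)\|_2^2$, I arrive at
\begin{equation*}
X'(t) + \int_\Omega u_\eta^2 \tge(v_\eta)\,\dd x + \|v_\eta(t)\|_{H^1}^2 \le C_3 + C_3\, X(t).
\end{equation*}
Gr\"onwall's lemma yields $\sup_{t \in [0,T]} X(t) \le C(T)$, and a final time integration delivers the integral bound on the dissipation, proving \eqref{a21}. The delicate point is the control of the cross term $\int_\Omega u_\eta v_\eta\,\dd x$: it is precisely the choice of the duality norm \eqref{H1pnorm}, together with the mass conservation $\langle u_\eta \rangle = m$, that lets us recast this term as a gradient pairing and close the estimate.
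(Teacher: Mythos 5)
Your proof is correct and rests on the same energy/duality idea as the paper's, but the implementation differs in a small and legitimate way. The paper works with $w_\eta=\mathcal{A}^{-1}u_\eta$ defined through \eqref{opa}: it differentiates $\|w_\eta\|_{H^1}^2$, bounds $\int_\Omega u_\eta\gamma_\eta(v_\eta)w_\eta\,\dd x\le (K_0+1)\|w_\eta\|_{H^1}^2$ using $\int_\Omega u_\eta w_\eta\,\dd x=\|w_\eta\|_{H^1}^2$, rewrites the coupling term as $\int_\Omega u_\eta v_\eta\,\dd x=\int_\Omega(v_\eta w_\eta+\nabla v_\eta\cdot\nabla w_\eta)\,\dd x$, and only at the very end converts to the $(H^1)'$ norm via the equivalence of \eqref{H1pnorm} and \eqref{H1pnormA}; this choice of $\mathcal{A}^{-1}$ also pays off later in the comparison argument of the next lemma. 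You instead work directly with $\mathcal{K}(u_\eta-m)$ and the norm \eqref{H1pnorm}, which requires the mass conservation $\langle u_\eta\rangle=m$ of Lemma~\ref{lem.a2} and the rewriting \eqref{kivp0}, and produces the extra lower-order term $m\int_\Omega u_\eta\gamma_\eta(v_\eta)\,\dd x$ that you correctly absorb by Young's inequality and \eqref{bdeta}; in exchange you avoid the norm-equivalence step. Both routes close by adding the two differential inequalities and applying Gr\"onwall. Two small points to tighten: (i) "testing the identity against $\mathcal{K}(u_\eta-m)$" literally yields $\tfrac12\frac{\dd}{\dd t}\|\mathcal{K}(u_\eta-m)\|_2^2$ on the left; to obtain $\tfrac12\frac{\dd}{\dd t}\|\nabla\mathcal{K}(u_\eta-m)\|_2^2$ you should pair \eqref{kivp0} with $u_\eta-m=-\Delta\mathcal{K}(u_\eta-m)$ (equivalently, test \eqref{rivp1} with $\mathcal{K}(u_\eta-m)$), though the displayed identity you state is the correct one; (ii) the Gr\"onwall step needs $X(0)$ bounded uniformly in $\eta$, which follows from \eqref{a2} and \eqref{initb} but deserves a word. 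Neither point affects the validity of the argument.
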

%%%%%%%%%%%%%%%%

\begin{proof}
 We observe that, by \eqref{bdeta}, \eqref{rivp1}, and \eqref{no4},  
\begin{eqnarray} \frac12 \frac{\dd}{\dd t} ||w_\eta||^2_{H^1} &=& \int_{\Omega} w_\eta\, \Delta ( u_\eta\, \gamma_{\eta} (v_\eta)) \dd x = \int_{\Omega}  u_\eta\, \gamma_{\eta} (v_\eta)\, (w_\eta - u_\eta) \dd x \nn \\
&\le& (K_0+1) \, ||w_\eta||^2_{H^1} -  \int_{\Omega}  u_\eta^2\, \gamma_{\eta} (v_\eta) \dd x \,. \label{bb}
 \end{eqnarray}
Also, it follows from~\eqref{rivp2} that
\begin{eqnarray} \frac{\var}2 \, \frac{\dd}{\dd t} ||v_\eta||^2_{2} +  ||v_\eta||^2_{H^1} & = &  \int_{\Omega}  u_\eta \, v_\eta \dd x = \int_\Omega \big( v_\eta w_\eta + \nabla v_\eta\cdot \nabla w_\eta \big) \dd x\nn \\
\label{bb2}
 &\le& \frac12 ||v_\eta||^2_{H^1} +  \frac12 ||w_\eta||^2_{H^1} \,,
\end{eqnarray}
so that 
\begin{equation}\label{bb3}
 \frac{\dd}{\dd t} \left[  \frac12 ||w_\eta||^2_{H^1} +  \var\, ||v_\eta||^2_{2} \right] +  ||v_\eta||^2_{H^1} 
+  \int_{\Omega}  u_\eta^2\, \gamma_{\eta} (v_\eta) \dd x
\le (2+ K_0) \,  ||w_\eta||^2_{H^1}\,.
\end{equation}
After integration with respect to time, for all $T>0$, there exists some constant $C(T) \ge 0$ (we emphasize the dependence with respect to $T$, but it also depends on the parameters of the problem but not on $\eta\in (0,1)$) such that
\begin{equation*}
 ||w_\eta(t)||^2_{H^1} +   ||v_\eta(t) ||^2_{2} + \int_0^t  ||v_\eta(s) ||^2_{H^1} \dd s 
+  \int_0^t  \int_{\Omega}  u_\eta(s, \cdot)^2\, \gamma_{\eta} (v_\eta(s, \cdot)) \dd x \dd s \le C(T)\,, \qquad t\in [0,T]\,.
\end{equation*}
Recalling the definition of $\mathcal{A}^{-1}$ in \eqref{opa}, we note that $w_\eta = \mathcal{A}^{-1} u_\eta$, and conclude the proof of the lemma by equivalence of the norms \eqref{H1pnorm} and \eqref{H1pnormA}.
\end{proof}

Building upon \eqref{a21}, we derive additional estimates on $(v_\eta)_\eta$ with the help of a comparison argument introduced in \cite{FuJi2020} (and subsequently developed further in \cite{FuJi2020b, FuJi2020c, FuSe2021, JLZ2022, LiJi2020, LyWa2021, LyWa2021a}) and parabolic maximal regularity. 
	
%%%%%%%%%%%%%%%%
\begin{lemma}\label{lem.a22}
For all $q\in (1,\infty)$ and $T>0$, there exists some constant $C(T,q) \ge 0$ such that
\begin{equation}\label{bb5}
 \int_0^T   ||v_\eta(t) ||_{q}^q \dd t \le C(T,q). 
\end{equation}
\end{lemma}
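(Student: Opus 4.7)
The plan is to use a bootstrap iteration alternating between an integrability tradeoff coming from \eqref{a21}-\eqref{phi1eta} and parabolic $L^r$-maximal regularity applied to \eqref{rivp2}. As a starting point, Lemma~\ref{lem.a21} bounds $v_\eta$ in $L^\infty(0,T;L^2(\Omega))\cap L^2(0,T;H^1(\Omega))$, and a standard parabolic interpolation (based on Gagliardo-Nirenberg and H\"older in time) yields $v_\eta\in L^{\sigma_0}((0,T)\times\Omega)$ with $\sigma_0 := 2(N+2)/N > 2$.

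For the tradeoff step, assuming $v_\eta\in L^{\sigma}((0,T)\times\Omega)$ for some $\sigma \ge \sigma_0$ and choosing $r = 2\sigma/(\sigma+k) \in (1,2)$ (so that $kr/(2-r)=\sigma$), H\"older's inequality on $(0,T)\times\Omega$ with exponents $2/r$ and $2/(2-r)$, combined with \eqref{a21} and \eqref{phi1eta}, gives
\begin{equation*}
    \int_0^T \int_\Omega u_\eta^r \, \dd x\dd t
    \le \left( \int_0^T\int_\Omega u_\eta^2\gamma_\eta(v_\eta) \, \dd x\dd t\right)^{r/2}
    \left( 3^k K_1 \int_0^T\int_\Omega (1+v_\eta)^{\sigma} \, \dd x\dd t\right)^{(2-r)/2},
\end{equation*}
and hence $u_\eta\in L^r((0,T)\times\Omega)$. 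Parabolic $L^r$-maximal regularity for the Neumann heat equation then upgrades \eqref{rivp2} (with $v_\eta^{in}\in W^{1,\infty}$ by \eqref{a2}) to $v_\eta\in L^r(0,T;W^{2,r}(\Omega))$; Sobolev embedding places this in $L^r(0,T;L^{s(r)}(\Omega))$, and interpolation with $v_\eta\in L^\infty(0,T;L^2)$ produces $v_\eta\in L^{\sigma'}((0,T)\times\Omega)$ with some $\sigma'>\sigma$.

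Iterating the two steps yields an increasing sequence $(\sigma_n)$ of integrability exponents; the main obstacle will be a quantitative verification that $\sigma_n\to\infty$, so that any prescribed $q\in(1,\infty)$ is reached in finitely many iterations. This should follow from tracking the recursion $\sigma \mapsto \sigma'$: as $\sigma$ grows, the associated $r$ approaches $2$ and $s(r)$ becomes arbitrarily large (in particular $W^{2,r}\hookrightarrow L^\infty$ as soon as $r>N/2$), which forces $\sigma'/\sigma$ to stay bounded away from $1$ for large $\sigma$ and hence the exponents to diverge.
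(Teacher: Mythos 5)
Your bootstrap cannot close, and the obstruction is quantitative, not just a missing verification. The energy estimate \eqref{a21} together with \eqref{phi1eta} only ever yields $u_\eta\in L^r((0,T)\times\Omega)$ for exponents $r=2\sigma/(\sigma+k)<2$ (and even $r>1$ requires $\sigma>k$, which fails at the first step whenever $k\ge 2(N+2)/N$, while the lemma must hold for arbitrary $k\ge0$). Maximal regularity then gives at best $v_\eta\in L^r(0,T;W^{2,r})$ with $r<2$, and the only time-uniform information available is $v_\eta\in L^\infty(0,T;L^2)$. Interpolating $L^\infty_t L^2_x$ with $L^r_t L^s_x$ (any $s\le\infty$): if $\frac{1}{\sigma'}=\frac{1-\theta}{2}+\frac{\theta}{s}$ and the time exponent requires $\theta\sigma'\le r$, then $\frac1{\sigma'}\ge\frac{1-\theta}{2}$ forces $\sigma'\le r+2<4$. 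So no matter how large $s(r)$ is, the space--time exponent produced by one loop is capped by $r+2$, and with $s(r)=Nr/(N-2r)$ one gets precisely $\sigma'=r(N+4)/N<2(N+4)/N$. The recursion $\sigma\mapsto\sigma'$ therefore has a finite fixed point (below $4$ in every dimension), the sequence $\sigma_n$ does \emph{not} diverge, and already $q=4$ is unreachable; moreover your heuristic ``$W^{2,r}\hookrightarrow L^\infty$ as soon as $r>N/2$'' is vacuous for $N\ge4$ since $r<2\le N/2$.

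The missing idea is the comparison argument of Fujie--Jiang that the paper uses: setting $w_\eta=\mathcal{A}^{-1}u_\eta$, the inequality $u_\eta\gamma_\eta(v_\eta)\le(1+K_0)u_\eta$ and the elliptic comparison principle give $\partial_t w_\eta+u_\eta\gamma_\eta(v_\eta)\le(1+K_0)w_\eta$, hence the pointwise bound $0\le w_\eta(t,x)\le e^{(1+K_0)t}w_\eta(0,x)$, and $\|w_\eta(0)\|_\infty\le C$ follows from $p_0>N/2$, elliptic regularity and \eqref{a2}. This produces an $L^\infty((0,T)\times\Omega)$ bound on the smoothed density $w_\eta$ that is far stronger than anything the entropy estimate can give and is independent of $k$. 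One then applies maximal regularity not to \eqref{rivp2} directly but to the equation satisfied by $z_\eta=\mathcal{A}^{-1}v_\eta$, whose source term is the bounded function $w_\eta$, obtaining $z_\eta\in L^q(0,T;W^{2,q})$ for \emph{every} $q\in(1,\infty)$ and hence $v_\eta=\mathcal{A}z_\eta\in L^q((0,T)\times\Omega)$. Without this (or some equally strong duality-type input) the iteration you propose stalls at a finite exponent and cannot prove the lemma.
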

%%%%%%%%%%%%%%%%

\begin{proof}
We start with the comparison argument introduced in \cite{FuJi2020} and deduce from \eqref{bdeta}, \eqref{rivp}, \eqref{no4}, and the definition~\eqref{opa} of $\mathcal{A}$ that, for $t\ge 0$,
\begin{equation*}
	\mathcal{A}\big( \partial_t w_\eta(t) + u_\eta(t)\tge(v_\eta(t)) \big) = u_\eta(t) \tge(v_\eta(t)) \le (1+K_0) u_\eta(t) = \mathcal{A}\big[ (1+K_0) w_\eta(t) \big] \;\;\text{ in }\;\; \Omega\,,
\end{equation*}
with $\nabla \big( \partial_t w_\eta(t) + u_\eta(t)\tge(v_\eta(t)) - (1+K_0) w_\eta(t) \big)\cdot \mathbf{n} = 0$ on $\partial\Omega$. We then infer from the (elliptic) comparison principle that
\begin{equation*}
	\partial_t w_\eta + u_\eta\tge(v_\eta) \le (1+K_0) w_\eta \;\;\text{ in }\;\; [0,\infty)\times\Omega\,.
\end{equation*}
Hence, since $u_\eta$ and $\tge$ are nonnegative, 
\begin{equation*}
	0 \le w_\eta(t,x) \le e^{(1+K_0)t} w_\eta(0,x)\,, \qquad (t,x)\in [0,\infty)\times \Omega\,.
\end{equation*}
In addition, recalling that $p_0>N/2$, the continuous embedding of $W^{2,p_0}$ in $L^\infty(\Omega)$, elliptic regularity, and \eqref{a2} imply that
\begin{equation*}
	\|w_\eta(0)\|_\infty \le C \|\mathcal{A}^{-1}(u_\eta^{in})\|_{W^{2,p_0}} \le C \|u_\eta^{in}\|_{p_0} \le C\,.
\end{equation*}
Combining the above two estimates leads us to
\begin{equation}
	\sup_{t\in [0,T]} \|w_\eta(t)\|_\infty\le C(T) \label{mk1}
\end{equation}
for each $T>0$.

We next define $z_\eta := \mathcal{A}^{-1}v_\eta$ and deduce from \eqref{rivp} that $z_\eta$ solves
\begin{equation}\label{mk2}
	\begin{split}
		\varepsilon \partial_t z_\eta  - \Delta z_\eta + z_\eta = w_\eta\,,& \qquad (t,x)\in (0,\infty)\times\Omega\,, \\	
		\nabla z_\eta \cdot \mathbf{n} = 0\,,& \qquad (t,x)\in (0,\infty)\times\partial\Omega\,,  \\
		z_\eta(0, \cdot)  = z_\eta^{in} := \mathcal{A}^{-1}v_\eta^{in}\,,& \qquad x\in \Omega\,, 
	\end{split}
\end{equation}
Now, let $q\in (1,\infty)$. We recall that the operator $\mathcal{A}_\varepsilon$ defined by
\begin{equation}
\begin{split}
	D(\mathcal{A}_\varepsilon) & := \big\{ \phi\in W^{2,q}(\Omega)\ :\ \nabla\phi\cdot \mathbf{n} = 0 \;\text{ on }\; \partial\Omega \big\}\,, \\
	\mathcal{A}_\varepsilon \phi & := \frac{1}{\varepsilon} \big( - \Delta \phi + \phi \big)\,, \qquad \phi\in D(\mathcal{A}_\varepsilon)\,,
\end{split} \label{Aeps}
\end{equation}
generates an analytic semigroup  $\left( e^{-t\mathcal{A}_\varepsilon} \right)_{t\ge 0}$ of contractions on $L^q(\Omega)$ \cite[Theorem~7.3.5]{Pazy1983} (note that $\mathcal{A}_1 = \mathcal{A}$, see~\eqref{OpA}). With this notation, a representation formula  for $z_\eta$ can be derived from \eqref{mk2} which reads
\begin{equation}
	\varepsilon z_\eta(t) = \varepsilon e^{-t\mathcal{A}_\varepsilon} z_\eta^{in} + \int_0^t e^{-(t-s)\mathcal{A}_\varepsilon} w_\eta(s)\dd s\,, \qquad t\ge 0\, . \label{mk10}
\end{equation}
On the one hand, we infer from \eqref{mk1} and \cite[Th\'eor\`eme~1]{Lamb1987} that
\begin{equation*}
	\left\| t \mapsto \int_0^t e^{-(t-s)\mathcal{A}_\varepsilon} w_\eta(s)\dd s \right\|_{L^q((0,T);W^{2,q}(\Omega))} \le C(q) \| w_\eta \|_{L^q((0,T)\times \Omega)} \le C(T,q)\,.
\end{equation*}
On the other hand, classical properties of semigroups, elliptic regularity, and \eqref{a2} entail that
\begin{equation*}
	\left\| e^{-t\mathcal{A}_\varepsilon} z_\eta^{in}\right\|_{L^q((0,T);W^{2,q}(\Omega))} \le C(T,q) \|z_\eta^{in}\|_{W^{2,q}} \le C(T,q) \|v_\eta^{in}\|_q \le C(T,q)\,.
\end{equation*}
Consequently,
\begin{equation*}
	\varepsilon \left\| z_\eta \right\|_{L^q((0,T);W^{2,q}(\Omega))} \le C(T,q)\,,
\end{equation*}
an estimate which completes the proof since $v_\eta=\mathcal{A}z_\eta$.
\end{proof}

We next turn to $(u_\eta)_\eta$ and draw the following consequence of Lemma~\ref{lem.a21} and Lemma~\ref{lem.a22}.

%%%%%%%%%%%%%%%%
\begin{lemma}\label{lem.a23}
For all $p\in (1,2)$ and $T>0$, there exists some constant $C(T,p) \ge 0$ such that
\begin{equation}\label{GNdidthat2}
\int_0^T\|u_\eta(t)\|_p^p \dd t   \le C(T,p)\,.
\end{equation}
\end{lemma}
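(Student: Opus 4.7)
The plan is to combine the weighted $L^2$-bound on $u_\eta$ coming from Lemma~\ref{lem.a21} with the $L^q$-bound on $v_\eta$ from Lemma~\ref{lem.a22}, using nothing more than Hölder's inequality and the algebraic estimate \eqref{phi1eta} for $1/\gamma_\eta$.

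Concretely, for $p\in(1,2)$ I would write the pointwise identity
\begin{equation*}
	u_\eta^p = \left( u_\eta^2\,\gamma_\eta(v_\eta)\right)^{p/2}\,\gamma_\eta(v_\eta)^{-p/2}
\end{equation*}
and apply Hölder's inequality on $(0,T)\times\Omega$ with conjugate exponents $2/p$ and $2/(2-p)$:
\begin{equation*}
	\int_0^T\!\!\int_\Omega u_\eta^p\,\dd x\dd t \le \left(\int_0^T\!\!\int_\Omega u_\eta^2\,\gamma_\eta(v_\eta)\,\dd x\dd t\right)^{\!p/2}\!\left(\int_0^T\!\!\int_\Omega \gamma_\eta(v_\eta)^{-p/(2-p)}\,\dd x\dd t\right)^{\!(2-p)/2}.
\end{equation*}

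The first factor on the right-hand side is already controlled by $C(T)^{p/2}$ thanks to \eqref{a21}. For the second factor, I would invoke the uniform lower bound on $\gamma_\eta$ from \eqref{phi1eta}, which yields
\begin{equation*}
	\gamma_\eta(v_\eta)^{-p/(2-p)} \le (3^k K_1)^{p/(2-p)}\,(v_\eta+1)^{kp/(2-p)}\,.
\end{equation*}
Setting $q:=\max\{kp/(2-p),2\}\in(1,\infty)$ and using the elementary inequality $(a+b)^q \le 2^{q-1}(a^q+b^q)$ (together with Jensen's inequality to pass from the exponent $kp/(2-p)$ to $q$), the remaining integral is bounded in terms of $\int_0^T\|v_\eta\|_q^q\,\dd t + T|\Omega|$, which is finite uniformly in $\eta$ by Lemma~\ref{lem.a22}. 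Combining the two factors gives the desired estimate $\int_0^T\|u_\eta(t)\|_p^p\,\dd t\le C(T,p)$.

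There is no real obstacle here: the argument is a short interpolation between the $L^2$-bound weighted by $\gamma_\eta$ and the $L^q$-bound on $v_\eta$ for arbitrarily large $q$, which is exactly why Lemma~\ref{lem.a22} was stated for every $q\in(1,\infty)$. The only minor care needed is to make sure the exponent $kp/(2-p)$ that appears on $v_\eta$ is accommodated by Lemma~\ref{lem.a22}, which is immediate since $p<2$ keeps this exponent finite, and the borderline case $k=0$ is trivial because then $\gamma_\eta$ is bounded below by a positive constant independent of $\eta$.
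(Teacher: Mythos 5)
Your proof is correct and follows essentially the same route as the paper: Hölder with exponents $2/p$ and $2/(2-p)$ applied to $u_\eta^p=\big(u_\eta^2\gamma_\eta(v_\eta)\big)^{p/2}\gamma_\eta(v_\eta)^{-p/2}$, the lower bound \eqref{phi1eta}, the weighted $L^2$ bound from \eqref{a21}, and Lemma~\ref{lem.a22} with a suitable exponent (the paper uses $q=pk/(2-p)$, while you take $q=\max\{pk/(2-p),2\}$). The only cosmetic difference is that you apply Hölder once on the space--time cylinder, whereas the paper applies it first in space and then again in time, which yields the same estimate.
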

%%%%%%%%%%%%%%%%

\begin{proof}
By \eqref{phi1eta} and H\"older's inequality,
\begin{align*}
	\|u_\eta\|_p^p & = \int_{\Omega} \left( u_\eta\, \sqrt{\tge(v_\eta)}  \right)^p \, \tge(v_\eta)^{-p/2} \dd x  \\
	& \le  \bigg( \int_{\Omega}  u_\eta^2\, \tge(v_\eta) \dd x \bigg)^{p/2}\,  \bigg( \int_{\Omega}  \tge(v_\eta)^{ - p/(2-p) } \dd x \bigg)^{(2-p)/2} \\
	& \le 3^k K_1 \left\| u_\eta \sqrt{\tge(v_\eta)} \right\|_2^{p} \left[ \int_\Omega (1+v_\eta)^{pk/(2-p)} \dd x \right]^{(2-p)/2} \\
	& \le C(p) \left\| u_\eta \sqrt{\tge(v_\eta)} \right\|_2^{p} \left( 1 + \| v_\eta\|_{pk/(2-p)}^{pk/2} \right)\,.
\end{align*}
Integrating the above inequality with respect to time over $(0,T)$ and using H\"older's inequality gives
\begin{align*}
	\int_0^T \|u_\eta\|_p^p \dd t & \le C(p) \int_0^T \left\| u_\eta \sqrt{\tge(v_\eta)} \right\|_2^p \left( 1 + \| v_\eta\|_{pk/(2-p)}^{pk/2} \right)\dd t \\
	& \le C(p) \left( \int_0^T \left\| u_\eta \sqrt{\tge(v_\eta)} \right\|_2^{2} \dd t \right)^{p/2} \left( \int_0^T \left( 1 + \| v_\eta\|_{pk/(2-p)}^{pk/2} \right)^{\frac{2}{(2-p)}}\dd t \right)^{(2-p)/2} \\
	& \le C(p) \left( \int_0^T \left\| u_\eta \sqrt{\tge(v_\eta)} \right\|_2^{2} \dd t \right)^{p/2} \left( \int_0^T \left( 1 + \| v_\eta\|_{pk/(2-p)}^{pk/(2-p)} \right)\dd t \right)^{(2-p)/2}\,.
\end{align*}
Lemma~\ref{lem.a23} then readily follows from the above estimate due to \eqref{a21} and Lemma~\ref{lem.a22} (with $q=pk/(2-p)$).
\end{proof}

Exploiting the outcome of Lemma~\ref{lem.a23} provides additional estimates on $(v_\eta)_\eta$.
	
\begin{lemma}\label{lem.a26}
	Let $q\in (1,2)$ and $T>0$ and assume that $v^{in}\in W^{1,q}(\Omega)$. Then there exists some constant $C(T,q)\ge 0$ such that
	\begin{equation*}
		\sup_{t\in [0,T]} \|v_\eta(t)\|_{W^{1,q}} \le C(T,q)\,.
	\end{equation*}
\end{lemma}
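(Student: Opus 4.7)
The plan is to bypass the direct semigroup-smoothing approach, which breaks down since $u_\eta$ is only known to lie in $L^p((0,T)\times\Omega)$ for $p\in(1,2)$ — too small to handle the kernel $(t-s)^{-1/2}$ via Young's convolution in $L^\infty_t$. Instead, I will integrate by parts in time in the Duhamel formula, exploiting the algebraic relation $u_\eta = \mathcal{A} w_\eta = \varepsilon \mathcal{A}_\varepsilon w_\eta$. Starting from $\varepsilon v_\eta(t) = \varepsilon e^{-t\mathcal{A}_\varepsilon} v_\eta^{in} + \int_0^t e^{-(t-s)\mathcal{A}_\varepsilon} u_\eta(s)\dd s$, substitute this identity and use $\mathcal{A}_\varepsilon e^{-(t-s)\mathcal{A}_\varepsilon} = \partial_s e^{-(t-s)\mathcal{A}_\varepsilon}$ to integrate by parts in~$s$, arriving at
$$v_\eta(t) = e^{-t\mathcal{A}_\varepsilon}\bigl[v_\eta^{in} - w_\eta(0)\bigr] + w_\eta(t) - W(t)\,, \qquad W(t) := \int_0^t e^{-(t-s)\mathcal{A}_\varepsilon} \partial_s w_\eta(s)\dd s\,,$$
where $W$ solves $\varepsilon \partial_t W - \Delta W + W = \varepsilon \partial_s w_\eta$ with $W(0)=0$ and homogeneous Neumann boundary data. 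The trick replaces the singular action of $\mathcal{A}_\varepsilon$ on the kernel by a time derivative falling on the regular quantity $w_\eta$.

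Each of the three contributions will be bounded in $L^\infty((0,T); H^1(\Omega))$, which embeds continuously into $L^\infty((0,T); W^{1,q}(\Omega))$ for $q\le 2$ on the bounded domain $\Omega$. For $w_\eta$ itself, testing \eqref{no4} against $w_\eta$ gives
$$\|\nabla w_\eta(t)\|_2^2 + \|w_\eta(t)\|_2^2 = \io u_\eta(t)\,w_\eta(t)\dd x \le \|u_\eta(t)\|_1\,\|w_\eta(t)\|_\infty\,,$$
which is uniformly controlled by mass conservation~\eqref{a3} and the $L^\infty$-bound~\eqref{mk1}. For $W$, I will invoke $L^2$-parabolic maximal regularity: the source $\partial_s w_\eta = (\IA - \mathrm{I})(u_\eta \tge(v_\eta))$, obtained by differentiating $\mathcal{A} w_\eta = u_\eta$ in time and using \eqref{rivp1}, lies in $L^2((0,T); L^2(\Omega))$ uniformly in $\eta$, thanks to the bound $\tge\le K_0+1$ from \eqref{bdeta} and the dissipation estimate $\int_0^T \io u_\eta^2 \tge(v_\eta)\dd x \dd t < \infty$ of Lemma~\ref{lem.a21}; maximal regularity then delivers $W \in C([0,T]; H^1(\Omega))$ with a uniform bound. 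Finally, the homogeneous semigroup term $e^{-t\mathcal{A}_\varepsilon}[v_\eta^{in} - w_\eta(0)]$ is handled by the standard boundedness of $(e^{-t\mathcal{A}_\varepsilon})_{t\ge 0}$ on $W^{1,q}(\Omega)$ together with the uniform bounds $\|v_\eta^{in}\|_{W^{1,q}} \le 1+\|v^{in}\|_{W^{1,q}}$ from \eqref{a2b} and $\|w_\eta(0)\|_{W^{1,q}} \le C\|u^{in}\|_{p_0}$ (obtained via elliptic regularity together with the Sobolev embedding $W^{2,p_0}\hookrightarrow W^{1,q}$, valid since $p_0 > N/2$).

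The hardest part is the obstruction just described: the $L^p((0,T)\times\Omega)$-estimate from Lemma~\ref{lem.a23} is, for every admissible~$p$, insufficient to render the gradient smoothing $\|\nabla e^{-(t-s)\mathcal{A}_\varepsilon}\cdot\|_{q\to q}\le C(t-s)^{-1/2}$ usable in a Young/H\"older framework aimed at an $L^\infty_t$ bound, since the requisite condition~$p>2$ lies just outside the range provided. The integration-by-parts identity of the first paragraph is what circumvents this, converting the problematic convolution into one against~$\partial_s w_\eta$, whose $L^2((0,T);L^2)$-regularity is precisely what the weighted dissipation of Lemma~\ref{lem.a21} delivers. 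Once this conversion is made, the remaining estimates reduce to routine applications of $L^2$-maximal regularity and semigroup theory.
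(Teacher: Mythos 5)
Your proof is correct, but it follows a genuinely different route from the paper's. The paper keeps the Duhamel formula with $u_\eta$ itself and \emph{does} make the $(t-s)^{-1/2}$-smoothing work: it interpolates $\|u_\eta\|_q$ between the time-uniform mass bound $\|u_\eta\|_1=m|\Omega|$ and the space-time $L^p$ bound of Lemma~\ref{lem.a23} (choosing $p\in(q,2)$, whence the restriction $q<2$), so that after H\"older in time the kernel is raised to a power $\tfrac{\omega}{2(\omega-1)}<1$ and the resulting time exponent $\tfrac{\omega}{\omega-1}$ falls back inside the admissible range of Lemma~\ref{lem.a23}. So the obstruction you describe as fatal is in fact surmountable, and your opening diagnosis of the paper's method is not accurate --- though this does not affect the validity of your own argument. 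What you do instead is substitute $u_\eta=\varepsilon\mathcal{A}_\varepsilon w_\eta$ and integrate by parts in the Duhamel integral, which trades the convolution against $u_\eta$ for one against $\partial_t w_\eta=(\IA-\mathrm{I})(u_\eta\tge(v_\eta))$; the latter is uniformly in $L^2((0,T);L^2)$ by \eqref{bdeta} and Lemma~\ref{lem.a21} (this identity is at the same level of rigor as \eqref{kivp0}, which the paper itself uses), $w_\eta$ is controlled in $L^\infty_t H^1_x$ via \eqref{a3} and \eqref{mk1} (or directly from the proof of Lemma~\ref{lem.a21}), and the zero-initial-data piece $W$ is handled by $L^2$ maximal regularity or, more simply, by testing $\varepsilon\partial_t W+\mathcal{A}W=\varepsilon\partial_t w_\eta$ with $\partial_t W$. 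Your route bypasses Lemmas~\ref{lem.a22}--\ref{lem.a23} entirely, uses only the $L^2$ dissipation estimate, and actually yields slightly more: apart from the semigroup term carrying the initial data (bounded in $W^{1,q}$ via \eqref{a2b} and $\|w_\eta(0)\|_{W^{2,p_0}}\le C$ with $W^{2,p_0}\hookrightarrow W^{1,q}$ since $p_0>N/2$), all contributions are uniformly bounded in $L^\infty((0,T);H^1(\Omega))$, so with $v^{in}\in H^1$ you would even get a uniform $L^\infty_t H^1_x$ bound, whereas in the paper's argument $q<2$ is structurally needed. The price is the extra layer of justification for the vector-valued integration by parts and for the identity defining $\partial_t w_\eta$, both of which are routine here since $u_\eta(s)\in L^2(\Omega)$ for a.e.\ $s$.
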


\begin{proof}
Recalling the notation introduced in \eqref{Aeps}, it follows from \eqref{rivp} that 
\begin{equation*}
	\varepsilon v_\eta(t) = \varepsilon e^{-t\mathcal{A}_\varepsilon} v_\eta^{in} + \int_0^t e^{-(t-s)\mathcal{A}_\varepsilon} u_\eta(s) \dd s\,, \qquad t\ge 0\,.
\end{equation*}	
Using classical properties of the semigroup $(e^{-t\mathcal{A}_\varepsilon)})_{t\ge 0}$, see \cite[V.Theorem~2.1.3]{Aman1995} for instance, along with \eqref{a2b}, we obtain
\begin{equation}
	\varepsilon \|v_\eta(t)\|_{W^{1,q}} \le C(q) + C(q) \int_0^t (t-s)^{-1/2} \|u_\eta(s)\|_q \dd s\,, \qquad t\ge 0\,. \label{mk5}
\end{equation}
	
We next fix $p\in (q,2)$ and set
\begin{equation*}
	\omega := 1 + \frac{q(p-1)}{p(q-1)} \in \left( 2 , \frac{2q(p-1)}{p(q-1)} \right)\,.
\end{equation*}
We infer from \eqref{a1} and H\"older's inequality that
\begin{equation*}
	\|u_\eta\|_q \le \|u_\eta\|_p^{\frac{p(q-1)}{q(p-1)}} \|u_\eta\|_1^{\frac{p-q}{q(p-1)}} \le C(p,q) \|u_\eta\|_p^{\frac{1}{\omega-1}}\,.
\end{equation*} 
Together with H\"older's inequality, the above inequality ensures that, for $t>0$,
\begin{align*}
	\int_0^t (t-s)^{-1/2} \|u_\eta(s)\|_q \dd s & \le \left( \int_0^t (t-s)^{-\frac{\omega}{2(\omega-1)}} \dd s \right)^{(\omega-1)/\omega} \left( \int_0^t \|u_\eta(s)\|_q^\omega \dd s \right)^{1/\omega} \\
	& \le C(p,q) t^{\frac{\omega-2}{2(\omega-1)}} \left( \int_0^t \|u_\eta(s)\|_p^{\frac{\omega}{\omega-1}} \dd s \right)^{1/\omega}\,.
\end{align*}
Since both $\omega/(\omega-1)$ and $p$ lie in $(1,2)$, we deduce from Lemma~\ref{lem.a23} that, for $T>0$ and $t\in [0,T]$, 
\begin{equation*}
	\int_0^t (t-s)^{-1/2} \|u_\eta(s)\|_q \dd s \le C(T,p,q)\,.
\end{equation*}
Inserting the above estimate in \eqref{mk5} completes the proof.
\end{proof}

 \subsection{Compactness and convergence}
 We now collect the estimates that are uniform with respect to $\eta\in (0,1)$, which will prove useful when passing to the limit $\eta \rightarrow 0$.

%%%%%%%%%%%%%%%%
\begin{proposition}\label{prop.un0}
Let $T>0$ , $q\in (1,\infty)$, and $p\in (2N/(N+2),2)$. There are $C_0(T)>0$, $C_1(T,q)>0$, and $C_2(T,p)>0$ such that
\begin{subequations}\label{es.un0}
\begin{eqnarray}
\langle u_\eta(t) \rangle = m\,, \qquad 0\le \langle v_\eta(t) \rangle \le \max\{ \langle v^{in} \rangle , m \}\,,&  \qquad t\in [0,T]\,, \label{es.un01} \\
\|\nabla \mathcal{K}(u_\eta(t)-m)\|_2^2 \le 2 C_0(T)\,,& \qquad t\in [0,T]\,,\label{es.un02} \\
\|v_\eta(t)\|_{2}^2 \le C_0(T)\,,&  \qquad t\in [0,T]\, , \label{es.un04b}\\
\int_0^T \|v_\eta(s)\|_{H^1}^2 \ \mathrm{d}s \le C_0(T)\,,& \label{es.un04}\\
\int_0^T \left\|(u_\eta(s)-v_\eta(s)) \sqrt{\gamma_\eta (v_\eta(s))} \right\|_2^2 \ \mathrm{d}s \le C_0(T)\,,& \label{es.un03}\\
\int_0^T \left\| u_\eta(s) \sqrt{\gamma_\eta(v_\eta(s))} \right\|_2^2 \ \mathrm{d}s \le C_0(T)\,,& \label{es.un06a}\\
\int_0^T \|\partial_t \mathcal{K} (u_\eta(s)-m) \|_2^2 \ \mathrm{d}s \le C_0(T)\,,& \label{es.un06b}\\
\int_0^T \|v_\eta(s)\|_q^q \dd s \le C_1(T,q)\,, & \label{mk6} \\
\int_0^T \|u_\eta(s)\|_p^p\ \mathrm{d}s \le C_2(T,p)\,,& \label{es.un07}	\\
\int_0^T \|\partial_t v_\eta(s)\|_{(H^1)'}^p\ \mathrm{d}s \le C_2(T,p)\,.& \label{es.un05} 
\end{eqnarray}
\end{subequations}
\end{proposition}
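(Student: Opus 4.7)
The strategy is to systematically identify each of the bounds in \eqref{es.un0} as either the statement of one of Lemmas~\ref{lem.a2}, \ref{lem.a21}, \ref{lem.a22}, and~\ref{lem.a23}, or as a mild consequence of them combined with the equations \eqref{rivp1}--\eqref{rivp2} and the uniform bound $\tge \le K_0 + 1$ from Lemma~\ref{lem.gameta}. Most of the work has already been done; what remains is essentially bookkeeping together with two short derivations, for \eqref{es.un06b} and \eqref{es.un05}.

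Concretely, \eqref{es.un01} is precisely Lemma~\ref{lem.a2}, and the bounds \eqref{es.un02}, \eqref{es.un04b}, \eqref{es.un04}, and \eqref{es.un06a} all follow from Lemma~\ref{lem.a21}, after invoking the equivalence between the norms \eqref{H1pnorm} and \eqref{H1pnormA} (since $w_\eta = \mathcal{A}^{-1}u_\eta$) in order to transfer the control on $\|w_\eta\|_{H^1}$ into the control on $\|\nabla\mathcal{K}(u_\eta - m)\|_2$. For \eqref{es.un03} I would apply the triangle inequality to $(u_\eta - v_\eta)\sqrt{\tge(v_\eta)}$, controlling the first piece via \eqref{es.un06a} and the second via \eqref{bdeta} combined with \eqref{es.un04b}. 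Estimate \eqref{mk6} is Lemma~\ref{lem.a22}, and \eqref{es.un07} is Lemma~\ref{lem.a23}.

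For \eqref{es.un06b}, I would apply $\mathcal{K}$ to \eqref{rivp1}: since $\langle u_\eta(t) \rangle = m$ by \eqref{es.un01}, one obtains, as in \eqref{ivp1K},
\begin{equation*}
\partial_t \mathcal{K}(u_\eta - m) = \langle u_\eta \tge(v_\eta) \rangle - u_\eta \tge(v_\eta)\,,
\end{equation*}
and the desired $L^2((0,T)\times\Omega)$ estimate is immediate from \eqref{bdeta} together with \eqref{es.un06a}, since $\|u_\eta \tge(v_\eta)\|_2 \le \sqrt{K_0+1}\,\|u_\eta \sqrt{\tge(v_\eta)}\|_2$ and the spatial averaging is an $L^2$-contraction.

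The only genuinely delicate point is \eqref{es.un05}. I would rewrite \eqref{rivp2} as $\varepsilon \partial_t v_\eta = \Delta v_\eta - v_\eta + u_\eta$ and measure each term in $(H^1)'(\Omega)$. The contributions from $\Delta v_\eta$ and $v_\eta$ belong to $L^2((0,T);(H^1)')$ by \eqref{es.un04} and \eqref{es.un04b}, hence to $L^p((0,T);(H^1)')$ since $p < 2$. The source term $u_\eta$ is handled through the continuous Sobolev embedding $L^p(\Omega) \hookrightarrow (H^1(\Omega))'$, which holds precisely when $p > 2N/(N+2)$; this is exactly the reason for the lower bound imposed on $p$ in the statement of the proposition, and the conclusion then follows from \eqref{es.un07}. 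The main obstacle, if any, is just making sure the Sobolev exponent in this last step aligns with the prescribed range of $p$; everything else is a reorganization of the previously established lemmas.
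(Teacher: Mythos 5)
Your proposal is correct and follows essentially the same route as the paper: each bound is read off from Lemmas~\ref{lem.a2}--\ref{lem.a23}, \eqref{es.un06b} is obtained from the identity $\partial_t \mathcal{K}(u_\eta-m) + u_\eta\gamma_\eta(v_\eta) = \langle u_\eta\gamma_\eta(v_\eta)\rangle$ together with \eqref{bdeta} and \eqref{es.un06a}, \eqref{es.un03} by the triangle inequality with \eqref{es.un04b}, and \eqref{es.un05} by the duality/embedding argument (the paper phrases it as $H^1(\Omega)\hookrightarrow L^{p/(p-1)}(\Omega)$, which is exactly your $L^p(\Omega)\hookrightarrow (H^1(\Omega))'$ and explains the restriction $p>2N/(N+2)$). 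No gaps to report.
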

%%%%%%%%%%%%%%%%

\begin{proof}

The estimate \eqref{es.un01} is given by Lemma~\ref{lem.a2}, the estimates~\eqref{es.un04b}, \eqref{es.un04} and~\eqref{es.un06a} are given by Lemma~\ref{lem.a21} and the estimates~\eqref{mk6} and~\eqref{es.un07} are given by Lemma~\ref{lem.a22} and Lemma~\ref{lem.a23}, respectively. Estimate~\eqref{es.un02} is a consequence of Lemma~\ref{lem.a21}, the mass estimate~\eqref{es.un01} and the definition of the $(H^1)'$ norm in \eqref{H1pnorm}. Using again~\eqref{H1pnorm} and~\eqref{a1}, one can deduce from \eqref{rivp1} and \eqref{rivp3} that
\begin{equation}
	\partial_t \mathcal{K}(u_\eta-m) + u_\eta \gamma_\eta(v_\eta)  = \langle u_\eta \gamma_\eta(v_\eta) \rangle\,, \qquad (t,x)\in (0,\infty)\times\Omega\,, \label{kivp0}
\end{equation}
so that estimate~\eqref{es.un06b} is a consequence of \eqref{es.un06a} and the (uniform) upper bound \eqref{bdeta} on $\gamma_\eta$. It next follows from \eqref{rivp2}, H\"older's inequality, and the continuous embedding of $H^1(\Omega)$ in $L^{p/(p-1)}(\Omega)$ that, for $\psi\in H^1(\Omega)$,
\begin{align*}
	\varepsilon \left| \left\langle \partial_t v_\eta , \psi \right\rangle_{(H^1)',H^1} \right| & = \left| \int_\Omega \nabla v_\eta \cdot \nabla \psi \dd x +\int_\Omega v_\eta \psi \dd x - \int_\Omega u_\eta \psi \dd x \right| \\
	& \le \|v_\eta\|_{H^1} \|\psi\|_{H^1} + \|u_\eta\|_p \|\psi\|_{p/(p-1)} \\ 
	& \le C(p) \left( \|v_\eta\|_{H^1} + \|u_\eta\|_p \right) \|\psi\|_{H^1}\,.
\end{align*}
Estimate~\eqref{es.un05} is then a consequence of \eqref{es.un04}, \eqref{es.un07}, and the above inequality by a duality argument. Finally, estimate~\eqref{es.un03} is obtained thanks to \eqref{es.un04b}, \eqref{es.un06a}, and the upper bound \eqref{bdeta} on $\gamma_\eta$.
\end{proof}
\bigskip

We are now ready to pass to the limit $\eta \rightarrow 0$ and therefore prove the existence of a global weak solution to the system \eqref{ivp}.

\begin{proof}[End of the proof of \Cref{theo26}] Thanks to the uniform estimates collected in \Cref{prop.un0}, we can extract a sequence $(u_{\eta_n},v_{\eta_n})_{n\ge 1}$ such that, for all $T>0$, $q\in (1,\infty)$, and $p\in (2N/(N+2),2)$,
\begin{subequations}\label{cv0}
\begin{align}
u_{\eta_n}  & \rightharpoonup u && \text{in }\; L^p((0,T)\times \Omega), \label{cvu0}\\
\mathcal{K}(u_{\eta_n}-m) & \mathop{\rightharpoonup}^* \mathcal{K}(u-\langle u \rangle) &&\text{in }\; L^\infty((0,T);H^1(\Omega)), \label{cvku0} \\
\partial_t \mathcal{K}(u_{\eta_n}-m)&\rightharpoonup  \partial_t \mathcal{K}(u-\langle u \rangle) &&\text{in }\; L^2((0,T)\times\Omega)), \label{cvdtku0}\\
v_{\eta_n}& \rightharpoonup v &&\text{in }\; L^2((0,T);H^1(\Omega)) \cap L^q((0,T)\times\Omega), \label{cvv0}\\
v_{\eta_n}& \mathop{\rightharpoonup}^* v &&\text{in }\; L^\infty((0,T);L^2(\Omega)), \label{cvv20}\\
\partial_t v_{\eta_n}&\rightharpoonup  \partial_t v &&\text{in }\;  L^p((0,T);(H^1)'(\Omega). \label{cvdtv0}
\end{align}
\end{subequations}
Thanks to \eqref{es.un02}, \eqref{es.un04b}, \eqref{es.un04}, \eqref{es.un06b}, and \eqref{es.un05}, we can furthermore apply Aubin-Lions-Simon Theorem (see \cite[Corollary~4]{simon1986compact}), and extract further subsequences $(\mathcal{K}(u_{\eta_n}-m))_{n\ge 1}$ and $(v_{\eta_n})_{n\ge 1}$ that converges in a strong sense,
\begin{align}
\label{strongcv10}
\mathcal{K}(u_{\eta_n}-m) & \rightarrow \mathcal{K}(u-\langle u \rangle) &&\text{in }\; C([0,T];L^2(\Omega)),\\
v_{\eta_n}  & \rightarrow v && \text{in }\; C([0,T];(H^1)'(\Omega))\cap L^2((0,T)\times\Omega) \;\text{ and a.e. in }\; (0,T)\times\Omega\,. \label{strongcv20}
\end{align}

Next, since $\partial_t u_{\eta_n} = \Delta(u_{\eta_n} \gamma_{\eta_n}(v_{\eta_n}))$ by \eqref{rivp1}, we infer from Lemma~\ref{lem.gameta} and \eqref{es.un06a} that $\big(\partial_t u_{\eta_n}\big)_{n\ge 1}$ is bounded in $L^2((0,T);(H^2)'(\Omega))$, while \eqref{es.un02} guarantees that $(u_{\eta_n})_{n\ge 1}$ is bounded in $L^\infty((0,T);(H^1)'(\Omega))$. Another application of \cite[Corollary~4]{simon1986compact} implies that, after possibly extracting another subsequence, 
\begin{equation}
u_{\eta_n} \rightarrow u \quad \text{in }\; C([0,T];(H^2)'(\Omega)). \label{weakstrongcv0}
\end{equation}
Since $x\mapsto 1$ belongs to $H^2(\Omega)$, an immediate consequence of \eqref{es.un01} and  \eqref{weakstrongcv0} is
\begin{equation}
\langle u(t) , 1 \rangle_{(H^2)',H^2} = \lim_{n\to\infty} \langle u_{\eta_n}(t) , 1 \rangle_{(H^2)',H^2} = |\Omega| \lim_{n\to\infty} \langle u_{\eta_n}(t) \rangle = m |\Omega|\,, \qquad t\in [0,T]\,. \label{pseudo_mass_con0}
\end{equation}
Introducing the space $C([0,T];w-(H^1)'(\Omega))$ of functions from $[0,T]$ to $(H^1)'(\Omega)$ which are continuous with respect to time for the weak topology of $(H^1)'(\Omega)$, we recall that
\begin{equation*}
L^\infty((0,T);(H^1)'(\Omega))\cap C([0,T];(H^2)'(\Omega)) \subset C([0,T];w-(H^1)'(\Omega))\,,
\end{equation*}
and deduce from \eqref{cvku0} and \eqref{pseudo_mass_con0} that
\begin{equation}\label{mass_con0}
\langle u(t) \rangle = m\,,  \qquad t\in [0,T]\,.
\end{equation}
Furthermore, $u\in L^1((0,T)\times \Omega)$ by \eqref{cvu0}. Consequently, $u(t)$ belongs to $L^1(\Omega)$ for a.e. $t\in [0,T]$ which ensures, together with \eqref{mass_con0} and the nonnegativity of $u$, that 
\begin{equation}
u\in L^\infty((0,T);L^1(\Omega))\,. \label{mk7}
\end{equation}
Recalling \eqref{cvu0}--\eqref{cvdtv0} and \eqref{mk7}, we have thus shown that $(u,v)$ satisfies the regularity properties required in Theorem~\ref{theo26}.

We next identify the weak limits of $\big(u_{\eta_n}\sqrt{\gamma_{\eta_n}(v_{\eta_n})}\big)_{n\ge 1}$ and  $\big(u_{\eta_n}\gamma_{\eta_n}(v_{\eta_n})\big)_{n\ge 1}$. To this end, we note that the uniform convergence of $(\gamma_{\eta_n})_{n\ge 1}$ to $\gamma$ on compact subsets of $[0,\infty)$, the bound from Lemma \ref{lem.gameta}, the a.e. convergence \eqref{strongcv20} of $(v_{\eta_n})_{n\ge 1}$, and Lebesgue's convergence theorem imply that, for all $q\in [1,\infty)$,
\begin{equation}
\gamma_{\eta_n}(v_{\eta_n}) \rightarrow \gamma(v) \quad\text{ in }\; L^q((0,T)\times\Omega) \;\text{ and a.e. in }\; (0,T)\times\Omega\,. \label{cgv0}
\end{equation}
It then follows from \eqref{cvu0} and \eqref{cgv0} that 
\begin{align*}
u_{\eta_n} \sqrt{\gamma_{\eta_n}(v_{\eta_n})} &\rightharpoonup u \sqrt{\gamma(v)} &&\text{in }\;  L^1((0,T)\times\Omega)\,, \\
u_{\eta_n} \gamma_{\eta_n}(v_{\eta_n}) &\rightharpoonup u \gamma(v) &&\text{in }\;  L^1((0,T)\times\Omega)\,.
\end{align*}
Since these two sequences are bounded in $L^2((0,T)\times\Omega)$ by \Cref{lem.gameta} and \eqref{es.un06a}, we conclude that
\begin{equation}
\begin{split}
u_{\eta_n} \sqrt{\gamma_{\eta_n}(v_{\eta_n})} \rightharpoonup u \sqrt{\gamma(v)} & \quad\text{in }\;  L^2((0,T)\times\Omega)\,, \\
u_{\eta_n} \gamma_{\eta_n}(v_{\eta_n}) \rightharpoonup u \gamma(v) & \quad\text{in }\;  L^2((0,T)\times\Omega)\,.
\end{split} \label{cvugv0}
\end{equation}

\medskip

 Writing now a very weak formulation of system~\eqref{rivp} like in \Cref{dw2}, we can pass to the limit when $\eta \to 0$ and get that $(u,v)$ indeed are very weak solutions of system~\eqref{ivp} in the sense of \Cref{dw2}.
 
 \medskip
 
Finally, assuming additionally that $v^{in}\in W^{1,q}(\Omega)$ for some $q\in (1,2)$, so that the family $(v_\eta^{in})$ satisfies \eqref{a2b}, we infer from \eqref{strongcv20} and \Cref{lem.a26} that $v\in L^\infty((0,T);W^{1,q}(\Omega))$, which completes the proof.
\end{proof}

%%%%%%%%%%%%%%%%
%%%%%%%%%%%%%%%%
\section{Long-term behavior and a Lyapunov functional}\label{sec.ltblf}
%%%%%%%%%%%%%%%%
%%%%%%%%%%%%%%%%

This section is devoted to the existence of a Lyapunov functional and the proof of Theorem~\ref{thm.lt}. Let $\Omega$ be a smooth bounded domain of $\RR^N$, with $N\ge 2$ and $\varepsilon>0$. Assume that $\gamma\in C([0,\infty))\cap C^3((0,\infty))$ satisfies \eqref{mon} and consider nonnegative initial conditions $(u^{in},v^{in})\in W^{1,r}(\Omega;\mathbb{R}^2)$ for some $r>N$. It then follows from \cite{JLZ2022, FuSe2021} that there is a unique global classical solution $(u,v)$ to~\eqref{ivp}.  Setting $m := \langle u^{in}\rangle$, it readily follows from~\eqref{ivp} and the nonnegativity of $(u,v)$ that
\begin{subequations}\label{Z0}
\begin{equation}
	\| u(t)\|_1 = |\Omega| \langle u(t) \rangle = m |\Omega|\,, \qquad t\ge 0\,, \label{Z0u}  
\end{equation}
and
\begin{equation}
	\varepsilon \frac{\mathrm{d}}{\mathrm{d}t} \|v(t)\|_1 + \|v\|_1 = m|\Omega|\,, \qquad t\ge 0\,, \label{lf1}
\end{equation}
from which we deduce that
\begin{equation}
	\|v(t)\|_1 = \|u^{in}\|_1 \big( 1-e^{-t/\varepsilon} \big) + \|v^{in}\|_1 e^{-t/\varepsilon} \le \max\big\{ \|u^{in}\|_1 , \|v^{in}\|_1 \big\}\,, \qquad t\ge 0\,.  \label{Z0v}
\end{equation}
\end{subequations}

%%%%%%%%%%%%%%%%
%%%%%%%%%%%%%%%%
\subsection{A Lyapunov functional}\label{sec.lf}
%%%%%%%%%%%%%%%%
%%%%%%%%%%%%%%%%

%%%%%%%%%%%%%%%%
\begin{lemma}\label{lem.lf1}
The function $G_0$ defined in~\eqref{G0} is nonnegative and convex on $(0,\infty)$, and 
\begin{equation}
	\frac{\dd}{\dd t} \mathcal{L}_0(u(t),v(t)) + \mathcal{D}_0(u(t),v(t)) = 0\,, \qquad t>0\,, \label{Z1} 
\end{equation}
recalling that $\mathcal{L}_0$ and $\mathcal{D}_0$ are both nonnegative and defined in \eqref{L0} and \eqref{D0}, respectively. In particular, 
\begin{equation}
	\mathcal{L}_0(u(t),v(t)) + \int_0^t \mathcal{D}_0(u(s),v(s)) \dd s \le \mathcal{L}_0(u^{in},v^{in})\,, \qquad t\ge 0\,. \label{Z2}
\end{equation}
\end{lemma}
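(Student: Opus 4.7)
The plan is to attack the three assertions in the natural order: first the structural properties of $G_0$, then the dissipation identity by direct differentiation, and finally the integrated form as an immediate consequence.

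First I would verify the convexity and nonnegativity of $G_0$. Differentiating the defining relation for $G_0'$ gives $G_0''(z) = 2\gamma(z) + (2z-m)\gamma'(z) = 2[\gamma(z) + z\gamma'(z)] - m\gamma'(z)$. The monotonicity assumption \eqref{mon} says exactly that $(z\gamma(z))' = \gamma(z) + z\gamma'(z) \ge 0$ and $\gamma'(z) \le 0$, so $G_0''(z) \ge 0$ and $G_0$ is convex on $(0,\infty)$. A direct substitution shows $G_0'(m) = 2m\gamma(m) - m\gamma(m) - m\gamma(m) = 0$, so together with $G_0(m) = 0$ the point $z=m$ is the global minimum of $G_0$, giving $G_0 \ge 0$. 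The nonnegativity of $\mathcal{L}_0$ then follows. For $\mathcal{D}_0$, the first term is nonnegative by convexity of $G_0$, the second because $\gamma > 0$, and the third because $z \mapsto z\gamma(z)$ is nondecreasing, so $(v-m)$ and $(v\gamma(v) - m\gamma(m))$ have the same sign.

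The core step is to compute $\frac{d}{dt}\mathcal{L}_0(u(t),v(t))$ along classical solutions and to recognize $-\mathcal{D}_0$. For the first summand I would use the reformulation \eqref{ivp1K}, namely $\partial_t \mathcal{K}(u-m) = \langle u\gamma(v)\rangle - u\gamma(v)$, together with integration by parts and $-\Delta \mathcal{K}(u-m) = u-m$, to obtain
\[
\frac{d}{dt}\,\tfrac{1}{2}\|\nabla \mathcal{K}(u-m)\|_2^2 = \int_\Omega (u-m)\bigl[\langle u\gamma(v)\rangle - u\gamma(v)\bigr]\dd x = -\int_\Omega (u-m)\,u\gamma(v)\dd x,
\]
since the mass conservation \eqref{Z0u} annihilates the term involving $\langle u\gamma(v)\rangle$. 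For the second summand, testing $\varepsilon \partial_t v = \Delta v - v + u$ against $G_0'(v)$ and integrating by parts (using the Neumann condition on $v$) produces
\[
\varepsilon \frac{d}{dt}\int_\Omega G_0(v)\dd x = -\int_\Omega G_0''(v)|\nabla v|^2 \dd x + \int_\Omega G_0'(v)(u-v)\dd x.
\]

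The main obstacle, and essentially the only place where computation is required, is the algebraic identity
\[
-\int_\Omega (u-m)u\gamma(v)\dd x + \int_\Omega G_0'(v)(u-v)\dd x = -\int_\Omega (u-v)^2 \gamma(v)\dd x - \int_\Omega (v-m)(v\gamma(v) - m\gamma(m))\dd x.
\]
This is where the precise choice $G_0'(z) = 2z\gamma(z) - m\gamma(z) - m\gamma(m)$ is tailored: expanding the left-hand side, the cross term $m\int_\Omega u\gamma(v)\dd x$ cancels, and using $\int_\Omega u\dd x = m|\Omega|$ to handle the constant $-m\gamma(m)\int_\Omega(u-v)\dd x$ converts it into the $-m^2\gamma(m)|\Omega| + m\gamma(m)\int_\Omega v\dd x$ contributions coming from the right-hand side after expansion of $(u-v)^2 \gamma(v)$ and $(v-m)(v\gamma(v)-m\gamma(m))$. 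Once the bookkeeping matches, \eqref{Z1} follows.

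Finally, the integrated inequality \eqref{Z2} is obtained by integrating \eqref{Z1} from $0$ to $t$ and discarding the nonnegative dissipation to get an equality; the inequality sign in \eqref{Z2} comes from the fact that only a bound on $\int_0^t \mathcal{D}_0\dd s$ from above is retained. The justification of the differentiability step and of all integrations by parts is granted by the regularity $(u,v) \in W^{1,r}$ in the initial data combined with the classical solvability results \cite{JLZ2022, FuSe2021} quoted before the lemma, which ensure sufficient smoothness of $(u,v)$ in the open time interval.
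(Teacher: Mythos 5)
Your proposal is correct and takes essentially the same route as the paper's proof: convexity and nonnegativity of $G_0$ from \eqref{mon} and the tangent line at $z=m$, the identity $\frac{\dd}{\dd t}\frac12\|\nabla\mathcal{K}(u-m)\|_2^2=\int_\Omega(mu-u^2)\gamma(v)\dd x$, and testing the $v$-equation with (essentially) $G_0'(v)$. The only cosmetic difference is that the paper multiplies \eqref{ivp2} by $(m-2v)\gamma(v)=-\big(G_0'(v)+m\gamma(m)\big)$ and removes the resulting constant term via the evolution \eqref{lf1} of $\|v\|_1$, whereas you test with $G_0'(v)$ alone and use $\langle u\rangle=m$ directly in the final algebraic identity — the same cancellation, organized in a slightly different order, and your bookkeeping does close.
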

%%%%%%%%%%%%%%%%
\begin{proof}[Proof of Lemma~\ref{lem.lf1}]
Since $G_0''(z) = 2 z \gamma'(z) + 2 \gamma(z) - m \gamma'(z) \ge 0$ for $z>0$ by \eqref{mon} and \eqref{G0}, the function $G_0$ is convex on $(0,\infty)$. We then deduce from the convexity of $G_0$ and \eqref{G0} that $G_0(z) \ge G_0(m) + G_0'(m)(z-m)=0$; that is, $G_0$ is nonnegative on $(0,\infty)$. 

\medskip

We next infer from \eqref{ivp1}, \eqref{opk}, and \eqref{lf1} that 
\begin{align}
\frac{1}{2} \frac{\mathrm{d}}{\mathrm{d}t} \|\nabla \mathcal{K}(u-m)\|_2^2 & = - \int_\Omega \mathcal{K}(u-m) \partial_t \Delta \mathcal{K}(u-m)\ \mathrm{d}x = \int_\Omega \mathcal{K}(u-m) \partial_t u\ \mathrm{d}x \nonumber \\
& = \int_\Omega \mathcal{K}(u-m) \Delta (u\gamma(v))\ \mathrm{d}x = \int_\Omega u \gamma(v) \Delta \mathcal{K}(u-m)\ \mathrm{d}x \nonumber \\
& = \int_\Omega (mu-u^2) \gamma(v)\ \mathrm{d}x \nonumber \\
& = \int_\Omega (mu + v^2 - 2 uv) \gamma(v)\ \mathrm{d}x - \int_\Omega (u-v)^2 \gamma(v)\ \mathrm{d}x\,. \label{lf2}
\end{align}
By \eqref{ivp2} and \eqref{ivp3},
\begin{align*}
\int_\Omega (m-2v)u \gamma(v)\ \mathrm{d}x & = \int_\Omega (m-2v) (\varepsilon \partial_t v - \Delta v + v) \gamma(v)\ \mathrm{d}x \\
& = - \varepsilon \int_\Omega (m\gamma(m)+G_0'(v)) \partial_t v\ \mathrm{d}x - \int_\Omega G_0''(v) |\nabla v|^2\ \mathrm{d}x \\
& \qquad + \int_\Omega (mv-2v^2) \gamma(v)\ \mathrm{d}x\,.
\end{align*}
Combining the above identity with \eqref{lf2} gives
\begin{align*}
\frac{1}{2} \frac{\mathrm{d}}{\mathrm{d}t} \|\nabla \mathcal{K}(u-m)\|_2^2 & = - \varepsilon \frac{\mathrm{d}}{\mathrm{d}t} \int_\Omega (m\gamma(m) v + G_0(v))\ \mathrm{d}x - \int_\Omega (u-v)^2 \gamma(v)\ \mathrm{d}x \\
& \qquad - \int_\Omega G_0''(v) |\nabla v|^2\ \mathrm{d}x + \int_\Omega (mv-v^2) \gamma(v)\ \mathrm{d}x\,,
\end{align*}
while we deduce from \eqref{lf1} that
\begin{equation*}
- \varepsilon m \gamma(m) \frac{\mathrm{d}}{\mathrm{d}t} \|v\|_1 - m \gamma(m) \|v\|_1 = - m^2 \gamma(m) |\Omega|\,.
\end{equation*}
Adding the previous two formulas leads us to
\begin{align*}
& \frac{1}{2} \frac{\mathrm{d}}{\mathrm{d}t} \|\nabla \mathcal{K}(u-m)\|_2^2 + \varepsilon \frac{\mathrm{d}}{\mathrm{d}t} \int_\Omega G_0(v)\ \mathrm{d}x \\
& \qquad = - \int_\Omega (u-v)^2 \gamma(v)\ \mathrm{d}x - \int_\Omega G_0''(v) |\nabla v|^2\ \mathrm{d}x - \int_\Omega (v-m) (v\gamma(v)-m\gamma(m))\ \mathrm{d}x\,,
\end{align*}
and we have proved \eqref{Z1}. Now, the nonnegativity of $\mathcal{L}_0$ and $\mathcal{D}_0$ is a consequence of the already established nonnegativity and convexity of $G_0$ and the monotonicity of $z\mapsto z \gamma(z)$ which is due to \eqref{mon}. Finally, the bound~\eqref{Z2} readily follows from~\eqref{Z1} after integration with respect to time.
\end{proof}

We next supplement the bound~\eqref{Z2} with additional estimates on $v$. 

%%%%%%%%%%%%%%%%
\begin{lemma}\label{lem.lf2}
	\begin{equation}
		\sup_{t\ge 0} \left\{ \|v(t)\|_{H^1}^2 \right\} +  \varepsilon \int_0^\infty \|\partial_t v(s)\|_2^2 \ \mathrm{d}s < \infty\,, \label{Z3}
	\end{equation}
\end{lemma}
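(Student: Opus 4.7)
The plan is to test equation~\eqref{ivp2} with $\partial_t v$ and leverage the Lyapunov estimate of Lemma~\ref{lem.lf1}. Since $\varepsilon\partial_t v - \Delta v = u - v$, multiplying by $\partial_t v$, integrating over $\Omega$, and using the Neumann boundary condition on $v$ gives the pointwise-in-time identity
\[
\varepsilon \|\partial_t v\|_2^2 + \frac{1}{2} \frac{\dd}{\dd t} \|\nabla v\|_2^2 = \int_\Omega (u-v)\, \partial_t v \dd x.
\]
A Young inequality on the right-hand side absorbs half of the dissipative term:
\[
\frac{\varepsilon}{2} \|\partial_t v\|_2^2 + \frac{1}{2} \frac{\dd}{\dd t} \|\nabla v\|_2^2 \le \frac{1}{2\varepsilon} \|u-v\|_2^2.
\]

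The problem then reduces to bounding $\int_0^\infty \|u-v\|_2^2\dd s$. Lemma~\ref{lem.lf1} provides the time-integrated control $\int_0^\infty \int_\Omega (u-v)^2 \gamma(v) \dd x\dd s \le \mathcal{L}_0(u^{in},v^{in})$ via~\eqref{Z2}, so what is missing is a uniform-in-time lower bound on $\gamma(v)$. Under the monotonicity hypothesis~\eqref{mon}, the classical solution theory of \cite{FuSe2021,JLZ2022} furnishes $\sup_{t\ge 0} \|v(t)\|_\infty =: V_\infty <\infty$; combined with the continuity and positivity of $\gamma$ on $[0,\infty)$, this yields $\gamma(v) \ge \gamma_\ast := \min_{[0,V_\infty]} \gamma > 0$ pointwise, and hence $\|u-v\|_2^2 \le \gamma_\ast^{-1}\int_\Omega (u-v)^2 \gamma(v) \dd x$.

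Inserting this control and integrating the differential inequality over $[0,t]$ gives simultaneously $\sup_{t\ge 0}\|\nabla v(t)\|_2^2<\infty$ and $\int_0^\infty \|\partial_t v(s)\|_2^2\dd s<\infty$, while $\sup_{t\ge 0}\|v(t)\|_2$ is immediate from the $L^\infty$ bound; together these prove~\eqref{Z3}. The main (and essentially only non-routine) ingredient is the uniform $L^\infty$ estimate on $v$: the Lyapunov functional by itself is insufficient, since for the critical choice $\gamma(z)=z^{-k}$ with $k$ close to $1$ the control on $\int_\Omega G_0(v)\dd x$ provided by~\eqref{Z2} only yields an $L^{2-k}$-in-space bound on $v$, and one genuinely has to invoke the classical-solution boundedness results of \cite{FuSe2021,JLZ2022}.
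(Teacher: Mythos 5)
Your opening identity (testing \eqref{ivp2} with $\partial_t v$ and applying Young's inequality) is the same starting point as the paper, but the way you close the estimate contains a genuine gap: you reduce everything to $\int_0^\infty\|u-v\|_2^2\,\mathrm{d}s<\infty$ and obtain it from \eqref{Z2} by claiming a time-uniform lower bound $\gamma(v)\ge\gamma_\ast>0$, which you justify by asserting that the classical solution theory of \cite{FuSe2021,JLZ2022} gives $\sup_{t\ge0}\|v(t)\|_\infty<\infty$. Those references are invoked in the paper only for global existence and uniqueness of classical solutions; they do not provide uniform-in-time $L^\infty$ bounds under the hypotheses of Theorem~\ref{thm.lt} (arbitrary $N\ge2$ and any $\gamma$ satisfying \eqref{mon}, e.g.\ $\gamma(z)=z^{-1}$ in high dimension), and in this generality no such bound is known --- indeed the whole point of the paper's argument is to avoid it, which is also why the theorem concludes only $L^2$ convergence of $v$ and why Lemma~\ref{lem.lf2} asserts an $H^1$, not an $L^\infty$, bound. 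Without a uniform upper bound on $v$ the inequality $\|u-v\|_2^2\le\gamma_\ast^{-1}\int_\Omega(u-v)^2\gamma(v)\,\mathrm{d}x$ has no justification, and your proof does not go through.

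For comparison, the paper closes the same energy identity without any lower bound on $\gamma(v)$: it writes $\int_\Omega u\,\partial_t v\,\mathrm{d}x=\frac{\dd}{\dd t}\int_\Omega uv\,\mathrm{d}x-\int_\Omega v\,\partial_t u\,\mathrm{d}x$, uses $\partial_t u=\Delta(u\gamma(v))$ and integration by parts to turn the last term into $-\int_\Omega u\gamma(v)\Delta v\,\mathrm{d}x$, splits this as $-\int_\Omega(u-v)\gamma(v)\Delta v\,\mathrm{d}x-\int_\Omega v\gamma(v)\Delta v\,\mathrm{d}x$, substitutes $-\Delta v=u-v-\varepsilon\partial_t v$ from \eqref{ivp2} in the first piece (so that, after Young's inequality with $\gamma(v)\le\gamma(0)$, it is controlled by $\int_\Omega(u-v)^2\gamma(v)\,\mathrm{d}x$ and $\frac{\varepsilon}{2}\|\partial_t v\|_2^2$), and bounds the second piece by $\frac12\int_\Omega G_0''(v)|\nabla v|^2\,\mathrm{d}x$ using \eqref{mon}; everything on the right is then a multiple of $\mathcal{D}_0(u,v)$, which is time-integrable by \eqref{Z2}, and the remaining term $\|u(t)v(t)\|_1$ is absorbed via the operator $\mathcal{K}$ and $\mathcal{L}_0$. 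If you want to rescue your route, you would have to either prove a uniform $L^\infty$ (or at least a uniform lower bound on $\gamma(v)$) under \eqref{mon} --- which is an open issue in this generality --- or rework the right-hand side along the lines above.
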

%%%%%%%%%%%%%%%%

\begin{proof}
	We multiply \eqref{ivp2} by $\partial_t v$ and integrate over $\Omega$ to obtain
	\begin{equation*}
		\varepsilon \|\partial_t v\|_2^2 + \frac{1}{2} \frac{\dd}{\dd t} \|v\|_{H^1}^2 = \int_\Omega u \partial_t v\ \mathrm{d}x = \frac{\dd}{\dd t} \int_\Omega u v\ \mathrm{d}x - \int_\Omega v \partial_t u\ \mathrm{d}x \,.
	\end{equation*}
	Then, using again \eqref{ivp},
	\begin{align}
		\varepsilon \|\partial_t v\|_2^2 & + \frac{\mathrm{d}}{\mathrm{d}t} \left( \frac{\|v\|_{H^1}^2}{2} - \|u v\|_1 \right) = - \int_\Omega u \gamma(v) \Delta v\ \mathrm{d}x \nonumber \\
		& = - \int_\Omega (u - v) \gamma(v) \Delta v\ \mathrm{d}x - \int_\Omega v \gamma(v) \Delta v\ \mathrm{d}x\,. \label{a102}
	\end{align}
	On the one hand, by \eqref{ivp3}, \eqref{mon}, and the nonnegativity of $m$, 
	\begin{align}
		- \int_\Omega v \gamma(v) \Delta v\ \mathrm{d}x & = \int_\Omega \left( v \gamma'(v) + \gamma(v) \right) |\nabla v|^2\ \mathrm{d}x = \frac{1}{2} \int_\Omega \big[ G_{0}''(v) + m \gamma'(v) \big] |\nabla v|^2\ \mathrm{d}x  \nonumber \\
		& \le \frac{1}{2} \int_\Omega G_0''(v) |\nabla v|^2\ \mathrm{d}x \,. \label{a103}
	\end{align}
	On the other hand, we infer from \eqref{ivp2}, \eqref{mon}, and Young's inequality that
	\begin{align}
		- \int_\Omega (u - v) \gamma(v) \Delta v\ \mathrm{d}x & = \int_\Omega (u - v) \gamma(v) \left( u - v - \varepsilon \partial_t v \right)\ \mathrm{d}x \nonumber \\
		&\le \int_\Omega (u - v)^2 \gamma(v)\ \mathrm{d}x + \frac{\varepsilon}{2} \|\partial_t v\|_2^2 + \frac{\varepsilon}{2} \gamma(0) \int_\Omega (u - v)^2 \gamma(v)\ \mathrm{d}x \nonumber \\
		& \le \frac{2+\varepsilon \gamma(0)}{2} \int_\Omega (u - v)^2 \gamma(v)\ \mathrm{d}x + \frac{\varepsilon}{2} \|\partial_t v\|_2^2\,. \label{a104}
	\end{align}
	Recalling the definition \eqref{D0} of $\mathcal{D}_0$ which is the sum of three nonnegative terms, it follows from \eqref{a102}, \eqref{a103}, and \eqref{a104} that
	\begin{equation*}
		\frac{\varepsilon}{2} \|\partial_t v\|_2^2  + \frac{\dd}{\dd t} \left( \frac{\|v\|_{H^1}^2}{2} - \|u v\|_1 \right) \le \frac{2+\varepsilon\gamma(0)}{2} \mathcal{D}_0(u,v)\,.
	\end{equation*}
	
	Now, let $t>0$. Integrating the above differential inequality with respect to time over $(0,t)$ and using~\eqref{Z2} give
	\begin{align}
		\varepsilon \int_0^t \|\partial_t v(s)\|_2^2 \ \mathrm{d}s + \|v(t)\|_{H^1}^2 & \le \|v^{in}\|_{H^1}^2 + 2 \|u(t)v(t)\|_1 + (2+\varepsilon\gamma(0)) \int_0^t  \mathcal{D}_0(u(s),v(s))\ \mathrm{d}s \nonumber\\
		& \le \|v^{in}\|_{H^1}^2 + (2+\varepsilon\gamma(0)) \mathcal{L}_0(u^{in},v^{in}) + 2 \|u(t)v(t)\|_1 \,. \label{a106}
	\end{align}
	Owing to \eqref{opk}, \eqref{L0}, \eqref{Z0v}, \eqref{Z2}, and Young's inequality,
	\begin{align}
		\|u(t) v(t)\|_1 & = \int_\Omega (u(t)-m) v(t)\ \mathrm{d}x + m \|v(t)\|_1 = - \int_\Omega v(t) \Delta\mathcal{K}(u(t)-m)\ \mathrm{d}x + m \|v(t)\|_1 \nonumber \\
		& \le \int_\Omega \nabla v(t) \cdot \nabla\mathcal{K}(u(t)-m)\ \mathrm{d}x + m \max\{\| v^{in} \|_1 , \|u^{in}\|_1 \} \nonumber \\
		& \le \frac{1}{4} \|\nabla v(t)\|_2^2 + \|\nabla\mathcal{K}(u(t)-m)\|_2^2 +  m \max\{\| u^{in} \|_1 , \|v^{in}\|_1 \} \nonumber \\
		& \le \frac{1}{4} \| v(t)\|_{H^1}^2 + 2\mathcal{L}_{0}(u(t),v(t)) +  m \max\{\| u^{in} \|_1 , \|v^{in}\|_1 \} \nonumber \\
		& \le \frac{1}{4} \|v(t)\|_{H^1}^2 + 2\mathcal{L}_{0}(u^{in},v^{in}) +  m \max\{\| u^{in} \|_1 , \|v^{in}\|_1 \} \,. \label{a105}
	\end{align}
Combining~\eqref{a106} and~\eqref{a105} leads us to 
\begin{equation*}
	\varepsilon \int_0^t \|\partial_t v(s)\|_2^2 \ \mathrm{d}s + \frac{\|v(t)\|_{H^1}^2}{2} \le \|v^{in}\|_{H^1}^2 + (6+\varepsilon\gamma(0)) \mathcal{L}_0(u^{in},v^{in}) +  2 m \max\{\| u^{in} \|_1 , \|v^{in}\|_1 \} \,,
\end{equation*}
and completes the proof.
\end{proof}

%%%%%%%%%%%%%%%%
%%%%%%%%%%%%%%%%
\subsection{Convergence to spatially homogeneous steady states}\label{sec.cshss}
%%%%%%%%%%%%%%%%
%%%%%%%%%%%%%%%%

Collecting the outcome of \Cref{lem.lf1} and \Cref{lem.lf2}, we have established the identity~\eqref{lf0} and the estimates~\eqref{lt.00}. We are left with the long-term convergence and begin with some properties of $G_0$ which we gather in the next lemma.

%%%%%%%%%%%%%%%%
\begin{lemma}\label{lem.x}
There is $K_1>0$ depending only on $\gamma$ such that $\gamma$ satisfies \eqref{phi1} with $k=1$. Moreover,
\begin{equation}
| z\gamma(z) - m\gamma(m)| \le \gamma(0)\, |z-m|\,, \qquad z\in [0,\infty)\,. \label{lt100}
\end{equation}
In addition, recalling that $G_0$ is defined in~\eqref{G0} and is convex on $(0,\infty)$, the function $\sqrt{G_{0}''}\in L^1(0,z)$ for any $z>0$ and its indefinite integral 
\begin{equation*}
g_0(z) := \int_0^z \sqrt{G_{0}''(z_*)}\ \mathrm{d}z_*\,, \qquad z\in [0,\infty)\,,
\end{equation*}
is well-defined and belongs to $C^{0,\frac12}([0,z])$ for all $z>0$.
\end{lemma}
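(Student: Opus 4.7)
The three claims follow from the monotonicity constraints~\eqref{mon} together with the positivity and continuity of $\gamma$, so the plan is to exploit these sequentially.

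For the first claim, I would use the nondecreasing character of $z\mapsto z\gamma(z)$, which is immediate from $(z\gamma(z))'\ge 0$. This yields $z\gamma(z)\ge \gamma(1)>0$ for $z\ge 1$, hence $\gamma(z) \ge \gamma(1)/z \ge \gamma(1)/(z+1)$ on $[1,\infty)$, while on $[0,1]$ the continuity and positivity of $\gamma$ provide a positive lower bound $c_0:=\min_{[0,1]}\gamma>0$. Choosing $K_1:=\max\{1/c_0, 1/\gamma(1)\}$ and distinguishing the two cases then gives~\eqref{phi1} with $k=1$. For the second claim, I would rewrite $(z\gamma(z))' = \gamma(z) + z\gamma'(z)$ and, using $\gamma'\le 0$ and $\gamma\le\gamma(0)$, deduce the two-sided bound $0\le (z\gamma(z))' \le \gamma(0)$ on $(0,\infty)$. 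Thus $z\mapsto z\gamma(z)$ is Lipschitz with constant at most $\gamma(0)$ on $(0,\infty)$, and~\eqref{lt100} extends to $z=0$ by continuity (the boundary case $z=0$ being $m\gamma(m)\le\gamma(0)m$, which also follows from $\gamma\le\gamma(0)$).

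The third claim is the delicate one, as one must understand how $G_0''$ behaves near $z=0$. Starting from~\eqref{G0}, I would compute
\begin{equation*}
G_0''(z) = 2(z\gamma(z))' - m\gamma'(z)\,,
\end{equation*}
and observe that both terms are nonnegative by~\eqref{mon} (reproving the convexity already stated in Theorem~\ref{thm.lt}). The main obstacle is the potential singularity of $\gamma'$ at the origin, which I would control by exploiting the nonnegativity of $(z\gamma(z))'$ in the form $-\gamma'(z) \le \gamma(z)/z \le \gamma(0)/z$. Combined with $(z\gamma(z))'\le \gamma(0)$, this produces the pointwise bound $G_0''(z) \le 2\gamma(0) + m\gamma(0)/z$ for $z>0$, and subadditivity of the square root gives
\begin{equation*}
\sqrt{G_0''(z)} \le \sqrt{2\gamma(0)} + \sqrt{m\gamma(0)/z}\,,
\end{equation*}
whose integrability on $(0,z_0)$ is clear for any $z_0>0$.

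To conclude, I would use the elementary inequality $\sqrt{z}-\sqrt{z'}\le\sqrt{z-z'}$ (valid for $0\le z'\le z$) together with the previous pointwise bound to obtain, for any $0\le z' \le z \le z_0$,
\begin{equation*}
|g_0(z) - g_0(z')| \le \int_{z'}^{z} \sqrt{G_0''(z_*)}\,\mathrm{d}z_* \le \sqrt{2\gamma(0)}\,(z-z') + 2\sqrt{m\gamma(0)}\,\sqrt{z-z'} \le C(z_0)\,\sqrt{|z-z'|}\,,
\end{equation*}
which establishes the claimed $C^{0,1/2}([0,z_0])$ regularity of $g_0$.
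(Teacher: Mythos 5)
Your proof is correct. For the first two claims it coincides with the paper's argument: the lower bound $z\gamma(z)\ge\gamma(1)$ for $z\ge 1$ plus a positive minimum of $\gamma$ on $[0,1]$ gives \eqref{phi1} with $k=1$ (the paper simply notes that, by $\gamma'\le 0$, this minimum is $\gamma(1)$, so one constant suffices), and \eqref{lt100} follows in both cases from $0\le (z\gamma(z))'\le\gamma(0)$ by integration. For the third claim your route is genuinely different. The paper never estimates $G_0''$ pointwise: it applies the Cauchy--Schwarz inequality to $\int_{z_1}^{z_2}\sqrt{G_0''}\,\mathrm{d}z$, which produces $\sqrt{z_2-z_1}\,\sqrt{G_0'(z_2)-G_0'(z_1)}$, and then bounds $G_0'(z_2)-G_0'(z_1)\le 2\gamma(0)z_2+m\gamma(0)$ using only $z\gamma(z)\le\gamma(0)z$ and $\gamma\le\gamma(0)$; convexity alone makes $G_0'$ the relevant quantity, so no control of $\gamma'$ near the origin is needed. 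You instead extract the pointwise bound $-\gamma'(z)\le\gamma(z)/z\le\gamma(0)/z$ from $(z\gamma(z))'\ge 0$, deduce $G_0''(z)\le 2\gamma(0)+m\gamma(0)/z$, and integrate the explicit majorant $\sqrt{2\gamma(0)}+\sqrt{m\gamma(0)/z}$, finishing with $\sqrt{z}-\sqrt{z'}\le\sqrt{z-z'}$. Both arguments use only \eqref{mon} and yield the same local $C^{0,1/2}$ modulus; the paper's is slightly more economical and would survive even if $G_0''$ had worse (non-power) local singularities compatible with convexity and the bound on $G_0'$, whereas yours has the small advantage of exhibiting the explicit $1/z$ blow-up rate of $G_0''$, which makes the integrability of $\sqrt{G_0''}$ near $0$ completely transparent.
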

%%%%%%%%%%%%%%%%

\begin{proof}
Since $\gamma$ satisfies \eqref{mon}, the function $\gamma$ satisfies $z\gamma(z) \ge \gamma(1)>0$ for $z\ge 1$, while the positivity and monotonicity of $\gamma$ on $[0,1]$ implies that $\min_{[0,1]} \gamma=\gamma(1)>0$. Combining these two facts ensures that $\gamma$ satisfies \eqref{phi1} with $k=1$ and $K_1=1/\gamma(1)$. It next follows
from the monotonicity of $\gamma$ that
\begin{equation}
0 \le \frac{\mathrm{d}}{\mathrm{d}z}(z\gamma(z)) = z \gamma'(z) + \gamma(z) \le \gamma(0)\,, \qquad z\ge 0\,. \label{lzgz}
\end{equation}
Integrating the above differential inequality gives \eqref{lt100}.

Next, the convexity of $G_0$ provided by \Cref{lem.lf1} guarantees that $\sqrt{G_0''}$ is well-defined. Using Cauchy-Schwarz inequality, we obtain that, for $z_2>z_1>0$,
\begin{align*}
\int_{z_1}^{z_2} \sqrt{G_{0}''(z)}\ \mathrm{d}z & \le \sqrt{z_2-z_1} \left( \int_{z_1}^{z_2} G_{0}''(z)\ \mathrm{d}z \right)^{1/2} = \sqrt{z_2-z_1} \sqrt{G_{0}'(z_2) - G_{0}'(z_1)} \\
& \le \sqrt{z_2-z_1} \sqrt{2\gamma(0) z_2 + m \gamma(0)}\,.
\end{align*} 
The stated properties of $g_0$ then readily follow from the above inequality, which concludes \Cref{lem.x}.
\end{proof}
\bigskip

\begin{proof}[Proof of Theorem~\ref{thm.lt}]
	As already mentioned, the identity~\eqref{lf0} and the estimates~\eqref{lt.00} are shown in \Cref{lem.lf1} and \Cref{lem.lf2}, respectively, and we now turn to the large time behavior. Since the monotonicity properties of $\gamma$ guarantee that all the terms in $\mathcal{L}_0(u,v)$ and $\mathcal{D}_0(u,v)$ are nonnegative, we infer from \eqref{lt.00} and the Poincar\'e-Wirtinger inequality that
\begin{equation}
\|\mathcal{K}(u(t)-m)\|_{H^1} + \|v(t)\|_{H^1} \le C\,, \qquad t\ge 0\,, \label{lt02}
\end{equation}
\begin{align}
\int_0^\infty \|\partial_t v(s)\|_2^2\ \mathrm{d}s & + \int_0^\infty \int_\Omega \gamma(v) (u-v)^2\ \mathrm{d}x\, \mathrm{d}s < \infty\,, \label{lt03} \\
\int_0^\infty \int_\Omega G_0''(v) |\nabla v|^2\ \mathrm{d}x\, \mathrm{d}s & + \int_0^\infty \int_\Omega (v-m) \big( v\gamma(v) - m \gamma(m) \big) \ \mathrm{d}x\, \mathrm{d}s < \infty\,. \label{lt04} 
\end{align}
We readily infer from \eqref{lt02} and the compactness of the embedding of $H^1(\Omega)$ in $L^2(\Omega)$ that 
\begin{equation}
\{\mathcal{K}(u(t)-m)\,:\, t\ge 0\} \;\text{and }\; \{ v(t)\,:\, t\ge 0\} \;\text{ are compact in }\; L^2(\Omega) \label{lt05a}
\end{equation}
and there are a sequence $(t_j)_{j\ge 1}$ of positive times, $t_j\to\infty$, and $(U_\infty,v_\infty)\in H^1(\Omega,\mathbb{R}^2)$ such that
\begin{equation}
\lim_{j\to\infty} \left( \left\| \mathcal{K}(u(t_j)-m) - U_\infty \right\|_2 + \|v(t_j)-v_\infty\|_2 \right) = 0\,. \label{lt05b}
\end{equation}
Since
\begin{equation*}
\lim_{t\to\infty} \|v(t)\|_1 = m |\Omega|
\end{equation*}
by \eqref{Z0v}, a straightforward consequence of \eqref{lt05b} and the definition of $\mathcal{K}$ is that
\begin{equation}
\langle U_\infty \rangle = 0 \;\;\text{ and } \langle v_\infty \rangle = m |\Omega|\,. \label{lt200}
\end{equation}

For $j\ge 1$ and $s\in [-1,1]$, we set $(u_j,v_j)(s) := (u,v)(s+t_j)$ and first observe that
\begin{align*}
\|v_j(s)-v_\infty\|_2 & \le \|v_j(s) - v_j(0)\|_2 + \|v_j(0) - v_\infty\|_2 \\
& = \|v(s+t_j) - v(t_j)\|_2 + \|v(t_j) - v_\infty\|_2 \\
& \le \left| \int_{t_j}^{s+t_j} \|\partial_t v(\bar{s})\|_2\ \mathrm{d}\bar{s} \right| + \|v(t_j) - v_\infty\|_2 \\
& \le \left( \int_{t_j-1}^{t_j+1} \|\partial_t v(\bar{s})\|_2^2\ \mathrm{d}\bar{s} \right)^{1/2} + \|v(t_j) - v_\infty\|_2 \,.
\end{align*}
Since the right-hand side of the above inequality does not depend on $s\in [-1,1]$ and converges to zero as $j\to\infty$ according to \eqref{lt03} and \eqref{lt05b}, we conclude that
\begin{equation}
\lim_{j\to \infty} \sup_{s\in [-1,1]} \|v_j(s) -v_\infty\|_2 = 0\,. \label{lt06a}
\end{equation}
An immediate consequence of \eqref{lt06a} is that, up to the extraction of a subsequence, we may assume that
\begin{equation}
\lim_{j\to \infty} v_j(s,x) = v_\infty(x) \;\text{ for a.e. }\; (s,x)\in (-1,1)\times \Omega\,. \label{lt06b}
\end{equation}
It next follows from \eqref{lt04} and the monotonicity
 \eqref{mon} 
of $z\mapsto z \gamma(z)$ that
\begin{align*}
& \lim_{j\to\infty} \int_{-1}^1 \int_\Omega (v_j-m) \big( v_j\gamma(v_j) - m \gamma(m) \big)\ \mathrm{d}x\, \mathrm{d}s \\
& \qquad = \lim_{j\to\infty} \int_{t_j-1}^{t_j+1} \int_\Omega (v-m) \big( v\gamma(v) - m \gamma(m) \big)\ \mathrm{d}x\, \mathrm{d}s = 0\,,
\end{align*}
which gives, together with \eqref{lt06b} and Fatou's lemma\,,
\begin{equation}
\int_{-1}^1 \int_\Omega (v_\infty-m) \big( v_\infty \gamma(v_\infty) - m \gamma(m) \big)\ \mathrm{d}x\, \mathrm{d}s = 0\,. \label{lt07a}
\end{equation}
Introducing
\begin{align*}
m_i & := \inf\{ z\in (0,\infty)\,:\, z\gamma(z) = m\gamma(m)\} \,,\\ 
m_s & := \sup\{ z\in (0,\infty)\,:\, z\gamma(z) = m\gamma(m)\} \in [m,\infty]\,, \\
\mathcal{I} & := \{z\in (0,\infty)\,:\, z\gamma(z) = m\gamma(m)\}\,,
\end{align*}
we infer from the boundedness of $\gamma$ and the monotonicity
 \eqref{mon}
 of $z\mapsto z \gamma(z)$ that 
\begin{equation*}
m_i \in (0,m] \;\;\text{ and }\;\; \mathcal{I} = \left\{ 
\begin{array}{lc}
[m_i,m_s]\,, & m_s< \infty\,, \\\
 & \\\
[m_i,\infty)\,, & m_s=\infty\,.
\end{array}\right.
\end{equation*}
Combining this property with \eqref{lt07a} implies in particular that
\begin{equation}
v_\infty(x) \in \mathcal{I} \;\text{ for a.e. }\; x\in\Omega\,. \label{lt07b}
\end{equation}

At this point, either $m_i=m_s=m$ and it readily follows from \eqref{lt07b} that $v_\infty\equiv m$.

Or $m_i\ne m_s$, and the property $z\gamma(z) = m\gamma(m)$ for $z\in \mathcal{I}$ entails that $G_0''(z) = - m \gamma'(z) = m^2 \gamma(m)/z^2$ for $z\in \mathcal{I}$. Introducing 
\begin{equation*}
g(z) := \left\{ 
\begin{array}{cl}
0\,, & z\in [0,m_i) \\
 & \\
\displaystyle{\int_{m_i}^z \sqrt{G_0''(z_*)}\ \mathrm{d}z_* = m \sqrt{\gamma(m)} \ln{(z/m_i)}\,,} & z\in \mathcal{I}\,, \\
 & \\
\displaystyle{\int_{m_i}^{m_s} \sqrt{G_0''(z_*)}\ \mathrm{d}z_* = m \sqrt{\gamma(m)} \ln{(m_s/m_i)}\,,} & z\in (m_s,\infty) \quad (\text{ when }\; m_s<\infty)\,,
\end{array}\right.
\end{equation*}
we infer from \eqref{lt04}, \eqref{lt06a}, and the Lipschitz continuity of $g$ that
\begin{equation}
\lim_{j\to\infty} \int_{-1}^1 \|\nabla g(v_j(s))\|_2^2\ \mathrm{d}s \le \lim_{j\to\infty} \int_{t_j-1}^{t_j+1} \int_\Omega G_0''(v) |\nabla v|_2^2\ \mathrm{d}x\, \mathrm{d}s = 0, \label{lt09}
\end{equation}
and
\begin{equation}
\lim_{j\to \infty} \sup_{s\in [-1,1]} \|g(v_j(s)) - g(v_\infty)\|_2 = 0\,. \label{lt10} 
\end{equation}
Combining \eqref{lt09} and \eqref{lt10} implies that $\nabla g(v_\infty)=0$ a.e. in $\Omega$ and we deduce from \eqref{lt07b}, the connectedness of $\Omega$, and the strict monotonicity of $g$ on $\mathcal{I}$ that there is a unique $\mu\in \mathcal{I}$ such that $v_\infty=\mu$ a.e. in $\Omega$. Recalling that $\langle v_\infty \rangle=m|\Omega|$ by \eqref{lt200}, we conclude that necessarily $\mu=m$. Consequently, $v_\infty\equiv m$ in this case as well, so that, recalling \eqref{lt06a}, we have shown that
\begin{equation}
\lim_{j\to \infty} \sup_{s\in [-1,1]} \|v_j(s) - m\|_2 = 0\,. \label{lt11}
\end{equation}

We next turn to the behaviour of $u$ and the identification of $U_\infty$ in \eqref{lt05b}. On the one hand, for $p\in [1,4N/(3N-2)]\cap [1,2)$, H\"older's inequality gives
\begin{align*}
\int_{-1}^1 \int_\Omega |u_j-v_j|^p\ \mathrm{d}x\, \mathrm{d}s & = \int_{t_j-1}^{t_j+1} \int_\Omega |u-v|^p \gamma(v)^{p/2} \gamma(v)^{-p/2}\ \mathrm{d}x\, \mathrm{d}s \\
& \le \left( \int_{t_j-1}^{t_j+1} \int_\Omega |u-v|^2 \gamma(v) \ \mathrm{d}x\, \mathrm{d}s \right)^{p/2} \left( \int_{t_j-1}^{t_j+1} \int_\Omega \gamma(v)^{-p/(2-p)}\ \mathrm{d}x\, \mathrm{d}s \right)^{(2-p)/2}\,,
\end{align*}
and we infer from \eqref{lt02}, Lemma~\ref{lem.x}, and the continuous embedding of $H^1(\Omega)$ in $L^{p/(2-p)}(\Omega)$ that
\begin{align*}
\int_{t_j-1}^{t_j+1} \int_\Omega \gamma(v)^{-p/(2-p)}\ \mathrm{d}x\, \mathrm{d}s & \le C \int_{t_j-1}^{t_j+1} \int_\Omega (1+v)^{p/(2-p)}\ \mathrm{d}x\, \mathrm{d}s \\
& \le C(p) \left( 1 + \int_{t_j-1}^{t_j+1} \|v(s)\|_{p/(2-p)}^{p/(2-p)}\ \mathrm{d}s \right) \\
& \le C(p) \left( 1 + \sup_{s\ge 0} \|v(s)\|_{H^1}^{p/(2-p)} \right) \\
& \le C(p)\,. 
\end{align*}
Combining the above inequalities leads us to
\begin{equation*}
\int_{-1}^1 \int_\Omega |u_j-v_j|^p\ \mathrm{d}x\, \mathrm{d}s \le C(p) \left( \int_{t_j-1}^{t_j+1} \int_\Omega |u-v|^2 \gamma(v) \ \mathrm{d}x\, \mathrm{d}s \right)^{p/2}\,,
\end{equation*}
which gives, along with \eqref{lt03},
\begin{equation*}
\lim_{j\to\infty} \int_{-1}^1 \int_\Omega |u_j-v_j|^p\ \mathrm{d}x\, \mathrm{d}s = 0\,.
\end{equation*}
Recalling \eqref{lt11}, we end up with
\begin{equation}
\lim_{j\to\infty} \int_{-1}^1 \int_\Omega |u_j-m|^p\ \mathrm{d}x\, \mathrm{d}s = 0\,. \label{lt12}
\end{equation}
On the other hand, it follows from H\"older's inequality that
\begin{align*}
\|\mathcal{K}\partial_t u\|_2 & = \big\| \langle (u-v) \gamma(v) \rangle + \langle v\gamma(v) - m\gamma(m) \rangle + m\gamma(m) - v \gamma(v) + (v-u) \gamma(v) \big\|_2 \\
& \le \sqrt{|\Omega|} \left( \big| \langle (u-v) \gamma(v) \rangle \big| + \big| \langle v\gamma(v) - m\gamma(m) \rangle \big| \right) \\
& \qquad + \big\| m\gamma(m) - v \gamma(v) \big\|_2 + \big\| (v-u) \gamma(v) \big\|_2 \\
& \le \left( 1 + |\Omega| \right) \left( \big\| v\gamma(v) - m\gamma(m) \big\|_2 + \big\| (v-u) \gamma(v) \big\|_2 \right) \,.
\end{align*}
Since (with $K_0 = ||\gamma||_{\infty}$)
\begin{equation*}
\big\| (v-u) \gamma(v) \big\|_2^2 \le K_0 \int_\Omega (v-u)^2 \gamma(v)\ \mathrm{d}x ,
\end{equation*}
and
\begin{align*}
\big\| v\gamma(v) - m \gamma(m)\big\|_2^2 & \le K_0 \int_\Omega \big| v\gamma(v) - m\gamma(m) \big| |v-m|\ \mathrm{d}x \\
& = K_0 \int_\Omega \big( v\gamma(v) - m\gamma(m) \big) (v-m)\ \mathrm{d}x
\end{align*}
by \eqref{mon} and Lemma~\ref{lem.x}, we conclude that
\begin{equation*}
\|\mathcal{K}\partial_t u\|_2^2 \le 2 K_0 (1+|\Omega|)^2 \int_\Omega \left[ (u-v)^2 \gamma(v) + (v-m)\big( v\gamma(v) - m\gamma(m) \big) \right] \ \mathrm{d}x \,.
\end{equation*}
Hence, thanks to \eqref{lt03} and \eqref{lt04}, 
\begin{equation*}
\int_0^\infty \|\mathcal{K}\partial_t u(s)\|_2^2\ \mathrm{d}s < \infty\,,
\end{equation*}
and, since $\mathcal{K}\partial_t u = \partial_t \mathcal{K}(u-m)$, we argue as in the proof of \eqref{lt06a} to deduce from \eqref{lt05b} and the above integrability property that
\begin{equation}
\lim_{j\to\infty} \sup_{s\in [-1,1]} \|\mathcal{K}(u_j(s)-m) - U_\infty\|_2 = 0\,. \label{lt13}
\end{equation}
According to \eqref{lt02} and \eqref{lt13}, we may also assume that, up to the extraction of a subsequence, 
\begin{equation}
\mathcal{K}(u_j-m) \stackrel{*}{\rightharpoonup} U_\infty \;\;\text{ in }\;\; L^\infty([-1,1];H^1(\Omega))\,. \label{lt14}
\end{equation}
We then infer from \eqref{lt12} and \eqref{lt14} that, for any $\varphi\in H^1(\Omega)$, 
\begin{align*}
0 = \lim_{j\to\infty} \int_{-1}^1 \int_\Omega (u_j(s)-m) \varphi\ \mathrm{d}x\, \mathrm{d}s & = \lim_{j\to\infty} \int_{-1}^1 \int_\Omega \nabla\mathcal{K}(u_j(s)-m)\cdot \nabla\varphi\ \mathrm{d}x\, \mathrm{d}s \\
& = \int_{-1}^1 \int_\Omega \nabla U_\infty\cdot \nabla\varphi\ \mathrm{d}x\, \mathrm{d}s = 2 \int_\Omega \nabla U_\infty\cdot \nabla\varphi\ \mathrm{d}x \,,
\end{align*}
which entails, together with \eqref{lt200}, that $U_\infty\equiv 0$. 

We have thus proved that $(0,m)$ is the only cluster point as $t\to\infty$ of $\{\big( \mathcal{K}(u(t)-m),v(t) \big)\:\ t\ge 0\}$ in $L^2(\Omega;\mathbb{R}^2)$. Together with the already established compactness \eqref{lt05a} of this set in $L^2(\Omega;\mathbb{R}^2)$, this property implies that $\big( \mathcal{K}(u(t)-m),v(t) \big)$ converges to $(0,m)$ in $L^2(\Omega;\mathbb{R}^2)$ as $t\to\infty$ and completes the proof of \Cref{thm.lt}.
\end{proof}

\begin{proof} [Proof of Proposition \ref{prop99}]
  According to \cite[Proposition 1.2]{lin_ni_takagi_JDE1988}, there is $d_0>0$ such that if $d\in (0,d_0)$,
  then there exists a nonconstant positive solution $w=w^{(d)} \in C^2(\bom_0)$ of
  \bas
	\left\{ \begin{array}{l}
	0 = d \Delta w - w + w^k
	\qquad \mbox{in } \Omega_0, \\[1mm]
	0 = \nabla w\cdot \mathbf{n}
	\qquad \mbox{on } \pO_0.
	\end{array} \right.
  \eas
  Setting $R_0:=\frac{1}{\sqrt{d_0}}$, and picking $R>R_0$, we then obtain that $d:=\frac{1}{R^2}$ satisfies $d\in (0,d_0)$, and that for
  \bas
	v(x):=w^{(d)}(\sqrt{d} x)
	\quad \mbox{and} \quad
	u(x):=v^k(x),
	\qquad x\in R\Omega_0,
  \eas
  we have $\nabla u\cdot \mathbf{n} = \nabla v\cdot \mathbf{n}=0$ on $\partial (R\Omega_0)$ as well as
  \bas
	\Delta v(x) - v(x) + u(x) 
	= d\Delta w(\sqrt{d}x) - w(\sqrt{d}x) + w^k(\sqrt{d}x) =0
	\qquad \mbox{for all } x\in R\Omega_0
  \eas
  and
  \bas
	\Delta \big(u\gamma(v)\big) = \Delta (v^k \cdot v^{-k})=0
	\qquad \mbox{in } R\Omega_0,
  \eas
  as claimed.
\end{proof}

\section{System with logistic-type growth}\label{sec.ltg}
%%%%%%%%%%%%%
%%%%%%%%%%%%%
%
%
%
%
%
%
%
%
%\newcommand{\Om}{\Omega}
\newcommand{\na}{\nabla}
\newcommand{\Del}{\Delta}
\newcommand{\hs}{\hspace*}
In this final part we address the problem (\ref{02}) involving logistic-type zero order degradation.
As our approach in the present section will no longer make use of a comparison argument, we may here employ a somewhat simpler regularization which enforces global solvability at the respective approximate
level by involving a suitably strong damping in the signal production mechanism. More precisely, assuming throughout this
section that $\gamma$, $h$, $u^{in}$ and $v^{in}$ comply with the requirements from Theorem \ref{theo262b},
for $\eta\in (0,1)$ we shall consider
\begin{subequations}\label{02eta}
\begin{align}
	\partial_t \ueta- \Delta (\ueta \gamma(\veta)) = \ueta \, h(\ueta),
	\qquad &(t,x)\in (0,\infty)\times\Omega, \\
	\varepsilon \partial_t \veta=\Delta \veta-\veta + \frac{\ueta}{1+\eta\ueta},
	\qquad &(t,x)\in (0,\infty)\times\Omega, \\
	\nabla (\ueta \gamma(\veta)) \cdot \mathbf{n}  = \nabla \veta \cdot \mathbf{n} =0,
	\qquad & (t,x)\in (0,\infty)\times\partial\Omega, \\
	(\ueta,\veta)(0, \cdot) = (u^{in},v^{in})\,.
	\quad & x\in\Omega,
\end{align}
\end{subequations}
Indeed, by straightforward adaptation from standard arguments from the theory of Keller-Segel type
cross-diffusion systems (see, e.g., \cite{TaWi2017}) it can be seen that each of these problems admits a global classical solution
$(u_\eta,v_\eta)$ with $0\le u_\eta\in C([0,\infty)\times \bom) \cap C^{1,2}((0,\infty)\times \bom)$ and
$0\le v_{\eta} \in \bigcap_{q>1} C([0,\infty);W^{1,q}(\Om)) \cap C^{1,2}((0,\infty)\times\bom)$, and that
\be{v_lower}
	v_\eta(t,x) \ge \left\{ \inf_\Om v^{in} \right\} e^{-t/{\eps}}
	\qquad \mbox{for all $(t,x) \in [0,\infty)\times\bom$.}
\ee
Now the core of this section is contained in the following.
\begin{lemma}\label{lem222}
  Assume (\ref{gamma2}) and (\ref{h}). Then for all $T>0$ there exists $C(T)>0$ such that for all $\eta\in (0,1)$,
  \be{222.1}
	\int_0^T \io \left\{ \ueta \ln (\ueta+e) |h(\ueta)|
	+ \ueta^2
	+ \gamma(\veta) \frac{|\na\ueta|^2}{\ueta+e} 
	+ |\Delta \veta|^2 
	+ \frac{|\na \veta|^4}{\veta^2} \right \} \dd x \dd t
	\le C(T),
  \ee
  as well as
  \be{222.01}
	 \io \ueta(t) \ln \big(\ueta(t)+e\big) \dd x + \io |\na\veta(t)|^2 \dd x \le C(T) \qquad \text{for all } t\in (0,T).
  \ee
\end{lemma}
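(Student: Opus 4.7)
The plan is to derive a differential inequality for the entropy-like functional
\begin{equation*}
E_\eta(t) := \io (\ueta+e)\ln(\ueta+e) \dd x + \frac{\varepsilon}{2}\io |\na\veta|^2 \dd x
\end{equation*}
by combining two test-function computations. First, I would test the $\ueta$-equation in \eqref{02eta} against $\ln(\ueta+e)$ and integrate by parts to obtain
\begin{equation*}
\frac{\dd}{\dd t}\io (\ueta+e)\ln(\ueta+e) \dd x + \io \frac{\gamma(\veta)|\na\ueta|^2}{\ueta+e} \dd x = -\io \frac{\ueta \gamma'(\veta)}{\ueta+e}\na\ueta\cdot\na\veta \dd x + \io \ueta h(\ueta)\ln(\ueta+e) \dd x,
\end{equation*}
and simultaneously test the $\veta$-equation against $-\Del\veta$, yielding
\begin{equation*}
\frac{\varepsilon}{2}\frac{\dd}{\dd t}\io |\na\veta|^2 \dd x + \io |\Del\veta|^2 \dd x + \io |\na\veta|^2 \dd x = -\io \frac{\ueta\Del\veta}{1+\eta\ueta} \dd x.
\end{equation*}

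Next, using the lower bound \eqref{v_lower} (which guarantees $\veta \ge v_0(T) > 0$ on $[0,T]\times\Om$) together with \eqref{gamma2} (giving $(\gamma'(\veta))^2/\gamma(\veta) \le C(T)/\veta$ pointwise) and the elementary $\ueta^2/(\ueta+e) \le \ueta$, Young's inequality bounds the cross term by $\tfrac{1}{2}\io \gamma(\veta)|\na\ueta|^2/(\ueta+e)\dd x + C(T)\io \ueta|\na\veta|^2/\veta \dd x$, and a further split gives $\ueta|\na\veta|^2/\veta \le \tfrac{1}{2}\ueta^2 + \tfrac{1}{2}|\na\veta|^4/\veta^2$; the source term in the $\veta$-equation is handled analogously, producing $\tfrac{1}{2}|\Del\veta|^2 + C\ueta^2$. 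Assumption \eqref{h} is decisive here: since $h(s)\ln s/s \to -\infty$ and $\ln(s+e)/\ln s \to 1$, for every $K>0$ there exists $C_K > 0$ with $-sh(s)\ln(s+e) \ge Ks^2 - C_K$ for all $s\ge 0$. Choosing $K$ sufficiently large, the logistic term $\io \ueta h(\ueta)\ln(\ueta+e)\dd x$ absorbs all bad contributions proportional to $\io \ueta^2 \dd x$, while simultaneously producing the coercive control on $\io \ueta|h(\ueta)|\ln(\ueta+e)\dd x$ that appears in \eqref{222.1}.

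The main technical obstacle is to absorb the remaining bad term $C(T)\io |\na\veta|^4/\veta^2 \dd x$. Since $\veta\ge v_0(T)$, this is majorized by $C(T)\|\na\veta\|_{L^4}^4$, which I would handle by a Gagliardo--Nirenberg interpolation of the form $\|\na\veta\|_4^4 \le C\|\na\veta\|_2^{4-\sigma}\|\veta\|_{H^2}^\sigma$ (with the admissible exponent $\sigma$ depending on $N$) combined with elliptic regularity $\|\veta\|_{H^2} \le C(\|\Del\veta\|_2 + \|\veta\|_2)$. Since $\|\na\veta\|_2^2 \le C E_\eta(t)$ by construction, one arrives at a differential inequality of the form
\begin{equation*}
E_\eta'(t) + \tfrac{1}{2} D_\eta(t) \le C(T)\bigl(1+E_\eta(t)\bigr) D_\eta(t) + C(T),
\end{equation*}
where $D_\eta(t)$ collects the dissipation terms sought in \eqref{222.1}. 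A continuation/bootstrap argument, initialized by the finite value $E_\eta(0)$ guaranteed by $u^{in}\in C(\bom)$ and $v^{in}\in W^{1,\infty}(\Om)$, then closes the estimate on $[0,T]$ and produces \eqref{222.1} and \eqref{222.01} uniformly in $\eta\in(0,1)$.
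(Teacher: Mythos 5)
Your skeleton coincides with the paper's: the same entropy functional, testing the first equation in (\ref{02eta}) with $\ln(\ueta+e)$ and the second with $-\Del\veta$, the pointwise bounds from (\ref{v_lower}) and (\ref{gamma2}), and the absorption of all $\io\ueta^2$-type contributions through assumption (\ref{h}). The genuine gap is your treatment of the remaining term $C(T)\io|\na\veta|^4/\veta^2\dd x$. By discarding the weight $1/\veta^2$ and estimating $\|\na\veta\|_4^4$ via Gagliardo--Nirenberg and elliptic regularity, you arrive at an inequality of the form $E_\eta'+\tfrac12 D_\eta\le C(T)(1+E_\eta)D_\eta+C(T)$, and this does not close: the bad term is the full dissipation multiplied by the very quantity you want to bound, so the continuation argument requires $C(T)(1+E_\eta)<\tfrac12$ throughout $[0,T]$, which no smallness assumption provides here (and once it fails you control neither $\int D_\eta$ nor $E_\eta$). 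Worse, the interpolation step is dimension-restricted: $\|\na\veta\|_4^4\lesssim\|\na\veta\|_2^{4-N}\|\veta\|_{H^2}^{N}$ needs $N\le 4$ (for $N\ge5$, $H^2$ does not control $W^{1,4}$ at all), and for $N=3,4$ the power of $\|\veta\|_{H^2}$ is at least $3$, which can never be absorbed by the quadratic dissipation $\io|\Del\veta|^2\dd x$, whatever bootstrap you set up; the scaling $v\mapsto\lambda v$ (quartic left side versus quadratic dissipation) shows this is structural, not a matter of sharper constants. Only for $N=2$ do you even reach the borderline $E_\eta D_\eta$ form, and then only with unavailable smallness.

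The paper avoids all of this by keeping the weight: it invokes the functional inequality (\ref{222.3}), obtained from \cite[Lemma~3.3]{Wi2012} together with elliptic regularity, namely $c_2\io|\na\varphi|^4/\varphi^2\dd x\le\io|\Del\varphi|^2\dd x$ for every positive $\varphi\in C^2(\bom)$ with $\na\varphi\cdot\mathbf{n}=0$ on $\pO$, with $c_2$ independent of $\varphi$ and of the dimension. Note that $\io|\na\varphi|^4/\varphi^2\dd x$ scales like $\io|\Del\varphi|^2\dd x$ under $\varphi\mapsto\lambda\varphi$, which is exactly why such an inequality can hold while the unweighted one cannot. With it, the cross term produces $\tfrac{c_1(T)}{2}\io\ueta|\na\veta|^2/\veta\dd x\le\tfrac{c_2}{2}\io|\na\veta|^4/\veta^2\dd x+\tfrac{c_1^2(T)}{8c_2}\io\ueta^2\dd x$, and the first piece is absorbed \emph{linearly} by half of the $\io|\Del\veta|^2$ dissipation coming from the $\veta$-equation, while the $\ueta^2$ pieces are absorbed pointwise in $s$ by the logistic term thanks to (\ref{h}); the outcome is simply $y_\eta'(t)+(\text{full dissipation})\le c_3(T)|\Om|$ with solution-independent constants, and a single time integration yields both (\ref{222.1}) and (\ref{222.01}) without any continuation argument. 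To repair your proof you need this weighted inequality (or an equivalent substitute) in place of the unweighted Gagliardo--Nirenberg step; the rest of your computation then goes through essentially as in the paper.
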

\proof
  From Assumption (\ref{gamma2}) and (\ref{v_lower}), we know that there exists 
  $c_1(T)>1$ such that
  \be{222.2}
  \veta \ge \frac{1}{c_1(T)}\, , \qquad
  \gamma(\veta) \le c_1(T) \qquad \text{and} \qquad
	\frac{\gamma'^2(\veta)}{\gamma(\veta)} \le \frac{c_1(T)}{\veta}
	\quad \mbox{in } (0,T)\times\Om
	\qquad \mbox{for all } \eta\in (0,1).
  \ee
We furthermore combine \cite[Lemma 3.3]{Wi2012} (with $h(s)=e^{-s}$ and with $e^{-\varphi}$ replaced by $\varphi$) with elliptic regularity theory to find $c_2>0$ fulfilling
  \be{222.3}
	c_2 \io \frac{|\na\varphi|^4}{\varphi^2} \dd x \le \io |\Delta \varphi|^2 \dd x
	\qquad \mbox{for all $\varphi\in C^2(\bom)$ such that $\varphi>0$ in $\bom$ and $\nabla \varphi \cdot \mathbf{n} =0$ on $\partial \Omega$.}
  \ee
  We then define
  \begin{subequations}\label{222.4}
  \be{222.4a}
	\mu(s):=\frac{h_-(s) \ln (s+e)}{s+1},
	\quad s\ge 0,
	\ee
	where
	\be{222.4b}
	h_-(s) := \max\left( -h(s), 0\right), \,\,h_+(s) := h(s) + h_-(s), 
	\quad s\ge 0,
  \ee
   \end{subequations}
  and use (\ref{02eta}) along with Young's inequality and (\ref{222.2}) to see that whenever $\eta\in (0,1)$,
  \bea{222.5}
	& & \hs{-16mm}
	\frac{\dd}{\dd t} \io (\ueta +e) \Big\{ \ln (\ueta+e)-1 \Big\} \dd x
	+ \frac{1}{2} \io \ueta \ln (\ueta+e) |h(\ueta)| \dd x + \io \ueta^2 \dd x \nn\\
	&=& \io \Big\{ \Delta (\ueta\gamma(\veta)) + \ueta h(\ueta) \Big\} \ln (\ueta+e)  \dd x \nn\\
	& & + \frac{1}{2} \io \ueta \ln (\ueta+e) |h(\ueta)| \dd x + \io \ueta^2 \dd x \nn\\
	&=& - \io \gamma(\veta) \frac{|\na\ueta|^2}{\ueta+e} \dd x
	- \io \frac{\ueta}{\ueta+e} \gamma'(\veta) \na\ueta\cdot\na\veta \dd x \nn\\
	& & + \frac{3}{2} \io \ueta\ln (\ueta+e) h_+(\ueta) \dd x
	- \frac{1}{2} \io \ueta(\ueta+1) \mu(\ueta) \dd x
	+ \io \ueta^2 \dd x \nn\\
	&\le& -\frac{1}{2} \io \gamma(\veta) \frac{|\na\ueta|^2}{\ueta+e} \dd x
	+ \frac{1}{2} \io \frac{\ueta^2}{\ueta+e} \frac{\gamma'^2(\veta)}{\gamma(\veta)} |\na\veta|^2 \dd x \nn\\
	& & + \frac{3}{2} \io \ueta\ln (\ueta+e) h_+(\ueta) \dd x
	- \frac{1}{2} \io \ueta(\ueta+1) \mu(\ueta) \dd x
	+ \io \ueta^2 \dd x \nn\\
	&\le& -\frac{1}{2} \io \gamma(\veta) \frac{|\na\ueta|^2}{\ueta+e}\dd x
	+ \frac{c_1(T)}{2} \io \ueta \frac{|\na\veta|^2}{\veta} \dd x \nn\\
	& & + \frac{3}{2} \io \ueta\ln (\ueta+e) h_+(\ueta) \dd x
	- \frac{1}{2} \io \ueta(\ueta+1) \mu(\ueta) \dd x
	+ \io \ueta^2  \dd x \nn\\
	&\le& -\frac{1}{2} \io \gamma(\veta) \frac{|\na\ueta|^2}{\ueta+e} \dd x
	+ \frac{c_2}{2} \io \frac{|\na\veta|^4}{\veta^2} \dd x \nn\\
	& & + \frac{3}{2} \io \ueta\ln (\ueta+e) h_+(\ueta) \dd x
	- \frac{1}{2} \io \ueta(\ueta+1) \mu(\ueta) \dd x \nn\\
	& & + \Big( 1+\frac{c_1^2(T)}{8c_2} \Big) \io \ueta^2 \dd x
	\quad \mbox{for all } t\in (0,T),
  \eea
  as well as
  \bea{222.6}
	\eps \frac{\dd}{\dd t} \io |\na\veta|^2 \dd x
	+ \io |\Delta\veta|^2 \dd x
	+ 2\io |\na\veta|^2 \dd x
	&=& - \io |\Delta\veta|^2 \dd x
	- 2 \io \frac{\ueta}{1+\eta\ueta} \Del\veta \dd x \nn\\
	&\le& \io \ueta^2 \dd x
	\qquad \mbox{for all } t>0.
  \eea
  Since (\ref{222.4}), together with (\ref{h}), ensures that
  \bas
	\frac{3}{2} s \ln (s+e) h_+(s) - \frac{1}{2} s(s+1) \mu(s) + \Big(2+\frac{c_1^2(T)}{8c_2}\Big) s^2 
 \to -\infty
	\qquad \mbox{as } s\to\infty,
  \eas
 and thus there exists $c_3(T)>0$ such that 
  \bas
	\frac{3}{2} s \ln (s+e) h_+(s) - \frac{1}{2} s(s+1) \mu(s) + \Big(2+\frac{c_1^2(T)}{8c_2}\Big) s^2 
 \le c_3(T)
	\qquad \mbox{for all } s\ge 0.
  \eas
  Combining (\ref{222.5}) with (\ref{222.6}) and (\ref{222.3}), we conclude that for
  \bas
	y_\eta(t):= \io (\ueta + e ) \Big( \ln (\ueta+e)-1 \Big)  \dd x +  \eps \io |\na\veta|^2 \dd x,
	\qquad t\in (0,T), \ \eta\in (0,1),
  \eas
  we have
  \bea{222.99}
	& & \hs{-40mm}
	y_\eta'(t)
	+ \frac{1}{2} \io \gamma(\veta) \frac{|\na\ueta|^2}{\ueta+e} \dd x
	+ \frac{1}{4} \io |\Delta\veta|^2 \dd x
	+ \frac{c_2}{4} \io \frac{|\na\veta|^4}{\veta^2} \dd x \nn\\
	& & + \frac{1}{2} \io \ueta \ln (\ueta+e) |h(\ueta)| \dd x + \io \ueta^2  \dd x \nn\\[2mm]
	&\le& c_3(T) |\Om|
	\qquad \mbox{for all } t>0 \mbox{ and } \eta\in (0,1),
  \eea
  which after a time integration shows that
  \bas
	y_\eta(t)\le \io (u^{in}+e) \left\{ \ln (u^{in}+e)-1 \right\}  \dd x +  \eps \|\na v^{in}\|_2^2 +
		c_3(T) T |\Om|
	\qquad \mbox{for all } t\in (0,T) \mbox{ and } \eta\in (0,1),
  \eas
  and thereby implies (\ref{222.01}), while (\ref{222.1}) can be derived by direct integration in (\ref{222.99}).
\qed

\medskip

An immediate consequence of (\ref{222.1}) reveals some integrability feature of expressions related to the fluxes
appearing in the first equation from (\ref{02eta}), here slightly generalized by involving an exponent $\theta$
which can actually be an arbitrary element of $[\frac{1}{2},\infty)$.
\begin{lemma}\label{lem223}
  If (\ref{gamma2}) and (\ref{h}) hold, then for all $\theta\ge\frac{1}{2}$ and each $T>0$,
 there exists $C(\theta,T)>0$ such that
  \be{223.1}
	\int_0^T \io \Big| \nabla \big\{ \ueta \gamma^\theta(\veta) \big\} \Big|^{4/3} \dd x\dd t	\le C(\theta,T),
  \ee
  for all $\eta\in (0,1)$.
\end{lemma}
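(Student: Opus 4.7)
The strategy is to expand the gradient by the product rule and control each of the two resulting terms separately, using the estimates collected in Lemma~\ref{lem222} together with the pointwise bounds $\gamma(\veta)\le c_1(T)$ and $\gamma'^2(\veta)/\gamma(\veta)\le c_1(T)/\veta$ from~\eqref{222.2}. The hypothesis $\theta\ge 1/2$ intervenes precisely to ensure that all the nonnegative powers of $\gamma(\veta)$ that arise in the forthcoming splittings remain uniformly bounded in $T$ and in $\eta$.

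For the first term $\gamma^\theta(\veta)\nabla\ueta$, I would write
$$\bigl(\gamma^\theta(\veta)|\nabla\ueta|\bigr)^{4/3} = \left(\gamma(\veta)\frac{|\nabla\ueta|^2}{\ueta+e}\right)^{2/3} \gamma^{(4\theta-2)/3}(\veta)(\ueta+e)^{2/3},$$
and then apply Hölder's inequality in $(t,x)$ with conjugate exponents $3/2$ and $3$ to obtain
$$\int_0^T\io \bigl(\gamma^\theta(\veta)|\nabla\ueta|\bigr)^{4/3}\dd x\dd t \le \left(\int_0^T\io \gamma(\veta)\frac{|\nabla\ueta|^2}{\ueta+e}\dd x\dd t\right)^{2/3}\left(\int_0^T\io \gamma^{2(2\theta-1)}(\veta)(\ueta+e)^2\dd x\dd t\right)^{1/3}.$$
Since $2(2\theta-1)\ge 0$, the factor $\gamma^{2(2\theta-1)}(\veta)$ is bounded by a constant depending only on $T$, and both integrals on the right-hand side are then finite by~\eqref{222.1} (using in particular $\ueta\in L^2((0,T)\times\Om)$).

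For the second term $\theta\ueta\gamma^{\theta-1}(\veta)\gamma'(\veta)\nabla\veta$, I would use~\eqref{222.2} to write
$$|\ueta\gamma^{\theta-1}(\veta)\gamma'(\veta)\nabla\veta|\le \sqrt{c_1(T)}\,\gamma^{\theta-1/2}(\veta)\,\ueta\,\frac{|\nabla\veta|}{\sqrt{\veta}},$$
and observe that the factor $\gamma^{\theta-1/2}(\veta)$ is again bounded thanks to $\theta\ge 1/2$. It thus suffices to control $\ueta|\nabla\veta|/\sqrt{\veta}$ in $L^{4/3}((0,T)\times\Om)$, which follows from Hölder's inequality with conjugate exponents $2$ and $4$ (noting $3/4=1/2+1/4$), using $\ueta\in L^2((0,T)\times\Om)$ and $|\nabla\veta|^4/\veta^2 \in L^1((0,T)\times\Om)$ furnished by~\eqref{222.1}.

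I do not anticipate any serious obstacle: after the product rule, everything reduces to calibrating the Hölder exponents so that the resulting integrands match exactly the $L^2$- or $L^1$-type quantities already shown to be uniformly bounded in Lemma~\ref{lem222}. The only mild bookkeeping concerns the exponents of $\gamma(\veta)$ in the two splittings, which is precisely where the restriction $\theta\ge 1/2$ is used.
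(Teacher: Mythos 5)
Your proposal is correct and follows essentially the same route as the paper's proof: the same product-rule splitting, the same factorizations matching the quantities $\gamma(\veta)\frac{|\nabla\ueta|^2}{\ueta+e}$, $\ueta^2$ and $\frac{|\nabla\veta|^4}{\veta^2}$ from Lemma~\ref{lem222}, and the same use of \eqref{222.2} with $\theta\ge\frac12$ to tame the powers of $\gamma(\veta)$. The only cosmetic differences are that you invoke H\"older where the paper uses Young's inequality, and that you apply the pointwise bound on $\gamma'(\veta)$ before rather than after the integral estimate.
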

\begin{proof}
  Let $\eta\in (0,1)$ and $T>0$. Then in
  \begin{align*}
	\int_0^T \io \Big| \na \big\{ \ueta \gamma^\theta(\veta) \big\} \Big|^{4/3} \dd x\dd t
	\le \, & 2^{1/3} \int_0^T \io \gamma^{4\theta/3}(\veta) |\na\ueta|^{4/3} \dd x\dd t\\
	& + 2^{1/3} \theta^{4/3} \int_0^T \io 
		\ueta^{4/3} \gamma^{4(\theta-1)/3}(\veta) |\gamma'(\veta)|^{4/3} |\na\veta|^{4/3} \dd x\dd t,
  \end{align*}
  we twice use Young's inequality to estimate
  \bas
	\int_0^T \io \gamma^{4\theta/3}(\veta) |\na\ueta|^{4/3} \dd x\dd t 
	&=& \int_0^T \io \left( \gamma(\veta) \frac{|\na\ueta|^2}{\ueta+e} \right)^{2/3} (\ueta+e)^{2/3} \gamma^{(4\theta-2)/3} (\veta) \dd x\dd t\\
	&\le& \int_0^T \io \gamma(\veta) \frac{|\na\ueta|^2}{\ueta+e}
	+ \int_0^T \io (\ueta+e)^2 \gamma^{4\theta-2}(\veta) \dd x\dd t,
  \eas
  and
  \bas
	& & \hspace*{-20mm}
	\int_0^T \io 
		\ueta^{4/3} \gamma^{4(\theta-1)/3}(\veta) |\gamma'(\veta)|^{4/3} |\na\veta|^{4/3} \dd x \dd t \\
	&=& \int_0^T \io \left( \frac{|\na\veta|^4}{\veta^2} \right)^{1/3}
		 \ueta^{4/3} \gamma^{4(\theta-1)/3}(\veta) |\gamma'(\veta)|^{4/3} \veta^{2/3} \dd x \dd t \\
	&\le& \int_0^T \io \frac{|\na\veta|^4}{\veta^2} \dd x \dd t 
	+ \int_0^T \io \ueta^2 \frac{\veta \gamma'^2(\veta)}{\gamma(\veta)} \gamma^{2\theta-1}(\veta) \dd x \dd t .
  \eas  
  Collecting the above inequalities and using (\ref{222.1}), (\ref{222.2}) and $\theta \ge 1/2$ gives (\ref{223.1}).
\end{proof}

\medskip

For later reference, let us briefly note some basic information on mass control in the two components.
\begin{lemma}\label{lem2211}
  Assume (\ref{gamma2}) and (\ref{h}). Then there exists $C>0$ such that
  \be{2211.2}
	\io \ueta(t) \dd x \le C
	\qquad
	\mbox{and}
	\qquad
	\io \veta(t) \dd x \le C
	\qquad \mbox{for all $t>0$ and } \eta\in (0,1).
  \ee
\end{lemma}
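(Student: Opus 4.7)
The plan is to integrate each PDE in \eqref{02eta} over $\Omega$ and exploit the Neumann boundary conditions together with the strong coercivity provided by the logistic term, as encoded in assumption \eqref{h}.

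For the first bound, I would integrate the $u_\eta$-equation over $\Omega$. Thanks to the no-flux boundary condition $\nabla(u_\eta \gamma(v_\eta))\cdot\mathbf{n}=0$, the diffusion term vanishes and one obtains
\begin{equation*}
\frac{\dd}{\dd t}\int_\Omega u_\eta\ \mathrm{d}x = \int_\Omega u_\eta h(u_\eta)\ \mathrm{d}x.
\end{equation*}
The key observation is that \eqref{h} forces $sh(s)+s\to-\infty$ as $s\to\infty$, since $sh(s)\le -Ms^2/\ln s$ eventually for every $M>0$. Combined with the continuity of $h$ on $[0,\infty)$, this yields a constant $c_1>0$ such that $sh(s)+s\le c_1$ for every $s\ge 0$. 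Consequently,
\begin{equation*}
\frac{\dd}{\dd t}\int_\Omega u_\eta\ \mathrm{d}x + \int_\Omega u_\eta\ \mathrm{d}x \le c_1|\Omega|,
\end{equation*}
and Gronwall's lemma yields $\|u_\eta(t)\|_1 \le \max\{\|u^{in}\|_1, c_1|\Omega|\}$ for all $t>0$ and $\eta\in(0,1)$.

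For the second bound, I would integrate the $v_\eta$-equation over $\Omega$. The Neumann condition on $v_\eta$ kills the Laplacian, and the elementary estimate $u_\eta/(1+\eta u_\eta)\le u_\eta$ gives
\begin{equation*}
\varepsilon \frac{\dd}{\dd t}\int_\Omega v_\eta\ \mathrm{d}x + \int_\Omega v_\eta\ \mathrm{d}x = \int_\Omega \frac{u_\eta}{1+\eta u_\eta}\ \mathrm{d}x \le \|u_\eta(t)\|_1,
\end{equation*}
so that the first step provides the right-hand side with an $\eta$- and $t$-uniform bound. A second application of Gronwall's lemma then yields $\|v_\eta(t)\|_1 \le \max\{\|v^{in}\|_1,\|u_\eta\|_{L^\infty(0,\infty;L^1)}\}$, completing the proof.

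The argument is essentially routine; the only genuine point is the translation of the asymptotic condition \eqref{h} into the pointwise coercive bound $sh(s)+s\le c_1$, which is what allows one to produce an absorbing differential inequality for $\|u_\eta\|_1$ rather than merely a locally-in-time estimate. No sign difficulty is expected since both $u_\eta$ and $v_\eta$ remain nonnegative by construction of the approximate classical solutions.
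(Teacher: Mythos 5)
Your proposal is correct and follows essentially the same route as the paper: integrate both equations over $\Omega$, use \eqref{h} to bound the logistic contribution, and close with an ODE comparison, then feed the resulting bound on $\|u_\eta\|_1$ into the $v_\eta$-equation via $u_\eta/(1+\eta u_\eta)\le u_\eta$. The only cosmetic difference is that the paper bounds $\int_\Omega u_\eta(1+h(u_\eta))\dd x$ by restricting to the set where $u_\eta\le s_1$ (with $h\le -1$ beyond $s_1$), whereas you bound $\sup_{s\ge0}\,s(1+h(s))$ directly; both yield the same absorbing differential inequality.
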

\begin{proof}
  Since (\ref{h}) particularly entails the existence of $s_1>0$ such that $h(s)\le -1$ for all $s>s_1$, it follows from \eqref{02eta} that
  \bas
	\frac{\dd}{\dd t} \io \ueta \dd x + \io \ueta \dd x 
	&=& \io \ueta \left( 1+ h(\ueta) \right) \dd x 
	\le \io \mathbf{1}_{(0,s_1)}(\ueta) \, \ueta \left( 1+ h(\ueta) \right) \dd x \\
	&\le& s_1 |\Omega| \left( 1 + \sup_{(0,s_1)}|h| \right)
	\qquad \mbox{for all $t>0$ and } \eta\in (0,1),
  \eas
  and that thus, by a simple comparison argument,
  \bas
	\io \ueta (t) \dd x 
	\le c_4:= \max\left\{ \io u^{in} \dd x \, , \, s_1 |\Omega| \left( 1 + \sup_{(0,s_1)}|h| \right) \right\}
	\qquad \mbox{for all $t>0$ and } \eta\in (0,1).
  \eas
From the second equation in (\ref{02eta}) we therefore obtain that
  \bas
	\eps \frac{\dd}{\dd t} \io \veta \dd x = - \io \veta \dd x + \io \frac{\ueta}{1+\eta\ueta} \dd x 
	\le - \io \veta \dd x + c_4
	\qquad \mbox{for all $t>0$ and } \eta\in (0,1),
  \eas
  and a simple time integration completes the proof.
\end{proof}

\medskip

Now the gradient bound from Lemma \ref{lem223} can be supplemented by a time regularity feature:
\begin{lemma}\label{lem224}
  Suppose that (\ref{gamma2}) and (\ref{h}) hold, and 
  let $p> \max(N,4)$. Then for all $T>0$ there exists $C(p,T)>0$ such that
  \be{224.1}
	\int_0^T \Big\| \pa_t \big\{ \ueta(t) \gamma(\veta(t))\big\}\Big\|_{(W^{1,p}(\Om))'} \dd t \le C(p,T),
  \ee
  for all $\eta\in (0,1)$.
\end{lemma}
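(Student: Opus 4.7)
The plan is to proceed by duality: test $\partial_t\{u_\eta \gamma(v_\eta)\}$ against an arbitrary $\varphi\in W^{1,p}(\Omega)$ with $\|\varphi\|_{W^{1,p}}\le 1$. Since $p>N$, the Sobolev embedding yields $\|\varphi\|_\infty\le C(p)$. Applying the product rule, substituting the two equations in \eqref{02eta}, and integrating by parts in the diffusion term (using the Neumann boundary condition $\nabla(u_\eta\gamma(v_\eta))\cdot\mathbf{n}=0$), I would write
\begin{align*}
\io \varphi \, \partial_t\{u_\eta\gamma(v_\eta)\}\dd x
= & - \io \gamma(v_\eta)\,\nabla\varphi\cdot\nabla(u_\eta\gamma(v_\eta))\dd x \\
& - \io \varphi\,\gamma'(v_\eta)\,\nabla v_\eta \cdot\nabla(u_\eta\gamma(v_\eta))\dd x \\
& + \io \varphi\,\gamma(v_\eta)\,u_\eta h(u_\eta)\dd x
+ \io \varphi\, u_\eta\gamma'(v_\eta)\,\partial_t v_\eta \dd x,
\end{align*}
and bound each of the four terms in $L^1(0,T)$ using the estimates of Lemma~\ref{lem222} and Lemma~\ref{lem223}.

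Before estimating, I would record the pointwise bounds on $\gamma(v_\eta)$ and $\gamma'(v_\eta)$ on $(0,T)\times\Omega$: from \eqref{v_lower} and \eqref{gamma2}, one has $v_\eta\ge 1/c_1(T)$, hence $\gamma(v_\eta)\le C(T)$ and $|\gamma'(v_\eta)|^2 \le C(T)\,\gamma(v_\eta)/v_\eta \le C(T)$. I would also derive $\nabla v_\eta\in L^4((0,T)\times\Omega)$ from $|\nabla v_\eta|^4/v_\eta^2\in L^1$ via the lower bound on $v_\eta$, and $\partial_t v_\eta\in L^2((0,T)\times\Omega)$ by using the $v_\eta$-equation together with $\Delta v_\eta, v_\eta, u_\eta \in L^2((0,T)\times\Omega)$.

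Now the four terms: for the first, Hölder with conjugate exponents $(p,p')$ and the fact that $p'=p/(p-1)<4/3$ (which is why we need $p>4$) allow one to pair $\|\nabla\varphi\|_p$ with $\|\nabla(u_\eta\gamma(v_\eta))\|_{p'} \le C\|\nabla(u_\eta\gamma(v_\eta))\|_{4/3}$, then Lemma~\ref{lem223} with $\theta=1$ and Hölder in time yield an $L^1(0,T)$ bound. The third term is straightforward using $\|\varphi\|_\infty\le C$, $\gamma(v_\eta)\le C(T)$ and $u_\eta h(u_\eta)\in L^1((0,T)\times\Omega)$ (from \eqref{222.1}, since $u_\eta|h(u_\eta)|\le u_\eta \ln(u_\eta+e)|h(u_\eta)|$). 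The fourth term similarly reduces to $\int_0^T \|u_\eta\|_2\|\partial_t v_\eta\|_2\dd t$, which is controlled by Cauchy-Schwarz in time.

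The most delicate piece is the second (quadratic) term: here I would bound $|\gamma'(v_\eta)|$ by a constant and then use $\|\varphi\|_\infty\le C$ to reduce to controlling $\int_0^T\!\!\io |\nabla v_\eta|\,|\nabla(u_\eta\gamma(v_\eta))|\dd x\dd t$. Hölder in space-time with exponents $(4,4/3)$ precisely pairs the $L^4$ bound on $\nabla v_\eta$ (furnished by \eqref{222.1}) with the $L^{4/3}$ bound on $\nabla(u_\eta\gamma(v_\eta))$ (furnished by Lemma~\ref{lem223}). This term is the genuine obstacle, because both integrabilities are tight and it is precisely the balance $\frac14+\frac34=1$ that makes the argument close; this is also the reason the target exponent in \eqref{224.1} cannot be taken better than $1$ in time.
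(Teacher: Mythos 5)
Your decomposition and your treatment of the first, third and fourth terms essentially reproduce the paper's argument (the paper substitutes the $v_\eta$-equation for $\partial_t v_\eta$ inside a source term $\rho_\eta$ rather than invoking a uniform $L^2((0,T)\times\Omega)$ bound on $\partial_t v_\eta$, but these are equivalent here). The genuine gap is in the quadratic term. You discard $\gamma'(v_\eta)$ via $|\gamma'(v_\eta)|\le C(T)$ and then claim $\nabla v_\eta\in L^4((0,T)\times\Omega)$ ``from $|\nabla v_\eta|^4/v_\eta^2\in L^1$ via the lower bound on $v_\eta$''. This goes in the wrong direction: the lower bound $v_\eta\ge 1/c_1(T)$ only gives $|\nabla v_\eta|^4/v_\eta^2\le c_1^2(T)\,|\nabla v_\eta|^4$, i.e.\ it controls the weighted quantity by the unweighted one; to deduce the unweighted $L^4$ bound from \eqref{222.1} you would need a uniform \emph{upper} bound on $v_\eta$, which is not available (uniformly in $\eta$ one only has $v_\eta\in L^\infty((0,T);H^1(\Om))\cap L^2((0,T);H^2(\Om))$, and for $N\ge 3$ no unweighted space-time $L^4$ bound on $\nabla v_\eta$ follows from these by interpolation). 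So, as written, the H\"older pairing $(4,4/3)$ in your second term is not justified.

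The repair is precisely where the structural hypothesis \eqref{gamma2} must enter, and it is what the paper does: do not bound $|\gamma'(v_\eta)|$ by a constant, but keep it attached to $|\nabla v_\eta|$. From \eqref{gamma2} and \eqref{v_lower} one has $\gamma'^2(v_\eta)/\gamma(v_\eta)\le C(T)/v_\eta$ and $\gamma(v_\eta)\le C(T)$ on $(0,T)\times\Om$, hence $|\gamma'(v_\eta)|^4\le C(T)/v_\eta^2$, so that Young's inequality with exponents $4/3$ and $4$ gives
\begin{equation*}
	\io |\gamma'(\veta)|\,\Big|\na\big\{\ueta\gamma(\veta)\big\}\Big|\,|\na\veta|\dd x
	\le \io \Big|\na\big\{\ueta\gamma(\veta)\big\}\Big|^{4/3}\dd x
	+ C(T)\io \frac{|\na\veta|^4}{\veta^2}\dd x\,,
\end{equation*}
and both right-hand terms are exactly the quantities controlled by Lemma~\ref{lem223} (with $\theta=1$) and by \eqref{222.1}. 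With this one-line change (and noting that the uniform $L^2((0,T)\times\Om)$ bound on $v_\eta$ needed for your $\partial_t v_\eta$ estimate follows from \eqref{222.01}, Lemma~\ref{lem2211} and the Poincar\'e--Wirtinger inequality), your proof closes and coincides in substance with the paper's.
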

\begin{proof}
  Since $p>N$ and $p\ge 4$, we can pick $c_5>0$ such that $\|\psi\|_\infty + \|\na\psi\|_4 \le c_5$
  for all $\psi\in C^1(\bom)$ fulfilling $\|\psi\|_{W^{1,p}}\le 1$.
  Fixing any such $\psi$, from (\ref{02eta}) we obtain that for all $t>0$ and $\eta\in (0,1)$,
  \bea{224.2}
	\hs{-4mm}
	\io \pa_t \big\{ \ueta\gamma(\veta)\big\} \, \psi \dd x 
	= - \io \gamma'(\veta) \Big( \na \big\{ \ueta\gamma(\veta)\big\}\cdot\na\veta \Big) \, \psi \dd x 
	- \io \gamma(\veta) \na \big\{\ueta\gamma(\veta)\big\}\cdot\na\psi \dd x 
	+ \io \rho_\eta\psi \dd x ,
  \eea
  where
  \be{224.3}
	\rho_\eta:=\ueta h(\ueta)\gamma(\veta)
	+ \frac1\eps \ueta\gamma'(\veta)\Del\veta
	- \frac1\eps \ueta\veta\gamma'(\veta)
	+ \frac1\eps \frac{\ueta^2}{1+\eta\ueta} \gamma'(\veta).
  \ee
  Now given $T>0$, once more drawing on (\ref{222.2}), we have
 \be{224.5}
	|\gamma'(\veta)|\le c_6(T):=c_1^{3/2}(T)
	\quad \mbox{in } (0,T)\times\Om
	\qquad \mbox{for all } \eta\in (0,1).
  \ee
  According to our choice of $c_5$ and H\"older and Young's inequalities, in (\ref{224.2}) we can therefore estimate
  \bea{224.6}
	\hs{-6mm}
	\bigg| - \io \gamma'(\veta) \Big( \na \big\{ \ueta\gamma(\veta)\big\}\cdot\na\veta \Big) \, \psi \dd x \bigg|
	&\le& c_5 \io |\gamma'(\veta)| \, \Big| \na \big\{ \ueta\gamma(\veta)\big\} \Big| \, |\na\veta| \dd x \nn\\
	&\le& c_5 \io \Big| \na \big\{ \ueta\gamma(\veta)\big\} \Big|^{4/3} \dd x 
	+ c_5 \io \left|\gamma'(\veta)\right|^4 |\na\veta|^4 \dd x \nn\\
	&\le& c_5 \io \Big| \na \big\{ \ueta\gamma(\veta)\big\} \Big|^{4/3} \dd x 
	+ c_5 c_1^4(T) \io \frac{|\na\veta|^4}{\veta^2} \dd x ,
  \eea
  and 
  \bea{224.7}
	\bigg| - \io \gamma(\veta) \na \big\{\ueta\gamma(\veta)\big\}\cdot\na\psi \dd x \bigg|
	&\le& c_5 c_1(T) \bigg\{ \io \Big| \na \big\{ \ueta\gamma(\veta)\big\} \Big|^{4/3} \dd x \bigg\}^{3/4} \nn\\
	&\le& c_5 c_1(T) \io \Big| \na \big\{ \ueta\gamma(\veta)\big\}\Big|^{4/3} \dd x + c_5 c_1(T),
  \eea
  as well as
  \bea{224.8}
	\bigg| \io \rho_\eta \psi \dd x \bigg|
	\le c_5 \io |\rho_\eta| \dd x ,
  \eea
  for all $t\in (0,T)$ and $\eta\in (0,1)$,
  where by (\ref{222.2}), (\ref{224.3}), (\ref{224.5}) and Young's inequality,
  \bas
	\io |\rho_\eta| \dd x 
	&\le& c_1(T) \io \ueta |h(\ueta)| \dd x 
	+\frac{c_6(T)}\eps  \io \ueta |\Del\veta| \dd x 
	+ \frac{c_1(T)}\eps  \io \ueta \sqrt{\veta} \dd x  
	+ \frac{c_6(T)}\eps  \io \ueta^2 \dd x \\
	&\le& c_1(T) \io \ueta\ln (\ueta+e) |h(\ueta)| \dd x 
	+ \io |\Del\veta|^2 \dd x 
	+ \io \veta \dd x 
	+ \Big(\frac{c_6^2(T)}{4 \eps^2} + \frac{c_1^2(T)}{4\eps^2} + \frac{c_6(T)}{\eps}\Big) \io \ueta^2 \dd x 
  \eas
  for all $t\in (0,T)$ and $\eta\in (0,1)$.
  Together with (\ref{224.6})-(\ref{224.8}) inserted into (\ref{224.2}), this shows that with some $c_7(T)>0$,
  for all $t\in (0,T)$ and $\eta\in (0,1)$, we have
  \bas
	\Big\| \pa_t \big\{ \ueta \gamma(\veta)\big\} \Big\|_{(W^{1,p}(\Om))'} 
	&\le& c_7(T) \bigg\{ \io \Big|\na \big\{\ueta\gamma(\veta)\big\} \Big|^{4/3} \dd x 
	+ \io \frac{|\na\veta|^4}{\veta^2} \dd x 
	+ \io |\Del\veta|^2 \dd x 
	+ \io \veta \dd x \nn\\
	& & \hs{20mm}
	+ \io \ueta\ln (\ueta+e) |h(\ueta)| \dd x 
	+ \io \ueta^2 \dd x + 1 \bigg\},
  \eas
  so that (\ref{224.1}) results upon an integration in time using Lemma~\ref{lem222} and Lemma~\ref{lem2211},
  and applying Lemma~\ref{lem223} to $\theta=1$.
\end{proof}

\medskip

The derivation of our main result in this section thereby reduces to an application of an Aubin-Lions lemma:
\begin{proof}[Proof of Theorem \ref{theo262b}]
  From Lemma~\ref{lem223} (with $\theta=1$), Lemma~\ref{lem222} and (\ref{222.2}), it follows that
  \bas
	\Big(\ueta\gamma(\veta)\Big)_{\eta\in (0,1)}
	\mbox{ is bounded in } L^{4/3}((0,T);W^{1,{4/3}}(\Om))
	\qquad \mbox{for all } T>0,
  \eas
  while Lemma \ref{lem224} asserts that if we fix $p> \max(N,4)$, then
  \bas
	\Big(\pa_t \big\{ \ueta\gamma(\veta) \big\} \Big)_{\eta\in (0,1)}
	\mbox{ is bounded in } L^1((0,T);(W^{1,p}(\Om))')
	\qquad \mbox{for all } T>0.
  \eas
  Apart from that, Lemma~\ref{lem222} in conjunction with Lemma~\ref{lem2211} warrants that
  \bas
	(\veta)_{\eta\in (0,1)}
	\mbox{ is bounded in } L^2((0,T);H^2(\Om))
	\qquad \mbox{for all } T>0,
  \eas
  whereas in view of (\ref{02eta}) it is obvious that Lemma~\ref{lem222} (with Lemma~\ref{lem2211}) moreover entails that
  \bas
	(\pa_t \veta)_{\eta\in (0,1)}
	\mbox{ is bounded in } L^2((0,T);L^2(\Om))
	\qquad \mbox{for all } T>0.
  \eas
  Owing to the compactness of the embeddings of $W^{1,4/3}(\Omega)$ and $H^2(\Omega)$, respectively, in $L^{4/3}(\Omega)$ and $H^1(\Omega)$, respectively, and the continuity of the embeddings of $L^{4/3}(\Omega)$ and $H^1(\Omega)$, respectively, in $(W^{1,p}(\Omega))'$ and $L^2(\Omega)$, respectively, two applications of the Aubin-Lions-Simon lemma \cite[Corollary~4]{simon1986compact} thus provide $(\eta_j)_{j\in\N}\subset (0,1)$ such that $\eta_j\searrow 0$ as
  $j\to\infty$, and also provide that for all $T>0$,
  \begin{eqnarray}
	& & u_{\eta_j}\gamma(v_{\eta_j}) \mathop{\to}_{j\to\infty} z
	\qquad \mbox{a.e.~in } (0,\infty)\times\Om \quad \mbox{and in } L^{4/3}((0,T)\times\Omega), 
	\label{262.2} \\
	& & \na\big\{ u_{\eta_j}\gamma(v_{\eta_j})\big\} \mathop{\wto}_{j\to\infty} \na z
	\qquad \mbox{in } L^{4/3}((0,T)\times\Omega;\R^N) \,w
	\qquad \mbox{and}
	\label{262.3} \\
	& & v_{\eta_j} \mathop{\to}_{j\to\infty} v
	\qquad \mbox{in } L^2((0,T);H^1(\Om))
	\mbox{ and a.e.~in } (0,\infty)\times\Om
	\label{262.5} 
  \end{eqnarray}
  for some nonnegative $z\in L^{4/3}((0,T);W^{1,{4/3}}(\Om))$, and some
	$v\in L^2((0,T);H^{2}(\Om)) $
  which satisfies $v>0$ a.e.~in $(0,\infty)\times\Om$, according to (\ref{262.5})
  and (\ref{v_lower}).\\
  Now from (\ref{v_lower}), (\ref{262.2}), (\ref{262.5}) and the positivity of $\gamma$, it is evident that
  \begin{equation}\label{Baudelaire}
  u_{\eta_j}\mathop{\to}_{j\to\infty} u:=\frac{z}{\gamma(v)} \qquad \mbox{a.e.~in }  (0,\infty)\times\Om,
  \end{equation}
so that since 
  \bas
	(\ueta)_{\eta\in (0,1)}
	\mbox{ is bounded in } L^2((0,T)\times\Om)
	\qquad \mbox{for all } T>0,
  \eas
  by Lemma~\ref{lem222},   we infer that 
  \be{262.78}
	u\in L^2((0,T)\times\Om),
  \ee
  and that thanks to the Vitali convergence theorem,  
  \be{262.6}
	u_{\eta_j} \mathop{\to}_{j\to\infty}  u
	\quad \mbox{in } L^1((0,T)\times\Omega)
	\qquad \mbox{for all } T>0.
  \ee
Similarly, the $L^1$ estimate for $\big(\ueta\ln (\ueta+e)h(\ueta)\big)_{\eta\in (0,1)}$
  contained in (\ref{222.1}) can readily be seen to entail that
  \bas
	\big(\ueta h(\ueta)\big)_{\eta\in (0,1)}
	\mbox{ is uniformly integrable over $(0,T)\times\Om$ \qquad for all $T>0$,}
  \eas
while the continuity of $h$ and \eqref{Baudelaire} guarantees that
  \begin{equation*}
	u_{\eta_j} h(u_{\eta_j}) \mathop{\to}_{j\to\infty} u h(u) \qquad \mbox{a.e.~in }  (0,\infty)\times\Om.
  \end{equation*}
Another application of the Vitali convergence theorem then gives
    \be{262.7}
	u_{\eta_j} h(u_{\eta_j}) \mathop{\to}_{j\to\infty} u h(u)
	\quad \mbox{in } L^1((0,T)\times\Omega).
  \ee
  
  With the regularity requirements in Definition~\ref{dw3} hence being asserted in view of (\ref{262.2}), (\ref{262.3}), (\ref{262.78}) and (\ref{262.7}),
  the derivation of the identities in (\ref{wu3}) and (\ref{wv3}) can be achieved by taking $\eta=\eta_j\searrow 0$ in the corresponding weak formulation of (\ref{02eta}) and using the convergence properties in (\ref{262.3}), (\ref{262.5}), (\ref{262.6}) and (\ref{262.7}).
  To finally verify the claimed additional regularity features, we observe that (\ref{262.1}) follows from $v \in L^2(H^2)$ and (\ref{262.78}) upon observing that the inclusions
  $u\in L^\infty((0,T);L\log L(\Om))$ and $v\in L^\infty((0,T); H^1(\Om))$ for all $T>0$
  are immediate consequences of (\ref{222.01}).	
  The coupled weak differentiability property in (\ref{262.11}) can be concluded from Lemma \ref{lem223} when applied to $\theta=\frac{1}{2}$
  and combined with the fact that $u_{\eta_j}\sqrt{\gamma(v_{\eta_j})} \to u\sqrt{\gamma(v)}$ a.e.~in $(0,\infty)\times \Om$ as
  $j\to \infty$, the latter resulting from (\ref{262.5}), (\ref{Baudelaire}) and the continuity of $\gamma$.
\end{proof}

%%%%%%%%%%%%%%%%
%%%%%%%%%%%%%%%%
\section*{Acknowledgments}
%%%%%%%%%%%%%%%%
%%%%%%%%%%%%%%%%
The fourth author acknowledges support of the {\em Deutsche For\-schungs\-gemein\-schaft} in the context of the project 
  {\em Emergence of structures and advantages in cross-diffusion systems} (No.~411007140, GZ: WI 3707/5-1).
%%%%%%%%%%%%%%%%
%%%%%%%%%%%%%%%%
\bibliographystyle{siam}
\bibliography{CLSWS}
%%%%%%%%%%%%%%%%
%%%%%%%%%%%%%%%%

\end{document}